\documentclass{amsart}
\usepackage{amssymb,amsmath,amsfonts,amsthm,mathrsfs}
\usepackage{enumerate}

\theoremstyle{plain}
\newtheorem{theorem}{Theorem}[section]
\newtheorem{proposition}[theorem]{Proposition}
\newtheorem{lemma}[theorem]{Lemma}

\theoremstyle{definition}
\newtheorem{definition}[theorem]{Definition}
\newtheorem{assumption}[theorem]{Assumption}

\theoremstyle{remark}
\newtheorem{remark}[theorem]{Remark}

\newcommand{\C}{\mathbb{C}}
\newcommand{\R}{\mathbb{R}}
\newcommand{\N}{\mathbb{N}}

\newcommand{\norm}{|\!|\!|}
\renewcommand{\tilde}[1]{\widetilde{#1}}
\renewcommand{\bar}[1]{\overline{#1}}

\newcommand{\tr}{\mathop{\text{\upshape{tr}}}}

\numberwithin{equation}{section}

\begin{document}
\keywords{Generalized Sobolev space, parameter-ellipticity, existence, uniqueness, eigenvalue asymptotics}

\subjclass{35J40,46E35}\title[Elliptic boundary problem]{An elliptic boundary problem acting on generalized
Sobolev spaces}

\author{R. Denk}
\address{Department of Mathematics and Statistics \\
University of Konstanz \\
D-78457 Konstanz \\
Germany}
\email{robert.denk@uni-konstanz.de}

\author{M.Faierman}
\address{School of Mathematics and Statistics \\
The University of New South Wales \\
UNSW Sydney, NSW 2052 \\
Australia}
\email{m.faiermnan@unsw.edu.au}

\begin{abstract}
We consider an elliptic boundary problem over a bounded region $\Omega$ in $\mathbb{R}^n$ and
acting on the generalized Sobolev  space $W^{0,\chi}_p(\Omega)$ for  $1 < p < \infty$. We note that
similar problems for $\Omega$ either a bounded region in $\mathbb{R}^n$ or a closed manifold
 acting on $W^{0,\chi}_2(\Omega)$, called H\"{o}rmander space, have been the subject of investigation
 by various authors. Then in this paper we will, under the assumption of parameter-ellipticity,
 establish results pertaining to the existence and uniqueness of solutions of the boundary problem.
 Furthermore, under the further assumption that the boundary conditions are null, we will establish results
 pertaining to the spectral properties of the Banach space operator induced by the boundary problem,
 and in particular, to the angular and asymp\-totic distribution of its eigenvalues.
 \end{abstract}

\maketitle

\section{introduction} \label{S:1}
In the latter half of the last century H\"{o}rmander \cite[Chapter II]{H} introduced a class of weight
functions defined on $\mathbb{R}^n$, which he denoted by $\mathcal{K}$ (see Definition \ref{D:2.1} below),
and a Banach space $\mathcal{B}_{p,k}, k \in \mathcal{K}, 1 < p < \infty$, composed of tempered distributions
$u$ such that $\mathscr Fu$ is a measurable function on $\mathbb{R}^n$ and $k\mathscr Fu \in L_p(\mathbb{R}^n)$, where $\mathscr F$ denotes
the Fourier transformation in $\mathbb{R}^n$. He then investigated various properties of this space as well as
the regularity properties of solutions of partial differential equations acting on $\mathcal{B}_{p,k}$. We
might mention at this point  that the space $\mathcal{B}_{2,k}$, called H\"{o}rmander space, is of particular
importance as it gives us a significant generalization of the classical Sobolev space based on $L_2(\mathbb{R}^n)$.

The work of H\"{o}rmander did stimulate significant interest and research at that time, but unlike Sobolev spaces,
the H\"{o}rmander spaces were not widely applied to elliptic boundary problems and to elliptic operators  acting over
closed manifolds. However since the beginning of this century significant investigations have been devoted to these
aforementioned problems (see for example \cite{MM1}, \cite{MM2}, and \cite{AK} as well as the book \cite{MM3}).
Indeed, in the references just cited  the authors restrict themselves to the case $p = 2$ and to a certain subset
of weight functions called interpolation parameters which ensures that every H\"{o}rmander space  based on an
interpolation parameter is actually an interpolation space obtained by interpolating between two Sobolev spaces.
Thus in this way that authors obtain important results pertaining to elliptic boundary problems and to elliptic
operators acting on such H\"{o}rmander spaces defined on closed manifolds.

Shortly after the appearance of the book \cite{H} there appeared the paper of Volevich  and Paneyakh \cite{VP}
presenting, by means of an H\"{o}rmander type weight function, a generalization of Bessel-potential spaces
for $1 < p < \infty$ and then described various properties of this space. This space, which they denote by
$H^{\mu}_p$, is precisely the space of tempered distributions  $u$ such that $\mathscr F^{-1}\,\mu\,\mathscr Fu \in L_p(\mathbb{R}^n)$
 for all $\mu$ belonging to a certain subset, denoted by $\mathcal {K}_0$, of non-vanishing functions in
 $C^{\infty}(\mathbb{R}^n)$ which, together with their inverses, belong to the H\"{o}rmander class of weight
 functions $\mathcal{K}$ and which are multipliers on the Schwartz space $\mathscr S(\mathbb{R}^n)$, that is, as operators
 of multiplication, they map $\mathscr S(\mathbb{R}^n)$ into itself. By defining $\mu\,u(\phi) = u(\mu\,\phi)$ for
 $\mu \in \mathcal{K}_0, \phi \in \mathscr S(\mathbb{R}^n)$  and $u \in \mathscr S^{\prime}(\mathbb{R}^n)$, the
 members of $\mathcal{K}_0$  also become multipliers on the space $\mathscr S^{\prime}(\mathbb{R}^n)$. Lastly, let us
   mention that the spaces obtained by restricting of the members of $H^{\mu}_p$ to subsets of $\mathbb{R}^n$ are also   discussed in \cite{VP}.

  We have mentioned above that $\mathcal{B}_{2,k}$ gives us a generalization of Sobolev spaces  based on $L_2(\mathbb{R}^n)$.
  Motivated by the works cited above, our aim in this paper is to remove the restriction $p = 2$, and by fixing our attention
  upon a certain class of weight functions in $\mathcal{K}$, introduce our generalization of classical Sobolev space based on
  $L_p(\mathbb{R}^n)$, $1   < p <\infty$, as well as on $L_p(G)$ for certain subsets $G$ of $\mathbb{R}^n$. Then we will
  establish various results  pertaining to the operator acting on our generalized Sobolev space  induced by a parameter-elliptic boundary problem.

  Accordingly, we will be concerned here with the boundary problem
\begin{align} \label{E:1.1}
 A(x,D)u(x) - \lambda\,u(x) & = f(x) \; \textrm{for} \; x \in \Omega. \\
 B_j(x,D)u(x) & = g_j(x) \; \textrm{for} \; x \in \Gamma, j = 1,\ldots,m,  \label{E:1.2}
\end{align}
where $\Omega$ is a bounded region in $\mathbb{R}^n$, $n \geq 2$, with boundary $\Gamma$, $A(x,D) = \linebreak
\sum_{|\alpha| \leq  2m} a_{\alpha}(x)D^{\alpha}$
is a linear partial differential operator defined on $\Omega$ of order $2m$, and for $j = 1,\ldots,m$, $ B_j(x,D) = \sum_{|\alpha| \leq m_j}b_{j,\alpha}(x)D^{\alpha}$
is a linear partial differential operator defined on $\Gamma$ of order $m_j < 2m$, while $\lambda \in \mathcal{L}$,
where $\mathcal{L}$ is a closed sector in the complex plane with vertex at the origin.
Our assumptions concerning the boundary
problem \eqref{E:1.1}, \eqref{E:1.2} will be made precise in Section \ref{S:3}.

In Section \ref{S:2} we make precise our definition of the generalized Sobolev space over $\mathbb{R}^n$
and over certain subsets of $\mathbb{R}^n$. This is achieved by firstly defining the subsets
$\mathcal{K}_0$ and $\mathcal{K}_1$ of the H\"{o}rmander class of weight functions $\mathcal{K}$
which will be used in this paper to define the generalized Sobolev spaces with which we will be concerned.
Then for $\chi \in \mathcal{K}_0 \cup \mathcal{K}_1$ we introduce the space $H^{\chi}_p(\mathbb{R}^n)$,
which is a generalization of $L_p(\mathbb{R}^n)$, and describe various properties of this space.
And it is by means of $H^{\chi}_p(\mathbb{R}^n)$ that we are able to introduce the generalized
Sobolev spaces $W^{k,\chi}_p(\mathbb{R}^n),W^{k,\chi}_p(\Omega)$           for $k \in \mathbb{N} \cup \{\,0\,\}$,
and $W^{k-1/p,\chi}_p (\Gamma)$ for $ k \in \mathbb{N}$ (see Definitions \ref{D:2.1}, \ref{D:2.3},
\ref{D:2.8}, \ref{D:2.11}, and \ref{D:2.14}).

In Section \ref{S:3} we make precise our assumptions concerning the boundary problem \eqref{E:1.1}, \eqref{E:1.2}
and then use the results of Section \ref{S:2} to establish our main result concerning
the existence and uniqueness of solutions of this boundary problem (see Theorem \ref{T:3.10} below).

Finally in Section \ref{S:4} we let $A^{\chi}_{B,p}$ denote the Banach space operator, with domain $W^{2m,\chi}_p(\Omega)$,
induced by the boundary problem \eqref{E:1.1}, \eqref{E:1.2} under null boundary conditions. We then prove that
$A^{\chi}_{B,p}$ has compact resolvent and various results are established concerning the completeness
of its principal vectors in $W^{0,\chi}_p(\Omega)$ as well as the angular and asymptotic behaviour of
its eigenvalues (see Theorems \ref{T:4.4}--\ref{T:4.6}).

  \section{Generalized Sobolev space} \label{S:2}
 In this section we are going to introduce our generalization of the classical Sobolev space
 and discuss some of its properties. To this end we need the following terminology.

 Accordingly, we let $x =(x_1,\ldots,x_n) = (x^{\prime},x_n)$ denote a generic point in $\mathbb{R}^n$ and use the notation
 $D_j  = -i\partial/\partial\,x_j, D = (D_1,\ldots,D_n), D^{\alpha}  = D_1^{\alpha_1}\cdots\,D_n^{\alpha_n} = D^{\prime\,\alpha^{\prime}}D_n^{\alpha_n}$, and
 $\xi^{\alpha} = \xi^{\alpha_1}\cdot\ldots\cdot\xi_n^{\alpha_n}$ for $\xi =(\xi_1,\ldots,\xi_n) = (\xi^{\prime},\xi_n) \in \mathbb{R}^n$,
 where $\alpha = (\alpha_1,\ldots,\alpha_n) = (\alpha^{\prime},\alpha_n)$ is a multi-index whose length $\sum_{j=1}^n\alpha_j$ is denoted
 by $|\alpha|$. Differentiation with respect to another variable, say $y \in \mathbb{R}^n$, instead of $x$ will be indicated by replacing
 $D$ and $D^{\alpha}$ by $D_y$ and $D^{\alpha}_y$, respectively.
 We also let $\mathscr S(\mathbb{R}^n)$  denote the Schwartz space of rapidly decreasing functions
on $\mathbb{R}^n$ and let $\mathscr S^{\prime}(\mathbb{R}^n)$ denote its dual, where in this paper it will always be supposed that $\mathscr S^{\prime}(\mathbb{R}^n)$ is equipped
with its weak-$*$-topology.  In addition we let $\langle\xi\rangle = (1+|\xi|^2)^{1/2}$ and $\langle\xi^{\prime}\rangle = (1+|\xi^{\prime}|^2)^{1/2}$
for $\xi = (\xi^{\prime},\xi_n) \in \mathbb{R}^n$, while for $1 < p < \infty, \,0 \leq s < \infty$, and $G$ an open set in $\mathbb{R}^n$, we let
$W^s_p(G)$ denote the Sobolev space of order $s$ related to $L_p(G)$  and denote the norm in this space by $\Vert\cdot\Vert_{s,p,G}$
(see \cite[p.169, p.310, and Theorem 2.3.3, p.177]{T}). Furthermore, we will use norms depending upon the parameter $\lambda \in \mathbb{C} \setminus\{\,0\,\}$,
namely for $k \in \mathbb{N}_0 =         \mathbb{N}\cup \{\,0\,\}$ with $ k \leq 2m$, we let
\begin{equation*}
 \norm u\norm _{k,p,G} = \Vert\,u\,\Vert_{k,p,G} + |\lambda|^{k/2m}\Vert\,u\,\Vert_{0,p,G} \; \textrm{for} \; u \in W^k_p(G).
 \end{equation*}
 We refer to \cite{GK} for details concerning parameter-dependent norms.

 Assume for the moment that when $G \ne \mathbb{R}^n$ the boundary $\partial\,G$ is of class $C^{2m}$. Then for $k \in \mathbb{N}$ with $k \leq 2m$
 the vectors $u \in W_p^k(G)$  have boundary values $v = u\,\bigr|_{\partial\,G}$
 and we denote the space of these boundary values by $W^{k-1/p}_p(\partial\,G)$ and by $\Vert\,\cdot\,\Vert_{k-1/p,p,\partial\,G}$ the norm in this space,
 where $\Vert\,v\Vert_{k-1/p,p,\partial\,G} = \inf {\Vert\,u\,\Vert_{k.p.G}}$ for $v \in W^{k-1/p}_p(\partial\,G)$ and the infimum is taken over
 those $u \in W^k_p(G)$ for which $u\,\big|_{\partial\,G} = v$.
  In addition we will use norms depending upon the parameter
 $\lambda \in \mathbb{C}\,\setminus\, \{\,0\,\}$, namely
 \begin{equation*}
  \norm v\norm _{k-1/p,p,\partial\,G} = \Vert\,v\,\Vert_{k-1/p,p,\partial\,G} + |\lambda|^{(k-1/p)/2m}\Vert\,v\,\Vert_{0,p,\partial\,G} \; \textrm{for} \;
  v \in W^{k-1/p}_p(\partial\,G),
 \end{equation*}
where $\Vert\cdot\Vert_{0,p,\partial\,G}$ denotes the norm in $L_p(\partial\,G)$.   Finally, we let $\mathbb{R}_{\pm} = \{\,t \in \mathbb{R}\,\bigl| t \gtrless 0 \,\}$.

 We are now going to define the generalized Sobolev spaces which will be considered in this paper.
 To this end we require the following definition.
 \begin{definition} \label{D:2.1}
 Let $\mathcal{K}$ denote
 the class of real-valued measurable functions defined on $\mathbb{R}^n$ with values in $(0,\infty)$ such that for each member
 $\chi \in \mathcal{K}$ there exist positive constants $C^{\dagger}_{\chi}$ and $\ell^{\dagger}_{\chi}$ for which the inequality
 \begin{equation*}
 \chi(\xi+\eta) \leq C^{\dagger}_{\chi}\left(1 + |\xi|\right)^{\ell^{\dagger}_{\chi}}\chi(\eta) \; \textrm{holds for every pair}
 \; \xi,\eta \in \mathbb{R}^n.
 \end{equation*}
 Note that
 \begin{equation*}
  (C^{\dagger}_{\chi})^{-1}\chi(0)\left(1+|\xi|\right)^{-\ell^{\dagger}_{\chi}} \leq \chi(\xi) \leq C^{\dagger}_{\chi}\chi(0)\left(1+|\xi|\right)^{\ell^{\dagger}_{\chi}}
  \; \textrm{and also that} \; \chi^{-1} \in \mathcal{K}.
  \end{equation*}
  \end{definition}
  The class $\mathcal{K}$ is precisely the class of weight functions mentioned in Section \ref{S:1} which was introduced by H\"{o}rmander in
  \cite{H} and used there to define the Banach space $\mathcal{B}_{p,k}$.
    As mentioned in Section  \ref{S:1}, Volevich and Paneyakh \cite{VP} defined the more restrictive
    class of    weight functions  $\mathcal{K}_0$  as the set of all smooth functions in $\mathcal K$ which are multipliers in $\mathscr S(\R^n)$ and whose inverses belong to $\mathcal K$, too.
  Our aim now is to use $\mathcal{K}$ in order to define for the case $p \leq 2$ a less restrictive
  generalized Bessel-potential space than that considered in \cite{VP}. However for future considerations we will have to restrict ourselves to the subset $\mathcal{K}_1$
  where
  \[
   \mathcal{K}_1 = \bigl\{\chi \in \mathcal{K}\bigl|\,\chi \in C^{2n^+}(\mathbb{R}^n), |D^{\alpha}\chi(\xi)| \leq C_{\chi}\langle\,\xi\,\rangle^{\ell_{\chi}} \; \textrm{for}
    \; \xi \in \mathbb{R}^n \; \textrm{and}\; |\alpha| \leq  2n^+\,\bigr\},
  \]
    where $C_{\chi}$ and $\ell_{\chi}$ denote positive constants and for $t \geq  0, t^+ = [t/2] +1$, and $[t/2]$ denotes the integer part of $t/2$.

 \begin{remark} \label{R:2.2}
  In order to avoid a proliferation of notation, we will also suppose that for $\chi \in \mathcal{K}_0$, $|D_{\xi}^{\alpha}\chi(\xi)| \leq C_{\chi}\langle\,\xi\,\rangle^{\ell_{\chi}}$
  for $|\alpha| \leq 2n^+$.
  \end{remark}

  We refer to \cite[p.35]{H} and \cite{VP} for examples of function in $\mathcal{K}_0$ and $\mathcal{K}_1$. Note that the following functions
  indicated there: (1) $\chi(\xi) =
\langle\,\xi\,\rangle^t, t \in \mathbb{R}$, (2) $\chi(\xi) = \tilde{P}(\xi) = \left(\sum_{|\alpha| \geq 0}|D_{\xi}^{\alpha}P(\xi)|^2\right)^{1/2}$, where $P$ is a polynomial,
and (3) $\chi(\xi) = \left(1 + \sum_{j=1}^n|\xi_j|^{2\ell_j}\right)^t$, where
$t \in \mathbb{R}$ and the $\ell_j \in \overline{\mathbb{R}_+}$, all  belong to $\mathcal{K}_0$ and $\mathcal{K}_1$. Note also that if $\chi \in \
\mathcal{K}_1$ (resp. $\mathcal{K}_0$), then so does $\chi^{-1}$.

  \begin{definition} \label{D:2.3}
 For $1 < p \leq 2$  we henceforth suppose that $\chi \in \mathcal{K}_1$   and  let
 \begin{equation*}
 \begin{aligned}
  H^{\chi}_p(\mathbb{R}^n) = &\bigl\{\,u \in \mathscr S^{\prime}(\mathbb{R}^n)\bigl|\,\mathscr Fu \; \textrm{is a measurable function on} \; \mathbb{R}^n,
  \chi \mathscr Fu \in \mathscr S^{\prime}(\mathbb{R}^n)   \;\textrm{and} \\
  & \mathscr F^{-1} \chi \mathscr Fu \in L_p(\mathbb{R}^n)\,\bigr\},
  \end{aligned}
  \end{equation*}
while for $2 < p < \infty
$ it will always be supposed that $\chi \in \mathcal{K}_0$, and in this case we let
\begin{equation*}
 H^{\chi}_p(\mathbb{R}^n) = \bigl\{u \in \mathscr S^{\prime}(\mathbb{R}^n)\,\bigl|\, \mathscr F^{-1}\chi \mathscr Fu \in L_p(\mathbb{R}^n)\,\big\}.
 \end{equation*}
 We then equip $H^{\chi}_p(\mathbb{R}^n)$ with the norm   $\Vert\,u\,\Vert^{\chi}_{0,p,\mathbb{R}^n}$ =
 $\Vert\,\mathscr F^{-1}\chi \mathscr Fu\,\Vert_{0,p,\mathbb{R}^n}$ for $ u \in H^{\chi}_p(\mathbb{R}^n)$.
 \end{definition}

 We henceforth suppose that $1 < p < \infty$ and let $\hat{u} = \mathscr Fu$ for $u \in \mathscr S^{\prime}(\mathbb{R}^n)$.
 \begin{proposition} \label{P:2.4}
 $H^{\chi}_p(\mathbb{R}^n)$ is a Banach space.
 \end{proposition}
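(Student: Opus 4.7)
The plan is to show that the map $T\colon H^{\chi}_p(\mathbb{R}^n) \to L_p(\mathbb{R}^n)$ defined by $Tu = \mathscr F^{-1}\chi\mathscr Fu$ is an isometric isomorphism of vector spaces; completeness of $H^{\chi}_p(\mathbb{R}^n)$ then follows immediately from the completeness of $L_p(\mathbb{R}^n)$.

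First I would verify the normed space axioms. Linearity of $H^{\chi}_p(\mathbb{R}^n)$ as well as positive homogeneity and the triangle inequality for $\Vert\cdot\Vert^{\chi}_{0,p,\mathbb{R}^n}$ are inherited directly from the linearity of $\mathscr F,\mathscr F^{-1}$ and from the corresponding properties of $\Vert\cdot\Vert_{0,p,\mathbb{R}^n}$. For positive definiteness, if $Tu = 0$ then $\chi\mathscr Fu = 0$ in $\mathscr S^{\prime}(\mathbb{R}^n)$; in the case $1 < p \leq 2$ the distribution $\mathscr Fu$ is a measurable function, so $\chi(\xi) > 0$ forces $\mathscr Fu = 0$ a.e., while in the case $2 < p < \infty$ I may multiply by $\chi^{-1} \in \mathcal{K}_0$, which is a multiplier on $\mathscr S^{\prime}(\mathbb{R}^n)$, to deduce $\mathscr Fu = 0$; either way $u = 0$.

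Next I would establish surjectivity of $T$. Given $v \in L_p(\mathbb{R}^n)$, the idea is to set $u = \mathscr F^{-1}(\chi^{-1}\mathscr Fv)$. When $2 < p < \infty$ this is literally a composition of multipliers on $\mathscr S^{\prime}(\mathbb{R}^n)$ since $\chi^{-1} \in \mathcal{K}_0$, and by construction $\mathscr F^{-1}\chi\mathscr Fu = v \in L_p(\mathbb{R}^n)$, so $u \in H^{\chi}_p(\mathbb{R}^n)$. When $1 < p \leq 2$, the Hausdorff--Young inequality yields $\mathscr Fv \in L_{p^{\prime}}(\mathbb{R}^n)$ as a measurable function; the polynomial bound $|\chi^{-1}(\xi)| \leq C_{\chi^{-1}}\langle\xi\rangle^{\ell_{\chi^{-1}}}$ (valid because $\chi^{-1} \in \mathcal{K}_1$) combined with H\"older's inequality in the pairing with Schwartz test functions shows that $\chi^{-1}\mathscr Fv$ defines a tempered distribution. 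Consequently $u := \mathscr F^{-1}(\chi^{-1}\mathscr Fv) \in \mathscr S^{\prime}(\mathbb{R}^n)$ satisfies: $\mathscr Fu = \chi^{-1}\mathscr Fv$ is measurable, $\chi\mathscr Fu = \mathscr Fv \in \mathscr S^{\prime}(\mathbb{R}^n)$, and $\mathscr F^{-1}\chi\mathscr Fu = v \in L_p(\mathbb{R}^n)$, so $u \in H^{\chi}_p(\mathbb{R}^n)$ and $Tu = v$. In both cases $\Vert u\Vert^{\chi}_{0,p,\mathbb{R}^n} = \Vert v\Vert_{0,p,\mathbb{R}^n}$ by definition of the norm.

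Thus $T$ is a linear isometric bijection onto the Banach space $L_p(\mathbb{R}^n)$, and $H^{\chi}_p(\mathbb{R}^n)$ inherits completeness: any Cauchy sequence $\{u_k\}$ is mapped to a Cauchy sequence $\{Tu_k\}$ in $L_p(\mathbb{R}^n)$ converging to some $v$, and the $u \in H^{\chi}_p(\mathbb{R}^n)$ constructed above with $Tu = v$ satisfies $\Vert u_k - u\Vert^{\chi}_{0,p,\mathbb{R}^n} = \Vert Tu_k - v\Vert_{0,p,\mathbb{R}^n} \to 0$. The main obstacle I anticipate is the surjectivity step for $1 < p \leq 2$: since members of $\mathcal{K}_1$ are only required to be $C^{2n^+}$ rather than smooth, $\chi^{-1}$ is not manifestly a multiplier on $\mathscr S^{\prime}(\mathbb{R}^n)$, so one must work directly with $\chi^{-1}\mathscr Fv$ as a locally integrable function and verify the tempered-distribution property by hand using Hausdorff--Young together with the explicit polynomial growth estimates afforded by Definition \ref{D:2.1} and the definition of $\mathcal{K}_1$.
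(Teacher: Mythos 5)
Your argument is correct, and it is packaged differently from the paper's. The paper cites Volevich--Paneyakh for $p>2$ and, for $p\le 2$, runs a direct Cauchy-sequence argument: given a Cauchy sequence $\{u_j\}$ it sets $v_j=\mathscr F^{-1}\chi\mathscr F u_j$, obtains the $L_p$-limit $v$, uses Hausdorff--Young to get $\chi\mathscr F u_j\to\mathscr F v$ in $L_{p'}$ and hence in $\mathscr S'(\mathbb R^n)$, and then identifies the limit of $\mathscr F u_j$ as $\chi^{-1}\mathscr F v$ via the H\"older estimate $|(\mathscr F u_j-\chi^{-1}\mathscr F v)(\phi)|\le\Vert\chi\mathscr F u_j-\mathscr F v\Vert_{0,p',\mathbb R^n}\Vert\chi^{-1}\phi\Vert_{0,p,\mathbb R^n}$. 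You instead prove that $T=\mathscr F^{-1}\chi\mathscr F$ is an isometric linear bijection of $H_p^{\chi}(\mathbb R^n)$ onto $L_p(\mathbb R^n)$ and transport completeness back. The analytic core is the same in both routes --- Hausdorff--Young for $p\le 2$, the two-sided polynomial bounds on $\chi^{\pm1}$ from Definition \ref{D:2.1}, and (in your $p>2$ case) the multiplier property of $\chi^{-1}\in\mathcal K_0$ --- but your packaging buys several things: it treats both ranges of $p$ uniformly without invoking \cite{VP}; it makes explicit the verification, only implicit in the paper's proof, that the candidate limit $u=\mathscr F^{-1}\chi^{-1}\mathscr F v$ actually lies in $H_p^{\chi}(\mathbb R^n)$ and that $u_j\to u$ in the $H_p^{\chi}$-norm rather than merely in $\mathscr S'(\mathbb R^n)$; and it anticipates the $k=0$ case of Proposition \ref{P:2.10}, including the surjectivity of $\mathscr F^{-1}\chi\mathscr F$ onto $L_p(\mathbb R^n)$, which the paper establishes separately later. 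The price is that you must check well-definedness and positive definiteness of the norm and surjectivity up front, which you do correctly (in particular your handling of $\chi^{-1}\mathscr F v$ as a tempered distribution for $p\le 2$, where $\chi^{-1}$ need not be smooth, is the right way around the obstacle you flag).
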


 \begin{proof}
 Since the proposition is proved in \cite{VP} for the case $p > 2$, we need only prove the proposition for the case $p \leq 2$.
 Accordingly let $\bigl\{\,u_j\,\bigr\}_{j \geq 1}$ denote a Cauchy sequence in $H^{\chi}_p(\mathbb{R}^n)$and put $v_j = \mathscr F^{-1}\chi\,\hat{u}_j$ for $ j \geq 1$.
 Then $\big\{\,v_j\,\}_{j \geq 1}$ is a Cauchy sequence in $L_p(\mathbb{R}^n)$, and hence converges in $L_p(\mathbb{R}^n)$ to some vector $v$.
  It now  follows from the Hausdorff-Young theorem \cite[p.6]{BL} that $\chi\hat{u}_j \to \hat{v}$ in $L_{p^{\prime}}(\mathbb{R}^n)$, where $p^{\prime} = p/(p-1)$,
  and hence also in $\mathscr S^{\prime}(\mathbb{R}^n)$,
as $j \to \infty$.
 Thus for $\phi \in \mathscr S(\mathbb{R}^n)$,
 \[
  \bigl|\left(\hat{u}_j - \chi^{-1}\hat{v}\right)(\phi)\bigr| =  \Bigl|\int_{\mathbb{R}^n}\left(\hat{u}_j - \chi^{-1}\hat{v}\right)\phi\,dx\Bigr|
   \leq \Vert\chi\hat{u}_j - \hat{v}\Vert_{0,p^{\prime},\mathbb{R}^n}\Vert\chi^{-1}\phi\Vert_{0,p,\mathbb{R}^n} \to 0 \]
 as $j\to\infty$.

Thus we have shown that $\hat{u}_j \to \chi^{-1}\hat{v}$ in $\mathscr S^{\prime}(\mathbb{R}^n)$ as $ j \to \infty$,
and hence $u_j \to \mathscr F^{-1}\chi^{-1}\hat{u}$ in $\mathscr S^{\prime}(\mathbb{R}^n)$ as $j \to \infty$, which completes the proof of the proposition.
\end{proof}

 \begin{proposition} \label{P:2.5}
 It is the case that $\mathscr S(\mathbb{R}^n) \subset H^{\chi}_p(\mathbb{R}^n) \subset \mathscr S^{\prime}(\mathbb{R}^n)$ in
 both the algebraic and topological sense. Furthermore, $\mathscr S(\mathbb{R}^n)$ is dense in $H^{\chi}_p(\mathbb{R}^n)$.
 \end{proposition}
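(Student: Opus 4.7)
The plan is to view $H^\chi_p(\mathbb R^n)$ through the map $Tu:=\mathscr F^{-1}\chi\,\hat u$, which by Definition~\ref{D:2.3} is an isometry $H^\chi_p(\mathbb R^n)\to L_p(\mathbb R^n)$. All three conclusions then reduce to estimates involving $T$ and the polynomial derivative bound $|D^\alpha\chi(\xi)|\leq C_\chi\langle\xi\rangle^{\ell_\chi}$ for $|\alpha|\leq 2n^+$, which holds for $\chi\in\mathcal K_1$ by definition and for $\chi\in\mathcal K_0$ by Remark~\ref{R:2.2}.

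For the inclusion $\mathscr S(\mathbb R^n)\subset H^\chi_p(\mathbb R^n)$, given $\phi\in\mathscr S$, the derivative bound on $\chi$ together with the Schwartz decay of $\hat\phi$ places $\chi\hat\phi$ in $C^{2n^+}(\mathbb R^n)$ with every derivative of order $\leq 2n^+$ in $L_1(\mathbb R^n)$. Integrating by parts $2n^+$ times in the inverse Fourier integral yields
\[
(1+|x|^{2n^+})\,|T\phi(x)|\leq C\sum_{|\alpha|\leq 2n^+}\Vert D^\alpha(\chi\hat\phi)\Vert_{L_1(\mathbb R^n)};
\]
since $2n^+>n$, this shows $T\phi\in L_q(\mathbb R^n)$ for every $q\in(1,\infty)$, and the right-hand side is controlled by a fixed Schwartz seminorm of $\phi$, giving continuity.

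For the inclusion $H^\chi_p(\mathbb R^n)\subset\mathscr S'(\mathbb R^n)$, given $u\in H^\chi_p$ and $v:=Tu\in L_p$, I transfer the Schwartz pairing by Parseval-type duality:
\[
\langle u,\phi\rangle=\langle v,\mathscr F(\chi^{-1}\mathscr F^{-1}\phi)\rangle_{L_p,L_{p'}},\qquad\phi\in\mathscr S(\mathbb R^n).
\]
For $p\leq 2$ the transfer rests on $\chi\hat u=\hat v\in L_{p'}$ (by Hausdorff--Young, exactly as used in the proof of Proposition~\ref{P:2.4}); for $p>2$, $\chi^{-1}\in\mathcal K_0$ acts as an $\mathscr S$-multiplier and the manipulation is direct. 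H\"older's inequality then gives $|\langle u,\phi\rangle|\leq\Vert v\Vert_{L_p}\cdot N(\phi)$ with $N(\phi)$ a Schwartz seminorm controlled by the polynomial bounds on $\chi^{-1}$, yielding both the algebraic inclusion and its continuity.

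For density, I first observe that $T$ is an isometric isomorphism of $H^\chi_p$ onto $L_p$: given $v\in L_p$, the element $u:=\mathscr F^{-1}(\chi^{-1}\hat v)$ is a tempered distribution (by the polynomial bound on $\chi^{-1}$) and satisfies $Tu=v$. When $\chi\in\mathcal K_0$, $\chi$ is a multiplier on $\mathscr S$ and $T$ restricts to a bijection $\mathscr S\to\mathscr S$; density transfers immediately from density of $\mathscr S$ in $L_p$. When $\chi\in\mathcal K_1$ and $p\leq 2$, $T$ no longer preserves $\mathscr S$. I would instead exploit the dense subset $\mathscr F^{-1}(C_c^\infty(\mathbb R^n))\subset L_p$: for $v_k$ in this subset, $u_k:=T^{-1}v_k=\mathscr F^{-1}(\chi^{-1}\hat v_k)$ is an entire analytic function of exponential type with physical-space decay $|x|^{-2n^+}$, but is not Schwartz, so I truncate by a smooth cutoff $\eta_R(x)=\eta(x/R)$ to obtain $\eta_R u_k\in C_c^\infty(\mathbb R^n)\subset\mathscr S$. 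The main obstacle is showing $\eta_R u_k\to u_k$ in $H^\chi_p$ as $R\to\infty$: since $T$ does not commute with multiplication by $\eta_R$, one must control $\Vert T(\eta_R u_k)-v_k\Vert_{L_p}$ via a commutator estimate for $[T,\eta_R]$, leveraging the $C^{2n^+}$-regularity of $\chi$ and the concentration of $\hat\eta_R$ at the origin as $R\to\infty$. This commutator estimate is the substantive analytic content of the density assertion in the $\mathcal K_1$ case.
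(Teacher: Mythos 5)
Your treatment of the two embeddings is correct, and it is a mildly different route from the paper's: for $\mathscr S(\mathbb R^n)\subset H^{\chi}_p(\mathbb R^n)$ you integrate by parts $2n^+$ times to get $\sup_x(1+|x|^{2n^+})|\mathscr F^{-1}\chi\hat\phi(x)|\le C\sum_{|\alpha|\le 2n^+}\Vert D^\alpha(\chi\hat\phi)\Vert_{L_1}$, which is an elementary substitute for the paper's use of Mikhlin's theorem applied to the symbol $\chi(\xi)\langle\xi\rangle^{-\ell_\chi-n^+}$; and your Hausdorff--Young/H\"older bound for $H^{\chi}_p(\mathbb R^n)\subset\mathscr S'(\mathbb R^n)$ is essentially the estimate already contained in the proof of Proposition \ref{P:2.4}, which is what the paper invokes. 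The observation that $T=\mathscr F^{-1}\chi\mathscr F$ is an isometric bijection of $H^{\chi}_p(\mathbb R^n)$ onto $L_p(\mathbb R^n)$ is also correct, and disposes of the case $\chi\in\mathcal K_0$.

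The density assertion for $p\le 2$, however, is not proved: you reduce it to showing $\eta_Ru_k\to u_k$ in $H^{\chi}_p$, i.e.\ to an estimate for the commutator $[T,\eta_R]$, and then leave that estimate unestablished, yourself calling it ``the substantive analytic content.'' That is exactly the hard step, and it is not routine under the bare assumption $\chi\in\mathcal K_1$: note that the paper itself, in Proposition \ref{P:2.7}, must impose an \emph{additional} hypothesis on $\chi$ (a bound on $\xi^\alpha D^\alpha_\xi(\chi(\xi+\eta)\chi(\xi)^{-1})$) in order to prove that multiplication by a fixed $C^\infty_0$ function maps $H^{\chi}_p$ into itself -- which is precisely the type of statement your truncation needs, and moreover uniformly and with convergence as $R\to\infty$. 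So, as written, the proposal has a genuine gap. Within your own framework the gap is repairable by approximating on the Fourier side instead of truncating in $x$: since $\hat v_k\in C^\infty_0$ and $\chi^{-1}\in C^{2n^+}$, mollify $\chi^{-1}\hat v_k$ to obtain $g_j\in C^\infty_0$ with supports in a fixed compact set and $g_j\to\chi^{-1}\hat v_k$ in $C^{2n^+}$; then $\mathscr F^{-1}g_j\in\mathscr S(\mathbb R^n)$ and your integration-by-parts bound gives $\Vert T\mathscr F^{-1}g_j-v_k\Vert_{L_p}\le C\sum_{|\alpha|\le 2n^+}\Vert D^\alpha\bigl(\chi(g_j-\chi^{-1}\hat v_k)\bigr)\Vert_{L_1}\to 0$, using only local bounds on the derivatives of $\chi$. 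The paper's own proof avoids approximation altogether and argues by duality: if $\mathscr S$ were not dense, Hahn--Banach produces $v\ne 0$ in $L_{p'}$ annihilating $T\mathscr S$; Mikhlin's theorem shows $T$ maps $W^{2\ell^+}_p$ boundedly into $L_p$ with $W^{4\ell^+}_p$ contained in its range, whence $v$ annihilates $W^{4\ell^+}_p$ and so $v=0$ as an element of $W^{-4\ell^+}_{p'}$, a contradiction.
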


 \begin{proof}
 Since the proposition is proved in \cite{VP} for the case $p > 2$, we need only prove the proposition for the case $p \leq 2$.
 Accordingly, it follows from the proof of Proposition \ref{P:2.4} that $H^{\chi}_p(\mathbb{R}^n) \subset \mathscr S^{\prime}(\mathbb{R}^n)$. Turning
 now to $\mathscr S(\mathbb{R}^n)$, we have for $\phi \in \mathscr S(\mathbb{R}^n)$, $\mathscr F^{-1}\chi\hat{\phi} = \mathscr F^{-1}\chi(\xi)\langle\xi\rangle^{-\ell_{\chi}-n^+}\langle\xi\rangle^{\ell_{\chi}+n^+}\hat{\phi}$.
 Hence it follows from Mikhlin's multiplier theorem \cite[p.166]{T} that
 $\Vert\,\mathscr F^{-1}\chi\hat{\phi}\Vert_{0,p,\mathbb{R}^n} \leq C_{\chi,p}\Vert\,\mathscr F^{-1}\langle\xi\rangle^{\ell_{\chi}+n^+}\hat{\phi}\Vert_{0,p,\mathbb{R}^n}$,
 where $C_{\chi,p}$ denotes a positive constant.
 But since $\langle\cdot\rangle^{\ell_{\chi}+n^+}$ is a multiplier on $\mathscr S(\mathbb{R}^n)$, we conclude
 that $\mathscr F^{-1}\langle\xi\rangle^{\ell_{\chi}+n^+}\hat{\phi} \in \mathscr S(\mathbb{R}^n)$, and hence
 $\Vert\,\mathscr F^{-1}\chi\hat{\phi}\Vert_{0,p,\mathbb{R}^n} < \infty$. Thus we conclude that $\mathscr S(\mathbb{R}^n)$ is a subspace of $H^{\chi}_p(\mathbb{R}^n)$.

 Finally let $f$ belong to the dual space of $H^{\chi}_p(\mathbb{R}^n)$. Then $\bigl|f(u)\bigr| \leq C\Vert\,\mathscr v\mathscr F^{-1}\chi\hat{u}\Vert_{0,p,\mathbb{R}^n}$
 for every $u \in H^{\chi}_p(\mathbb{R}^n)$, where $C$ denotes a positive constant, Hence by the Hahn-Banach theorem there exists a $v \in L_{p^{\prime}}(\mathbb{R}^n)$
 such that $f(u) = \int_{\mathbb{R}^n}v\mathscr F^{-1}\chi\hat{u}\,dx$ for $u \in H^{\chi}_p(\mathbb{R}^n)$. This implies that if $\mathscr S(\mathbb{R}^n)$ is not dense in $H^{\chi}_p(\mathbb{R}^n)$,
 then there is a $v \ne 0$ in $L_{p^{\prime}}(\mathbb{R}^n)$ such that

 \begin{equation} \label{E:2.1}
    \int_{\mathbb{R}^n}v\,\mathscr F^{-1}\chi\hat{\phi}\,dx = 0 \; \textrm{for every} \; \phi \in \mathscr S(\mathbb{R}^n).
 \end{equation}

In order to make use of \eqref{E:2.1} to prove our assertion concerning density, we require some further information. To this end let us show that
$\mathscr F^{-1}\chi\mathscr F$ maps $W^{2\ell^+}_p(\mathbb{R}^n)$  continuously into $L_p(\mathbb{R}^n)$, where $\ell =$ max$\{\,\ell_{\chi}+n^+,\ell_{\chi^{-1}}+n^+\,\}$.
Indeed for $u \in W^{2\ell^+}_p(\mathbb{R}^n)$, we have $\Vert\,\mathscr F^{-1}\chi\,\mathscr Fu\Vert_{0,p,\mathbb{R}^n} =
\Vert\,\mathscr F^{-1}\chi\langle\,\cdot\,\rangle^{-2\ell^+}\mathscr F\left((1-\Delta)^{\ell^+}u\right)\Vert_{0,p,\mathbb{R}^n}$, \linebreak where $\Delta$ denotes the Laplacian on $\mathbb{R}^n$,
and hence the required result follows from Mikhlin's multiplier theorem.

Let us also show that $W^{4\ell^+}_p(\mathbb{R}^n) \subset$\, ran\,$\mathscr F^{-1}\chi\,\mathscr F\left(W^{2\ell^+}_p(\mathbb{R}^n)\right)$, where ran denotes range.
Indeed if  $w \in W^{4\ell^+}_p(\mathbb{R}^n)$ and we let $u = \mathscr F^{-1}\chi^{-1}\mathscr F\,w$, then $\Vert\,u\Vert_{2\ell^+,p,\mathbb{R}^n} =
\Vert\,\mathscr F^{-1}\chi^{-1}\langle\,\cdot\,\rangle^{-2\ell^+}\mathscr F\left((1-\Delta)^{2\ell^+}w\right)\Vert_{0,p,\mathbb{R}^n}$, and the required result follows from   \linebreak
Mikhlin's multiplier theorem and the fact that $\mathscr F^{-1}\chi\,\mathscr Fu = w$ (it is important to note that both $\hat{u}$ and $\hat{w}$ are in $L_{p^{\prime}}(\mathbb{R}^n)$).

Next let $w \in W^{4\ell^+}_p(\mathbb{R}^n)$ and let $u \in W^{2\ell^+}_p(\mathbb{R}^n)$ such that $\mathscr F^{-1}\chi\,\mathscr Fu = w$. Also let $\{\,\psi_j\,\}_{j \geq 1}$ denote
a sequence in $\mathscr S(\mathbb{R}^n)$  such that $\psi_j \to u$ in $W^{2\ell^+}_p(\mathbb{R}^n)$. Then for $j \geq 1$, we have
\begin{equation*}
 \int_{\mathbb{R}^n}v\,w\,dx = \int_{\mathbb{R}^n}v\mathscr F^{-1}\chi\,\hat{\psi_j}dx +
 \int_{\mathbb{R}^n}v\mathscr F^{-1}\chi\langle\,\cdot\,\rangle^{-2\ell^+}\mathscr F(1 - \Delta)^{\ell^+}(u - \psi_j)dx,
\end{equation*}
and hence in light of \eqref{E:2.1} and Mikhlin's multiplier theorem we have
\begin{equation*}
\bigl|\int_{\mathbb{R}^n}\,vw\,dx\bigr| \leq C\Vert\,v\Vert_{0,p^{\prime},\mathbb{R}^n}\,\Vert\,u - \psi_j\Vert_{2\tilde{\ell}^+,p,\mathbb{R}^n},
\end{equation*}
where the constant $C$ does not depend upon $j$.
  Thus we conclude that $\int_{\mathbb{R}^n}vw\,dx = 0$ for every $w \in W^{4\ell^+}_p(\mathbb{R}^n)$. But since $ v \in W^{-4\ell^+}_{p^{\prime}}(\mathbb{R}^n)$,
 the dual space of $W^{4\ell^+}_p(\mathbb{R}^n)$ (see \cite[Theorem 2.6, p.198]{T}),
 we must have $v = 0$, which is a contradiction, and this completes the proof of the proposition.
\end{proof}

\begin{proposition} \label{P:2.6}
 $H^{\chi}_p(\mathbb{R}^n)$ is separable.
\end{proposition}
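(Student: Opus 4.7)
The plan is to exhibit $H^{\chi}_p(\mathbb{R}^n)$ as (isometrically isomorphic to) a subspace of the separable Banach space $L_p(\mathbb{R}^n)$, from which separability is immediate.

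Concretely, I would define the map
\[
 T : H^{\chi}_p(\mathbb{R}^n) \to L_p(\mathbb{R}^n), \qquad Tu = \mathscr{F}^{-1}\chi \mathscr{F} u.
\]
By Definition \ref{D:2.3}, this is well defined and satisfies
$\|Tu\|_{0,p,\mathbb{R}^n} = \|u\|^{\chi}_{0,p,\mathbb{R}^n}$, so $T$ is a linear isometry. Injectivity is a short check: if $Tu_1 = Tu_2$ in $L_p(\mathbb{R}^n)$, then $\chi \hat u_1 = \chi \hat u_2$ as elements of $\mathscr{S}^{\prime}(\mathbb{R}^n)$ (via the inclusion $L_p \hookrightarrow \mathscr{S}^{\prime}$ used in Proposition \ref{P:2.4}), and since $\chi > 0$ pointwise and $\chi^{-1} \in \mathcal{K}$, multiplication by $\chi^{-1}$ is injective on the relevant tempered distributions, yielding $\hat u_1 = \hat u_2$ and hence $u_1 = u_2$.

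Now $L_p(\mathbb{R}^n)$ is separable for $1 < p < \infty$, so every subset, in particular the range $T\bigl(H^{\chi}_p(\mathbb{R}^n)\bigr)$, is separable in the induced metric. Pick a countable dense set $\{g_j\}_{j\geq 1}$ in $T(H^{\chi}_p(\mathbb{R}^n))$ and let $u_j = T^{-1}g_j$. Since $T$ is an isometry onto its range, $\{u_j\}_{j\geq 1}$ is a countable dense subset of $H^{\chi}_p(\mathbb{R}^n)$, which proves the proposition.

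There is no real obstacle in this argument; the only point that might warrant a sentence is the well-definedness of $T$ in the case $1 < p \leq 2$, but this is built into Definition \ref{D:2.3} (which requires $\mathscr{F} u$ to be a measurable function and $\mathscr{F}^{-1}\chi \mathscr{F} u \in L_p(\mathbb{R}^n)$). Alternatively, one could combine the density of $\mathscr{S}(\mathbb{R}^n)$ in $H^{\chi}_p(\mathbb{R}^n)$ established in Proposition \ref{P:2.5} with the well-known separability of $\mathscr{S}(\mathbb{R}^n)$, but the isometry argument is cleaner and avoids re-proving separability of the Schwartz space.
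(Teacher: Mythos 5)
Your proof is correct, but it follows a different route from the paper. The paper deduces separability from the chain of continuous embeddings $\mathscr S(\mathbb{R}^n)\subset H^{\ell_\chi+n^+}_p(\mathbb{R}^n)\subset H^{\chi}_p(\mathbb{R}^n)$ (the second inclusion coming from the Mikhlin-type estimate in the proof of Proposition \ref{P:2.5}) together with the density of $\mathscr S(\mathbb{R}^n)$ in $H^{\chi}_p(\mathbb{R}^n)$: a separable space densely and continuously embedded forces separability of the ambient space. You instead observe that $u\mapsto \mathscr F^{-1}\chi\mathscr F u$ is, by the very definition of the norm, a linear isometry of $H^{\chi}_p(\mathbb{R}^n)$ onto a subset of $L_p(\mathbb{R}^n)$, and invoke the hereditary separability of separable \emph{metric} spaces; the only point needing care is injectivity (equivalently, that $\Vert\cdot\Vert^{\chi}_{0,p,\mathbb{R}^n}$ is genuinely a norm), which you settle correctly for $p\le 2$ via positivity of $\chi$ acting a.e.\ on the measurable function $\hat u$, and for $p>2$ via $\chi^{-1}\in\mathcal K_0$ acting as a multiplier on $\mathscr S^{\prime}(\mathbb{R}^n)$ — a fact the paper tacitly assumes anyway when it calls $H^{\chi}_p(\mathbb{R}^n)$ a normed (indeed Banach) space. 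Your argument is shorter and more elementary: it needs neither completeness (Proposition \ref{P:2.4}) nor the density of $\mathscr S(\mathbb{R}^n)$ (Proposition \ref{P:2.5}), which is the hardest ingredient of the paper's one-line proof; what the paper's route buys in exchange is the explicit dense continuous embedding of the classical Bessel-potential space $H^{\ell_\chi+n^+}_p(\mathbb{R}^n)$, a by-product of the earlier proofs rather than anything extra. Your closing alternative (density of $\mathscr S$ plus its separability in the induced norm) is essentially the paper's argument, so your main proof is indeed the genuinely different one.
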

 \noindent\textit{Proof.}
 In light of what was shown above, we see that the embeddings $\mathscr S(\mathbb{R}^n) \subset H_p^{\ell_{\chi}+n^+}(\mathbb{R}^n) \subset H^{\chi}_p(\mathbb{R}^n)$ hold,
 where $H_p^{\ell_{\chi}+n^+}(\mathbb{R}^n)$ denotes the Bessel-potential space of order $\ell_{\chi}+n^+$ based on $L_p(\mathbb{R}^n)$ (see \cite[p.177]{T}).
 Since $H_p^{\ell_{\chi}+n^+}(\mathbb{R}^n)$ is separable, the assertion of the proposition is an immediate consequence
 of Proposition \ref{P:2.4}.
 \hfill{$\square$}

 Under a further assumption on $\chi$ we also have the following result.
 \begin{proposition} \label{P:2.7}
  Suppose   that $u \in H^{\chi}_p(\mathbb{R}^n)$   and $\phi \in C^{\infty}_0(\mathbb{R}^n)$.
  Suppose in addition $\bigl|\xi^{\alpha}D^{\alpha}_{\xi}\left(\chi(\xi+\eta)\chi(\xi)^{-1}\right)\bigr| \leq  c_{\chi}\langle\,\eta\,\rangle^{k_{\chi}}$
  for $\eta \in \mathbb{R}^n$ and $|\alpha| \leq n^+$, where $c_{\chi}$ and $k_{\chi}$ denote positive constants.
  Then $\phi\,u \in H^{\chi}_p(\mathbb{R}^n)$ and
  $\norm \phi\,u\norm _{0,p,\mathbb{R}^n}^{\chi} \leq C_{p,\chi,\phi}\norm u\norm _{0,p,\mathbb{R}^n}^{\chi}$, where the constant $C_{p,\chi,\phi}$ does not depend upon $\lambda$ and $u$.
 \end{proposition}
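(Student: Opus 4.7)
The plan is to reduce the estimate to Mikhlin's multiplier theorem, applied to the family of ratios $m_\eta(\xi) := \chi(\xi+\eta)\chi(\xi)^{-1}$, and then integrate against $\hat\phi$. By Proposition \ref{P:2.5} it suffices to prove the estimate for $u \in \mathscr S(\mathbb{R}^n)$ and then extend by density, so I may freely use Fubini and pointwise identities. First I would write $\widehat{\phi u} = (2\pi)^{-n/2}\hat\phi * \hat u$ and then decompose
\begin{equation*}
\chi(\xi)\widehat{\phi u}(\xi) = (2\pi)^{-n/2}\int_{\mathbb{R}^n}\hat\phi(\eta)\,\frac{\chi(\xi)}{\chi(\xi-\eta)}\,(\chi\hat u)(\xi-\eta)\,d\eta,
\end{equation*}
setting $g := \mathscr F^{-1}\chi\hat u \in L_p(\mathbb{R}^n)$ so that $\chi\hat u = \hat g$.

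Next I would apply $\mathscr F^{-1}$ under the integral sign. Using the translation identity $\mathscr F^{-1}[h(\xi-\eta)](x) = e^{i\eta\cdot x}\mathscr F^{-1}h(x)$ with $h(\xi) = m_\eta(\xi)\hat g(\xi)$, the previous display becomes
\begin{equation*}
\bigl(\mathscr F^{-1}\chi\widehat{\phi u}\bigr)(x) = (2\pi)^{-n/2}\int_{\mathbb{R}^n}\hat\phi(\eta)\,e^{i\eta\cdot x}(T_\eta g)(x)\,d\eta,\qquad T_\eta g := \mathscr F^{-1}\bigl[m_\eta\,\hat g\bigr].
\end{equation*}
Taking the $L_p$-norm and using Minkowski's inequality yields
\begin{equation*}
\Vert \mathscr F^{-1}\chi\widehat{\phi u}\Vert_{0,p,\mathbb{R}^n} \leq (2\pi)^{-n/2}\int_{\mathbb{R}^n}|\hat\phi(\eta)|\,\Vert T_\eta g\Vert_{0,p,\mathbb{R}^n}\,d\eta.
\end{equation*}

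The hypothesis on $\chi$ is exactly the Mikhlin condition $|\xi^\alpha D^\alpha_\xi m_\eta(\xi)|\leq c_\chi\langle\eta\rangle^{k_\chi}$ for $|\alpha|\leq n^+$, uniformly in $\xi$, with parameter $\eta$ contributing only through $\langle\eta\rangle^{k_\chi}$. Mikhlin's multiplier theorem (as cited in the proof of Proposition \ref{P:2.5}) therefore gives $\Vert T_\eta g\Vert_{0,p,\mathbb{R}^n} \leq C_{p,\chi}\langle\eta\rangle^{k_\chi}\Vert g\Vert_{0,p,\mathbb{R}^n}$. Plugging this in,
\begin{equation*}
\Vert\phi u\Vert^{\chi}_{0,p,\mathbb{R}^n} \leq C_{p,\chi}\,\Vert u\Vert^{\chi}_{0,p,\mathbb{R}^n}\int_{\mathbb{R}^n}|\hat\phi(\eta)|\langle\eta\rangle^{k_\chi}\,d\eta,
\end{equation*}
and the last integral is finite because $\phi \in C_0^\infty(\mathbb{R}^n)$ gives $\hat\phi \in \mathscr S(\mathbb{R}^n)$. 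Since no part of the estimate depends on the parameter $\lambda$, the corresponding bound for the parameter-dependent triple-bar norm follows immediately with the same constant, and density of $\mathscr S(\mathbb{R}^n)$ completes the extension to arbitrary $u \in H^{\chi}_p(\mathbb{R}^n)$.

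The main obstacle I expect is verifying that the hypothesis genuinely supplies the full Mikhlin condition for the \emph{quotient} $m_\eta$: $\chi$ itself only satisfies a polynomial bound, but the inequality in the proposition is tailored to cancel that growth in $\xi$ by the factor $\xi^\alpha$, uniformly in $\eta$ up to the factor $\langle\eta\rangle^{k_\chi}$. Once this is confirmed, every other step is routine manipulation of Fourier transforms and an application of Minkowski's integral inequality.
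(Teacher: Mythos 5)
Your argument is correct and is essentially the paper's own proof: the same convolution decomposition of $\chi\mathscr F(\phi u)$, Minkowski's integral inequality, and Mikhlin's multiplier theorem applied to the ratio $\chi(\cdot+\eta)\chi(\cdot)^{-1}$ uniformly in the shift $\eta$ up to the factor $\langle\eta\rangle^{k_{\chi}}$, followed by integration against $|\hat\phi(\eta)|\langle\eta\rangle^{k_{\chi}}$. The only cosmetic difference is that the paper first verifies directly that $\mathscr F(\phi u)$ is a measurable function for general $u\in H^{\chi}_p(\mathbb{R}^n)$ when $p\le 2$, whereas you recover membership of $\phi u$ in $H^{\chi}_p(\mathbb{R}^n)$ from the density argument combined with the continuous embedding into $\mathscr S^{\prime}(\mathbb{R}^n)$; both routes are sound.
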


 \begin{proof}
For $p > 2$ the proposition is proved in \cite{VP}, and hence we restrict
ourselves to the case $p \leq 2$. Accordingly, it is clear that $\phi\,u \in
\mathscr S^{\prime}$, and hence $\mathscr F\phi\,u  \in \mathscr S^{\prime}$. We therefore have to show firstly that $\mathscr F\phi\,u$ is a measurable function on $\mathbb{R}^n$.
Now observe that if we put $\check{f}(x) = f(-x)$, then
\[
 \begin{aligned}
   &\mathscr F_{x \to \xi}\phi\,u = (2\pi)^n\mathscr F^{-1}_{x \to \xi}\check{\phi}\check{u}
 = (2\pi)^n\mathscr F^{-1}_{x \to \xi}\check{\phi} \ast \mathscr F^{-1}_{x \to \xi}\check{u} \\
&= (2\pi)^{-n}\int_{\mathbb{R}^n}\hat{\phi}(\xi-\eta)\hat{u}(\eta)\,d\eta
= (2\pi)^{-n}\int_{\mathbb{R}^n}\hat{\phi}(\xi-\eta)\chi(\eta)^{-1}\chi(\eta)\hat{u}(\eta)\,d\eta.
  \end{aligned}
\]
If we make use of the fact that $\chi\,\hat{u} \in L_{p^{\prime}}(\mathbb{R}^n)$  and appeal to Definition \ref{D:2.1}, then it follows that
$\mathscr F_{x \to \xi}\phi\,u \in L^{loc}_p(\mathbb{R}^n)$, and hence is measurable on $\mathbb{R}^n$. Furthermore, because of density, we need only
complete the remainder of the proof under the assumption that $u \in \mathscr S(\mathbb{R}^n)$. Accordingly, it follows from Fubini's theorem
and Minkowski's inequality that
 \[
 \begin{aligned}
 &\Vert\,\mathscr F^{-1}_{\xi \to x}\chi(\cdot)\mathscr F_{y \to \xi}\phi\,u\Vert_{0,p,\mathbb{R}^n} \\
 &\leq (2\pi)^{-n}\int_{\xi \in \mathbb{R}^n}\langle\,\xi\rangle^{k_{\chi}}|\hat{\phi}(\xi)|\Vert\,\mathscr F^{-1}_{\eta \to x}\langle\,\xi\rangle^{-k_{\chi}}
 \chi(\eta+\xi)\chi(\eta)^{-1}\chi(\eta)\hat{u}\Vert_{0,p,\mathbb{R}^n}\,d\xi,
 \end{aligned}
 \]
 and hence the assertion of the proposition is an immediate consequence of Mikhlin's multiplier theorem.
\end{proof}

 We now turn to the definitions of the generalized Sobolev spaces which will be used in this paper, namely
 $W^{k,\chi}_p(\mathbb{R}^n), W^{k,\chi}_p(\Omega)$ for $k \in \mathbb{N}_0$, and $W^{k-1/p,\chi}_p(\Gamma), k \in \mathbb{N}$, with $k \leq 2m$ in all cases.

 \begin{definition} \label{D:2.8}
 For $k \in \mathbb{N}_0$ let $W^{k,\chi}_p(\mathbb{R}^n) = H^{\langle\,\cdot\,\rangle^k\chi}_p(\mathbb{R}^n)$, and  denote by
 $\Vert\,\cdot\,\Vert^{\chi}_{k,p,\mathbb{R}^n}$ the ordinary norm in this space (see Definition \ref{D:2.3})
 and by $\norm \cdot\norm ^{\chi}_{k,p,\mathbb{R}^n} = \Vert\,\cdot\,\Vert^{\chi}_{k,p,\mathbb{R}^n}
  + |\lambda|^{k/2m}\Vert\,\cdot\Vert^{\chi}_{0,p,\mathbb{R}^n}$ its parameter-dependent norm.
  \end{definition}

 \begin{flushleft}
  Note that $\langle\,\cdot\,\rangle^k\chi $ belongs to $ \mathcal{K}_1$ if $p \leq 2$
   and to  $\mathcal{K}_0$ otherwise.
 \end{flushleft}

 \begin{remark} \label{R:2.9}
  In the sequel it will always be supposed that all function spaces under consideration are equipped with their parameter-dependent norms unless otherwise stated.
  Furthermore, it is to be understood that when not stated explicitly, an isomorphism between any two such spaces is bounded in norm by a constant
  not dependent upon $\lambda$.
 \end{remark}

In the  following proposition  and in the proof of Proposition~\ref{P:2.12} below  we suppose that
for $k \in \N_0$, $W^k_p(\Omega)$ is equipped with its Bessel-potential space norm.

  \begin{proposition} \label{P:2.10}
  Let $k \in \mathbb{N}_0$ with $k \leq 2m$. Then the operator $\mathscr F^{-1}\chi\,\mathscr F$ maps $W^{k,\chi}_p(\mathbb{R}^n)$ isometrically and isomorphically onto $W^k_p(\mathbb{R}^n)$,
  and its norm as well as that of its inverse are bounded by a constant not dependent upon $\lambda$.
 \end{proposition}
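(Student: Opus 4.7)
The proof will proceed by simply unwinding the definitions: with $W^{k,\chi}_p(\mathbb{R}^n) = H^{\langle\cdot\rangle^k \chi}_p(\mathbb{R}^n)$ on one side and $W^k_p(\mathbb{R}^n)$ equipped with its Bessel-potential norm on the other side, the operator $T = \mathscr{F}^{-1}\chi\mathscr{F}$ matches the two weights exactly and the $\lambda$-independence will fall out as an actual equality of norms.

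First I would verify that $T$ maps $W^{k,\chi}_p(\mathbb{R}^n)$ into $W^k_p(\mathbb{R}^n)$. Given $u \in W^{k,\chi}_p(\mathbb{R}^n)$, set $v = Tu$, so that in the appropriate distributional sense $\mathscr{F}v = \chi\mathscr{F}u$. Then
\[
\mathscr{F}^{-1}\langle\cdot\rangle^k\mathscr{F}v = \mathscr{F}^{-1}\langle\cdot\rangle^k \chi \mathscr{F}u,
\]
which lies in $L_p(\mathbb{R}^n)$ by Definition \ref{D:2.8}; hence $v \in W^k_p(\mathbb{R}^n)$ and $\Vert v\Vert_{k,p,\mathbb{R}^n} = \Vert u\Vert^{\chi}_{k,p,\mathbb{R}^n}$. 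Similarly $\Vert v\Vert_{0,p,\mathbb{R}^n} = \Vert Tu\Vert_{0,p,\mathbb{R}^n} = \Vert u\Vert^{\chi}_{0,p,\mathbb{R}^n}$, so the parameter-dependent norms actually satisfy $\norm v\norm_{k,p,\mathbb{R}^n} = \norm u\norm^{\chi}_{k,p,\mathbb{R}^n}$, which is precisely the isometric statement with a constant independent of $\lambda$.

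For injectivity, if $Tu = 0$ then $\chi\mathscr{F}u = 0$ as an element of $\mathscr{S}'(\mathbb{R}^n)$ (in the case $p \le 2$, $\mathscr{F}u$ is a measurable function, so $\chi\mathscr{F}u$ vanishes almost everywhere, and since $\chi > 0$, so does $\mathscr{F}u$; in the case $p > 2$, $\chi$ acts as a multiplier in $\mathcal{K}_0$ whose inverse $\chi^{-1}$ is also a multiplier, and one multiplies through to obtain $\mathscr{F}u = 0$). Hence $u = 0$. For surjectivity, given $v \in W^k_p(\mathbb{R}^n)$, define $u = \mathscr{F}^{-1}\chi^{-1}\mathscr{F}v$. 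In both regimes $\chi^{-1}$ belongs to the appropriate class ($\mathcal{K}_1$ or $\mathcal{K}_0$ as the case may be), so $u \in \mathscr{S}'(\mathbb{R}^n)$ is well-defined. Then $\mathscr{F}^{-1}\langle\cdot\rangle^k\chi\mathscr{F}u = \mathscr{F}^{-1}\langle\cdot\rangle^k\mathscr{F}v \in L_p(\mathbb{R}^n)$ (and $\mathscr{F}u = \chi^{-1}\mathscr{F}v$ is measurable when $p \le 2$, by Hausdorff-Young applied to $v \in L_p$), so $u \in W^{k,\chi}_p(\mathbb{R}^n)$ and $Tu = v$.

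The only real care required is in the case $p \le 2$, where the underlying class is $\mathcal{K}_1$ rather than $\mathcal{K}_0$ and one must verify that the distributional manipulations are all legitimate, i.e.\ that $\chi\mathscr{F}u$ and $\chi^{-1}\mathscr{F}v$ are well-defined measurable functions and that $\chi^{-1}\mathscr{F}v \in \mathscr{S}'(\mathbb{R}^n)$ so that its inverse Fourier transform makes sense; this uses the polynomial bound on $\chi^{\pm 1}$ from Definition \ref{D:2.1} together with Hausdorff-Young. Once these checks are in place, every inequality in the proof is actually an equality, and the bound on the norm of $T^{-1}$ is automatic from isometry.
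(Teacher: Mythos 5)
Your proof is correct and follows essentially the same route as the paper's: both unwind Definitions \ref{D:2.3} and \ref{D:2.8}, send $u\mapsto v=\mathscr F^{-1}\chi\mathscr F u$ one way and $v\mapsto u=\mathscr F^{-1}\chi^{-1}\mathscr F v$ the other, and read off the isometry (hence $\lambda$-independence) from the equality of the Bessel-potential and weighted norms. Your extra care with measurability, temperedness of $\chi^{-1}\mathscr F v$, and injectivity in the case $p\le 2$ only makes explicit what the paper leaves implicit.
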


 \begin{proof}
 Let $u \in W^{k,\chi}_p(\mathbb{R}^n)$ and let $v = \mathscr F^{-1}\chi\hat{u}$. Then $\Vert\,  \mathscr F^{-1}\langle\,\cdot\,\rangle^k\,\mathscr Fv\Vert_{0,p,\mathbb{R}^n}
 = \Vert\,\mathscr F^{-1}\langle\,\cdot\,\rangle^k\chi\hat{u}\Vert_{0,p,\mathbb{R}^n}$, and hence $v \in W^k_p(\mathbb{R}^n)$ (see \cite[p. 177]{T}).

 Conversely, let $v \in W^k_p(\mathbb{R}^n)$ and let $u = \mathscr F^{-1}\chi^{-1}\hat{v}$. Then $\Vert\,\mathscr F^{-1}\langle\,\cdot\,\rangle^k\,\hat{u}\Vert_{0,p,\mathbb{R}^n} \\
 =
 \Vert\,\mathscr F^{-1}\langle\,\cdot\,\rangle^k\,\hat{v}\Vert_{0,p,\mathbb{R}^n}$,
 and hence $u \in W^{k,\chi}_p(\mathbb{R}^n)$.

 In light of these results and the definitions of the parameter-dependent norms concerned, the proof of the proposition is complete.
\end{proof}

 Let us now turn to the definitions of $W^{k,\chi}_p(\Omega)$ and $W^{k-1/p,\chi}_p(\Gamma)$ for those value of $k$ cited above.
 Accordingly, let $\mathscr D^{\prime}(\Omega)$ denote the space of distributions over $\Omega$.
 \begin{definition} \label{D:2.11}
 Let $W^{k,\chi}_p(\Omega) = \bigl\{\,u \in \mathscr D^{\prime}(\Omega)\;\textrm{such that}\; u
 = \overline{u}\bigl|_{\Omega} \;\textrm{for some}\; \overline{u} \in W^{k,\chi}_p(\mathbb{R}^n)\,\bigr\}$
 and equip $W^{k,\chi}_p(\Omega)$ with the norm $\norm u\norm ^{\chi}_{k,p,\Omega} = \;\textrm{inf}\; \norm \overline{u}\norm ^{\chi}_{k,p,\mathbb{R}^n}$,
 where  the infimum is taken over all $\overline{u} \in W^{k,\chi}_p(\mathbb{R}^n)$ such that $u = \overline{u}\bigl|_{\Omega}$.
 \end{definition}

 Note  that if we let $r_{W^{k,\chi}_p(\mathbb{R}^n) \to W^{k,\chi}_p(\Omega)}$ denote the  operator restricting the
 members of $W^{k,\chi}_p(\mathbb{R}^n)$ to $\Omega$ and $N_{k,p}^{\Omega}$ denote its kernel, then this operator induces a decomposition of $W^{k,\chi}_p(\mathbb{R}^n)$ into equivalent classes
 whereby any two distinct members of $W^{k,\chi}_p(\mathbb{R}^n)$, say $\overline{u}^1$ and $\overline{u}^2$ are said to be equivalent
 if $\overline{u}^1 - \overline{u}^2 \in N_{k,p}^{\Omega}$.
Hence if we denote the induced quotient space by $W^{k,\chi}_p(\mathbb{R}^n)/N_{k,p}^{\Omega}$
and equip it with its quotient space norm, then it is clear that we can identify $W^{k,\chi}_p(\Omega)$ with $W^{k,\chi}_p(\mathbb{R}^n)/N_{k,p}^{\Omega}$
(in the sense that they are isometrically isomorphic to each other). We mention at this point that if $N$ is a subspace of a linear vector space
$Y$ and $X = Y/N$ denotes the corresponding quotient space, then in the sequel we will use the notation $[u]$ to denote the member of $X$ containing $u \in Y$
and $\norm \cdot\norm _X$ to denote the quotient space norm in $X$.

Next let $\mathcal{N}_{k,p}^{\Omega} = \bigl\{\,\overline{v} \in W^k_p(\mathbb{R}^n) \;\textrm{such
that}\; \overline{v} =\mathscr F^{-1}\chi\,\mathscr F\overline{u}\; \textrm{for}\; \overline{u} \in N^{\Omega}_{k,p}\,\bigr\}$
and define the space $W^k_p(\mathbb{R}^n)/\mathcal{N}_{k,p}^{\Omega}$ in an analogous fashion to the way we defined the space $W^{k,\chi}_p(\mathbb{R}^n)/N_{k,p}^{\Omega}$.
 \begin{proposition} \label{P:2.12}
  It is the case that $W^{k,\chi}_p(\Omega)$ is isometrically isomorphic to $W^k_p(\Omega)$.
   \end{proposition}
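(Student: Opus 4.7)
The plan is to exhibit the isomorphism as a chain built on Proposition \ref{P:2.10}. First, realize $W^{k,\chi}_p(\Omega)$ as the quotient Banach space $W^{k,\chi}_p(\mathbb{R}^n)/N^\Omega_{k,p}$; then transport that quotient structure to $W^k_p(\mathbb{R}^n)/\mathcal{N}^\Omega_{k,p}$ via the operator $\mathscr{F}^{-1}\chi\mathscr{F}$; and finally identify the latter with $W^k_p(\Omega)$ endowed with its Bessel-potential norm.

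The first link is immediate from Definition \ref{D:2.11}: the restriction operator $r:W^{k,\chi}_p(\mathbb{R}^n)\to W^{k,\chi}_p(\Omega)$ is, by construction, a bounded linear surjection with kernel $N^\Omega_{k,p}$, and its associated quotient norm coincides with the infimum norm used to define $W^{k,\chi}_p(\Omega)$. The closedness of $N^\Omega_{k,p}$ follows from the continuity of restriction from $\mathscr{S}^{\prime}(\mathbb{R}^n)$ to $\mathscr{D}^{\prime}(\Omega)$. For the second link, Proposition \ref{P:2.10} supplies an isometric isomorphism $T=\mathscr{F}^{-1}\chi\mathscr{F}:W^{k,\chi}_p(\mathbb{R}^n)\to W^k_p(\mathbb{R}^n)$, and by the very definition of $\mathcal{N}^\Omega_{k,p}$ one has $T(N^\Omega_{k,p})=\mathcal{N}^\Omega_{k,p}$. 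A standard quotient-space argument then shows that $T$ descends to an isometric isomorphism of the quotients
\[
W^{k,\chi}_p(\mathbb{R}^n)/N^\Omega_{k,p}\;\cong\;W^k_p(\mathbb{R}^n)/\mathcal{N}^\Omega_{k,p},
\]
and combining with the first link gives $W^{k,\chi}_p(\Omega)\cong W^k_p(\mathbb{R}^n)/\mathcal{N}^\Omega_{k,p}$ isometrically, with constants independent of $\lambda$ as demanded in Remark \ref{R:2.9}.

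The third link is where I expect the main obstacle. Under the convention stated just before the proposition, $W^k_p(\Omega)$ carries its Bessel-potential norm, so it is isometric to the quotient of $W^k_p(\mathbb{R}^n)$ by the kernel $\widetilde{N}^\Omega_{k,p}$ of the ordinary restriction map. Identifying the two quotients $W^k_p(\mathbb{R}^n)/\mathcal{N}^\Omega_{k,p}$ and $W^k_p(\mathbb{R}^n)/\widetilde{N}^\Omega_{k,p}$ is delicate because $T$ is a non-local operator, so it is not a priori clear that the image under $T$ of a function vanishing on $\Omega$ vanishes on $\Omega$ (nor the converse). The most promising route is to verify directly that the assignment $[\bar{v}]\mapsto \bar{v}|_{\Omega}$ descends to a well-defined isometric isomorphism from $W^k_p(\mathbb{R}^n)/\mathcal{N}^\Omega_{k,p}$ onto $W^k_p(\Omega)$: for well-definedness and continuity, approximate a general $\bar{u}\in W^{k,\chi}_p(\mathbb{R}^n)$ by Schwartz functions using the density furnished by Proposition \ref{P:2.5}, and pass to the limit using the Mikhlin-multiplier estimates underlying Proposition \ref{P:2.10}; surjectivity and the matching of norms then follow by taking infima over extensions in both spaces and invoking the isometry of $T$ on $W^{k,\chi}_p(\mathbb{R}^n)$.
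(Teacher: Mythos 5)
Your first two links are essentially the paper's own argument: the paper likewise identifies $W^{k,\chi}_p(\Omega)$ with the quotient $W^{k,\chi}_p(\mathbb{R}^n)/N^{\Omega}_{k,p}$ and then shows, by a minimizing-sequence computation that amounts to your ``standard quotient-space argument'', that $\mathscr F^{-1}\chi\,\mathscr F$ descends from Proposition \ref{P:2.10} to an isometric isomorphism of $W^{k,\chi}_p(\mathbb{R}^n)/N^{\Omega}_{k,p}$ onto $W^{k}_p(\mathbb{R}^n)/\mathcal{N}^{\Omega}_{k,p}$. For your third link, however, the paper offers no argument at all: it simply asserts at the outset that $W^k_p(\Omega)$ is isometrically isomorphic to $W^k_p(\mathbb{R}^n)/\mathcal{N}^{\Omega}_{k,p}$. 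You are right to single this out as the delicate point.

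The route you sketch for that third link does not work. For $[\bar v]\mapsto \bar v|_{\Omega}$ to be well defined on $W^k_p(\mathbb{R}^n)/\mathcal{N}^{\Omega}_{k,p}$ you need $\mathcal{N}^{\Omega}_{k,p}$ to be contained in the kernel of the restriction map, i.e.\ that $\mathscr F^{-1}\chi\,\mathscr F\bar u$ vanishes on $\Omega$ whenever $\bar u$ does. That is precisely the locality you yourself note is absent: unless $\chi$ is a polynomial, $\mathscr F^{-1}\chi\,\mathscr F$ does not preserve supports (for instance $\chi(\xi)=\langle\xi\rangle^{t}$ with $t$ not an even integer gives $(1-\Delta)^{t/2}$, whose distribution kernel is not concentrated on the diagonal), so elements of $\mathcal{N}^{\Omega}_{k,p}=\mathscr F^{-1}\chi\,\mathscr F\,N^{\Omega}_{k,p}$ need not restrict to zero on $\Omega$ and the map on the quotient is not well defined. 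Density of $\mathscr S(\mathbb{R}^n)$ and Mikhlin-type estimates cannot repair this, since they yield global norm bounds and carry no support information; moreover, even if well-definedness held, isometry onto $W^k_p(\Omega)$ (whose norm is the quotient norm by the kernel of restriction) would force $\mathcal{N}^{\Omega}_{k,p}$ to coincide with that kernel, an even stronger statement. So your proposal has a genuine gap exactly at the step where the quoted proof itself consists only of an unproved assertion: identifying $W^k_p(\mathbb{R}^n)/\mathcal{N}^{\Omega}_{k,p}$ with $W^k_p(\Omega)$ (equivalently, relating $\mathcal{N}^{\Omega}_{k,p}$ to the kernel of restriction in $W^k_p(\mathbb{R}^n)$) requires an additional idea that neither your argument nor the paper's supplies.
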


\begin{proof}
 Since $W^k_p(\Omega)$ is isometrically isomorphic to $W^k_p(\mathbb{R}^n)/\mathcal{N}_{k,p}^{\Omega}$ (equipped with its quotient space
 norm), the proposition will be proved
 if we can show that $W^{k,\chi}_p(\mathbb{R}^n)/N_{k,p}^{\Omega}$ is isometrically isomorphic to $W^k_p(\mathbb{R}^n)/\mathcal{N}_{k.p}^{\Omega}$.
 Accordingly, let $[\overline{u}] \in W^{k,\chi}_p(\mathbb{R}^n)/N_{k,p}^{\Omega}$ and let $\{\,\overline{u}_{\ell}\,\}_1^{\infty}$ be a
 sequence in $W^{k,\chi}_p(\mathbb{R}^n)$
 such that $\overline{u} - \overline{u}_{\ell} \in N_{k,p}^{\Omega}$ and $\lim_{\ell \to \infty}\norm \overline{u}_{\ell}\norm _{k,p,\mathbb{R}^n}^{\chi} =$
 $\norm [\overline{u}]\norm \bigl|_{W^{k,\chi}_p(\mathbb{R}^n)/N_{k,p}^{\Omega}}$.
 Hence if we let $\overline{v} = \mathscr F^{-1}\chi\,\mathscr F\,\overline{u}$ and $\overline{v}_{\ell} = \mathscr F^{-1}\chi\,\mathscr F\,\overline{u}_{\ell}$, then it follows from Proposition \ref{P:2.10}
 that $\norm [\overline{u}]\norm _{W^{k,\chi}_p(\mathbb{R}^n)/N_{k,p}^{\Omega}} =
 \lim_{\ell \to \infty}\norm \overline{v}_{\ell}\norm _{k,p,\mathbb{R}^n} \geq \norm [\overline{v}]\norm _{W^k_p(\mathbb{R}^n)/\mathcal{N}_{k,p}^{\Omega}}$.
 Since similar arguments show that  \\
 $\norm [\overline{u}]\norm _{W^{k,\chi}_p(\mathbb{R}^n)/N_{k,p}^{\Omega}}
 \leq
 \norm [\overline{v}]\norm _{W^k_p(\mathbb{R}^n)/\mathcal{N}_{k,p}^{\Omega}}$,
 the proof of the proposition is complete.
\end{proof}

   Next for $k \in \mathbb{N}, k  \leq 2m,$ let $\gamma_{k,p}^{\dagger}$ (resp. $\overline{\gamma}_{k,p}^{\dagger}$) denote the trace operator mapping $W^k_p(\Omega)$ (resp.$W^k_p(\mathbb{R}^n))$
 onto $W^{k-1/p}_p(\Gamma)$ and let $\mathcal{N}_{k,p}$ (resp. $\overline{\mathcal{N}}_{k,p}$) denote its kernel. Then this operator induces a decomposition
 of $W^k_p(\Omega)$ (resp. $W^k_p(\mathbb{R}^n)$) into equivalent classes whereby any two distinct members of $W^k_p(\Omega)$ (resp. $W^k_p(\mathbb{R}^n)$),
 say $u^1$ and $u^2$ (resp. $\overline{u}^1$ and $\overline{u}^2$) are said to be equivalent
 if $u^1 - u^2 \in \mathcal{N}_{k,p}$ (resp. $\overline{u}^1 - \overline{u}^2 \in \overline{\mathcal{N}}_{k,p}$). Note that if we denote the induced
 quotient space by $W^k_p(\Omega)/\mathcal{N}_{k,p}$ (resp. $W^k_p(\mathbb{R}^n)/\overline{\mathcal{N}}_{k,p}$) and equip it with its quotient space norm,
 then it follows from \cite[Propositions 2.2, 2.3]{ADF} that $W^k_p(\Omega)/\mathcal{N}_{k,p}$ (resp $W^k_p(\mathbb{R}^n)/\overline{\mathcal{N}}_{k,p}$) is isomorphic to
 $W^{k-1/p}_p(\Gamma)$.

  We now denote by $V_{k,p}$ (resp. $\overline{V}_{k,p}$) the operator mapping $W^k_p(\Omega)$ (resp. $W^k_p(\mathbb{R}^n)$) isometrically and isomorphically onto
 $W^{k,\chi}_p(\Omega)$ (resp. $W^{k,\chi}_p(\mathbb{R}^n)$) which is asserted in Proposition \ref{P:2.12} (resp.\,Proposition \ref{P:2.10}) and put
 $N_{k,p} = V_{k,p}\mathcal{N}_{k,p}$ (resp. $\overline{N}_{k,p} = \overline{V}_{k,p}\,\overline{\mathcal{N}}_{k,p}$). Then the decomposition
 of $W^k_p(\Omega)$ (resp. $W^k_p(\mathbb{R}^n)$) into equivalent classes induces a decomposition of $W^{k,\chi}_p(\Omega)$ (resp. $W^{k,\chi}_p(\mathbb{R}^n)$) into equivalent classes
 whereby any two distinct members of $W^{k,\chi}_p(\Omega)$ (resp. $W^{k,\chi}_p(\mathbb{R}^n)$) , say $u^1$ and $u^2$ (resp.
$\overline{u}^1$ and $\overline{u}^2$   are said to be equivalent if $u^1 - u^2 \in N_{k,p}$ (resp. $\overline{u}^1 - \overline{u}^2 \in \overline{N}_{k.p}$).
We denote the induced equivalent space by $W^{k,\chi}_p(\Omega)/N_{k,p}$ (resp. $W^{k,\chi}_p(\mathbb{R}^n)/\overline{N}_{k,p})$ and equip it
with its quotient space norm.

\begin{proposition} \label{P:2.13}
 It is the case that the spaces $W^{k,\chi}_p(\Omega)/N_{k,p},  W^{k,\chi}_p(\mathbb{R}^n)/\overline{N}_{k,p}$, and $W^{k-1/p}_p(\Gamma)$ are isomorphic to each other.
\end{proposition}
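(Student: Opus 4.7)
The plan is to deduce the proposition directly from the three building blocks already assembled: (i) the isometric isomorphisms $V_{k,p}\colon W^k_p(\Omega)\to W^{k,\chi}_p(\Omega)$ and $\overline V_{k,p}\colon W^k_p(\mathbb R^n)\to W^{k,\chi}_p(\mathbb R^n)$ of Propositions~\ref{P:2.12} and \ref{P:2.10}, (ii) the fact that $N_{k,p}$ and $\overline N_{k,p}$ are by definition the images of $\mathcal N_{k,p}$ and $\overline{\mathcal N}_{k,p}$ under these isomorphisms, and (iii) the already-cited result from \cite{ADF} that $W^k_p(\Omega)/\mathcal N_{k,p}$ and $W^k_p(\mathbb R^n)/\overline{\mathcal N}_{k,p}$ are each isomorphic to $W^{k-1/p}_p(\Gamma)$. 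The overall strategy is the elementary principle that an isometric isomorphism between Banach spaces which carries a closed subspace onto a closed subspace descends to an isometric isomorphism of the corresponding quotient spaces.

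First I would check that $N_{k,p}$ and $\overline N_{k,p}$ are indeed closed, so that the quotients $W^{k,\chi}_p(\Omega)/N_{k,p}$ and $W^{k,\chi}_p(\mathbb R^n)/\overline N_{k,p}$ are genuine Banach spaces. This is automatic: $\mathcal N_{k,p}$ and $\overline{\mathcal N}_{k,p}$ are the kernels of the bounded trace operators $\gamma^{\dagger}_{k,p}$ and $\overline\gamma^{\dagger}_{k,p}$, hence closed, and $V_{k,p}$ and $\overline V_{k,p}$ are homeomorphisms, so their images $N_{k,p}$ and $\overline N_{k,p}$ remain closed.

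Next I would define the induced maps
\[
\widetilde V_{k,p}\colon W^k_p(\Omega)/\mathcal N_{k,p}\longrightarrow W^{k,\chi}_p(\Omega)/N_{k,p},\qquad [u]\mapsto [V_{k,p}u],
\]
and analogously $\widetilde{\overline V}_{k,p}$ between the corresponding quotients over $\mathbb R^n$. Well-definedness, linearity, and bijectivity follow from $V_{k,p}(\mathcal N_{k,p})=N_{k,p}$ together with the bijectivity of $V_{k,p}$, and standard computation with the quotient norms shows
\[
\norm\widetilde V_{k,p}[u]\norm_{W^{k,\chi}_p(\Omega)/N_{k,p}} = \inf_{z\in\mathcal N_{k,p}}\norm V_{k,p}(u+z)\norm^{\chi}_{k,p,\Omega} = \inf_{z\in\mathcal N_{k,p}}\norm u+z\norm_{k,p,\Omega} = \norm[u]\norm_{W^k_p(\Omega)/\mathcal N_{k,p}},
\]
so $\widetilde V_{k,p}$ is an isometric isomorphism; the same argument applies to $\widetilde{\overline V}_{k,p}$.

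Finally, composing $\widetilde V_{k,p}$ (resp.\ $\widetilde{\overline V}_{k,p}$) with the isomorphism of $W^k_p(\Omega)/\mathcal N_{k,p}$ (resp.\ $W^k_p(\mathbb R^n)/\overline{\mathcal N}_{k,p}$) onto $W^{k-1/p}_p(\Gamma)$ supplied by \cite[Propositions~2.2, 2.3]{ADF} yields the desired isomorphisms with $W^{k-1/p}_p(\Gamma)$, and transitivity gives the isomorphism between $W^{k,\chi}_p(\Omega)/N_{k,p}$ and $W^{k,\chi}_p(\mathbb R^n)/\overline N_{k,p}$. There is no serious obstacle here; the only point that requires any care is the parameter-uniformity expressed in Remark~\ref{R:2.9}, which is inherited automatically since $V_{k,p}$ and $\overline V_{k,p}$ are isometries and the isomorphism from \cite{ADF} involves no $\lambda$.
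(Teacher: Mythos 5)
Your proposal is correct and follows essentially the same route as the paper: the paper likewise reduces the claim to showing that $W^{k,\chi}_p(\Omega)/N_{k,p}$ and $W^{k,\chi}_p(\mathbb{R}^n)/\overline{N}_{k,p}$ are isomorphic to $W^k_p(\Omega)/\mathcal{N}_{k,p}$ and $W^k_p(\mathbb{R}^n)/\overline{\mathcal{N}}_{k,p}$ via the isometries $V_{k,p}$, $\overline{V}_{k,p}$ (arguing as in Proposition~\ref{P:2.12}) and then invokes the \cite{ADF} identification with $W^{k-1/p}_p(\Gamma)$. Your only deviation is presentational: you compute the equality of quotient norms directly from $V_{k,p}\mathcal{N}_{k,p}=N_{k,p}$ rather than via the minimizing sequences used in the proof of Proposition~\ref{P:2.12}, which is an equivalent and slightly cleaner way of descending the isometry to the quotients.
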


\begin{proof}
 In light of what was shown above, it is clear that in order to prove the proposition we need only prove that
$W^{k,\chi}_p(\Omega)/N_{k,p}$ (resp. $W^{k,\chi}_p(\mathbb{R}^n)/\overline{N}_{k,p}$) is isomorphic to $W^k_p(\Omega)/\mathcal{N}_{k,p}$
 (resp.$W^k_p(\mathbb{R}^n)/\overline{\mathcal{N}}_{k.p}$). But for this, we can argue as we did in the proof of Proposition \ref{P:2.12}.
\end{proof}

\begin{definition} \label{D:2.14}
We let $W^{k-1/p,\chi}_p(\Gamma) =  W^{k,\chi}_p(\Omega)/N_{k,p}$ and denote the norm in this space by $\norm \cdot\norm ^{\chi}_{k-1/p.p,\Gamma}$,
where for $[u] \in W^{k-1/p,\chi}_p(\Gamma)$,
\[ \norm [u]\norm ^{\chi}_{k-1/p,p,\Gamma} =
\norm [u]\norm _{W^{k,\chi}_p(\Omega)/N_{k,p}}.\]
\end{definition}

 Finally, in the sequel we denote by $\gamma^{\chi}_{k,p}$ the trace operator mapping $W^{k,\chi}_p(\Omega)$ onto $W_p^{k-1/p,\chi}(\Gamma) = W^{k,\chi}_p(\Omega)/N_{k,p}$.
 We shall also use the symbol $\gamma_{k,p}$ to denote the trace operator mapping $W^k_p(\Omega)$ onto $W^k_p(\Omega)/\mathcal{N}_{k,p}$.

\section{The boundary problem \eqref{E:1.1}, \eqref{E:1.2}} \label{S:3}

In this section we are going to use the results of Section~\ref{S:2} in order to establish our main results concerning the existence and uniqueness of solutions of the boundary problem \eqref{E:1.1}, \eqref{E:1.2}. To this end we require some further information.

\begin{assumption}\label{A:3.1}
  It will henceforth be supposed that
  \begin{enumerate}
    [(1)]
    \item the boundary $\Gamma$ is of class $C^{2(m+(n-1)^++n^+)+1}$,
    \item $a_\alpha\in C^{2(m+(n-1)^++n^+)+1}(\bar\Omega)$ for $|\alpha|\le 2m$, and
    \item $b_{j,\alpha}\in C^{2(m_j^{\#}+(n-1)^+ +n^+)+1}(\Gamma)$ for $|\alpha|\le m_j,\, j=1,\dots,m$ where $m_j^{\#}=m-m_j/2$ if $m_j$ is even and $m_j^{\#}=(2m-m_j)^+$ otherwise.
  \end{enumerate}
\end{assumption}

\begin{remark}
  \label{R:3.2}
  It follows from a standard extension procedure that there is no loss of generality in supposing henceforth that for each $\alpha$ and $j$, $a_\alpha\in C^{2(m+(n-1)^++n^+)+1}(\R^n)$, $b_{j,\alpha}\in C^{2(m_j^{\#}+(n-1)^+ +n^+)+1}(\R^n)$ and have compact support.
\end{remark}

In the sequel we let $\mathring A(x,D)$ (resp., $\mathring B_j(x,D)$) denote the principal part of $A(x,D)$ (resp., $B_j(x,D)$, $j=1,\dots,m$).

\begin{definition}
  \label{D:3.3}
  Let $\mathcal L$ be a closed sector in the complex plane with vertex at the origin. Then we say that the boundary problem \eqref{E:1.1}, \eqref{E:1.2} is parameter-elliptic in $\mathcal L$ if the following two conditions are satisfied:
  \begin{enumerate}
    [(1)]
    \item $\mathring A(x,\xi)-\lambda\not=0$ for $x\in\bar\Omega,\,\xi\in\R^n$, and $\lambda\in\mathcal L$ if $|\xi|+|\lambda|\not=0$;
    \item let $x^0$ be an arbitrary point in $\Gamma$. Assume that the boundary problem \eqref{E:1.1}, \eqref{E:1.2} is rewritten in a local coordinate system associated with $x^0$ wherein $x^0\to 0$ and $\nu\to e_n$, where $\nu$ denotes the interior normal to $\Gamma$ at $x^0$ and $(e_1,\dots,e_n)$ denotes the standard basis in $\R^n$. Then the boundary problem on the half-line
        \begin{align*}
          \mathring A(0,\xi',D_n)v(t) - \lambda v(t) & = 0 \quad\text{for }t=x_n>0,\\
          \mathring B_j(0,\xi',D_n)v(t) & = 0 \quad\text{for }t=0,\, j=1,\dots,m,\\
          v(t)&\to 0\quad\text{as } t\to\infty
        \end{align*}
        has only the trivial solution for $\xi'\in\R^{n-1}$ and $\lambda\in\mathcal L$ if $|\xi'|+|\lambda|\not=0$.
  \end{enumerate}
\end{definition}

We denote by $E$ the strong $\big( 2(m+(n-1)^++n^+)+1\big)$-extension operator mapping $W_p^{2(m+(n-1)^++n^+)+1}(\Omega)$ into $W_p^{2(m+(n-1)^++n^+)+1}(\R^n)$ (see \cite[p.~83]{Ad} for details) and for $v_{2m}\in W_p^{2m}(\Omega)$ let us put $v_{2m}^E = Ev_{2m}$ and $u_{2m}^E = \mathscr F^{-1} \chi^{-1}   \mathscr F v_{2m}^E$. In the following proposition we denote transpose by $\mbox{\;}^\top$.

\begin{proposition}
  \label{P:3.4}
  Suppose that the boundary problem \eqref{E:1.1}, \eqref{E:1.2} is parameter-elliptic in $\mathcal L$. Suppose also that $\lambda\in\mathcal L$ with $|\lambda|\ge \lambda_0>0$, $u\in W_p^{2m,\chi}(\Omega)$, and that $f$ and $g=(g_1,\dots,g_m)^\top$ are defined by \eqref{E:1.1} and \eqref{E:1.2}, respectively. Then $f\in W_p^{0,\chi}(\Omega)$, $g_j\in W_p^{2m-m_j-\frac 1p,\chi}(\Gamma)$ for $j=1,\dots,m$, and
  \[ \norm f\norm_{0,p,\Omega}^\chi + \sum_{j=1}^m \norm g_j\norm_{2m-m_j-\frac 1p,p,\Gamma}^\chi \le C \norm u\norm _{2m,p,\Omega}^\chi,\]
  where the constant  $C$ does not depend upon $u$ and $\lambda$.
\end{proposition}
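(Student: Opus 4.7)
The plan is to transfer the estimates from generalized Sobolev spaces to ordinary Sobolev spaces via the isomorphism $\mathscr F^{-1}\chi\mathscr F$ supplied by Propositions~\ref{P:2.10} and \ref{P:2.12}, then apply classical parameter-dependent estimates together with Proposition~\ref{P:2.7} to control multiplication by the variable coefficients.

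First I would fix an extension $\bar u\in W^{2m,\chi}_p(\mathbb{R}^n)$ of $u$ with $\norm\bar u\norm^\chi_{2m,p,\mathbb{R}^n}\le 2\norm u\norm^\chi_{2m,p,\Omega}$ (possible by Definition~\ref{D:2.11}) and set $v=\mathscr F^{-1}\chi\mathscr F\bar u\in W^{2m}_p(\mathbb{R}^n)$. Proposition~\ref{P:2.10} gives $\norm v\norm_{2m,p,\mathbb{R}^n}\le C\norm\bar u\norm^\chi_{2m,p,\mathbb{R}^n}$ with $C$ independent of $\lambda$, and since $\mathscr F^{-1}\chi\mathscr F$ commutes with each $D^\alpha$, we have $\mathscr F^{-1}\chi\mathscr F(D^\alpha\bar u)=D^\alpha v$. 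By Remark~\ref{R:3.2} I may assume each $a_\alpha$ and $b_{j,\alpha}$ lies in $C^\infty_0(\mathbb{R}^n)$, so that Proposition~\ref{P:2.7} becomes applicable both for the weight $\chi$ itself and for the weights $\langle\cdot\rangle^{2m-m_j}\chi$ (which again lie in $\mathcal K_0\cup\mathcal K_1$).

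For the interior contribution I write $A(x,D)\bar u-\lambda\bar u=\sum_{|\alpha|\le 2m}a_\alpha D^\alpha\bar u-\lambda\bar u$. Proposition~\ref{P:2.7} gives
\[\Vert a_\alpha D^\alpha\bar u\Vert^\chi_{0,p,\mathbb{R}^n}\le C\Vert D^\alpha\bar u\Vert^\chi_{0,p,\mathbb{R}^n}=C\Vert D^\alpha v\Vert_{0,p,\mathbb{R}^n}\le C\norm v\norm_{2m,p,\mathbb{R}^n},\]
while $|\lambda|\Vert\bar u\Vert^\chi_{0,p,\mathbb{R}^n}=|\lambda|\Vert v\Vert_{0,p,\mathbb{R}^n}\le\norm v\norm_{2m,p,\mathbb{R}^n}$. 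Summing over $\alpha$ and restricting to $\Omega$ yields the bound on $\norm f\norm^\chi_{0,p,\Omega}$. For the boundary terms, I estimate $B_j\bar u=\sum_{|\alpha|\le m_j}b_{j,\alpha}D^\alpha\bar u$ in $W^{2m-m_j,\chi}_p(\mathbb{R}^n)$: Proposition~\ref{P:2.7} applied to the weight $\langle\cdot\rangle^{2m-m_j}\chi$ reduces matters to bounding $\Vert\mathscr F^{-1}\langle\cdot\rangle^{2m-m_j}\xi^\alpha\mathscr F v\Vert_{0,p}=\Vert D^\alpha v\Vert_{2m-m_j,p}$ together with its $|\lambda|^{(2m-m_j)/2m}$-weighted counterpart; the first is controlled by $\Vert v\Vert_{2m,p}$ via the embedding $W^{2m}_p\subset W^{2m-m_j+|\alpha|}_p$, and the second by the standard parameter-dependent interpolation $|\lambda|^{(2m-|\alpha|)/2m}\Vert D^\alpha v\Vert_{0,p}\le C\norm v\norm_{2m,p}$ (see \cite{GK}). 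Setting $g_j=\gamma^\chi_{2m-m_j,p}\bigl((B_j\bar u)|_\Omega\bigr)$ and invoking Definition~\ref{D:2.14}, which identifies $W^{2m-m_j-1/p,\chi}_p(\Gamma)$ with a quotient of $W^{2m-m_j,\chi}_p(\Omega)$, translates this into the required bound on $\norm g_j\norm^\chi_{2m-m_j-1/p,p,\Gamma}$.

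The main obstacle will be the bookkeeping of the parameter-dependent norms in the boundary estimate: one must verify that for $|\alpha|\le m_j$ the factor $|\lambda|^{(2m-m_j)/2m}$ appearing in the parameter-dependent norm on $W^{2m-m_j,\chi}_p$ is absorbed by $|\lambda|^{(2m-|\alpha|)/2m}$ supplied by interpolation, which follows from $2m-m_j\le 2m-|\alpha|$ combined with the hypothesis $|\lambda|\ge\lambda_0>0$. A secondary technical point is that the additional hypothesis on $\chi$ used in Proposition~\ref{P:2.7} must persist under multiplication of the weight by $\langle\cdot\rangle^{2m-m_j}$; this reduces to elementary estimates on ratios of $\langle\cdot\rangle$. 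Observe that parameter-ellipticity itself is not needed for this proposition—only the regularity hypotheses of Assumption~\ref{A:3.1} and the multiplier property enter.
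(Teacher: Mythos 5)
Your overall scheme (transfer to classical Sobolev spaces via $\mathscr F^{-1}\chi\mathscr F$, estimate there, pass to the quotient norm for the traces) is the same as the paper's, and your bookkeeping of the parameter factors $|\lambda|^{(2m-m_j)/2m}$ versus $|\lambda|^{(2m-|\alpha|)/2m}$ is correct. The gap is in the one step that carries all the difficulty: bounding multiplication by the variable coefficients on $H^{\chi}_p(\R^n)$ and on $W^{2m-m_j,\chi}_p(\R^n)$. You justify this solely by Proposition~\ref{P:2.7}, but that proposition is not available under the hypotheses of Proposition~\ref{P:3.4}: (i) it is stated ``under a further assumption on $\chi$'', namely $\bigl|\xi^{\alpha}D^{\alpha}_{\xi}\bigl(\chi(\xi+\eta)\chi(\xi)^{-1}\bigr)\bigr|\le c_{\chi}\langle\eta\rangle^{k_{\chi}}$, which is nowhere assumed in Section~\ref{S:3}, so your argument proves the proposition only for a restricted subclass of weights; and (ii) it requires $\phi\in C^{\infty}_0(\R^n)$, whereas Remark~\ref{R:3.2} only lets you assume $a_{\alpha}\in C^{2(m+(n-1)^++n^+)+1}(\R^n)$ and $b_{j,\alpha}\in C^{2(m_j^{\#}+(n-1)^++n^+)+1}(\R^n)$ with compact support --- it does not upgrade finite smoothness to $C^{\infty}$, and the finite smoothness is essential to the setting of the paper. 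Mollifying the coefficients does not repair this, since the error $(a_{\alpha}-a_{\alpha}^{\varepsilon})D^{\alpha}\bar u$ would again have to be estimated in $W^{0,\chi}_p$, i.e.\ by exactly the multiplier bound that is in question; moreover even a finitely smooth version of Proposition~\ref{P:2.7} would need roughly $k_{\chi}+n$ derivatives of the coefficient, and $k_{\chi}$ is not controlled by Assumption~\ref{A:3.1}.

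The paper avoids this by never multiplying in the $\chi$-weighted space at all: it regards $a_{\alpha}(y)D^{\alpha}_y$ (and $b_{j,\alpha}(y)D^{\alpha}_y$) as a pseudodifferential operator, passes to $x$-form by an oscillatory integral, and splits $\tilde\sigma_{\alpha}(x,\xi)=a_{\alpha}(x)\xi^{\alpha}+\langle\xi\rangle^{2m-1}\sigma^1_{\alpha}(x,\xi)$, so that $\mathscr F^{-1}\chi\mathscr F\,a_{\alpha}D^{\alpha}u^E_{2m}=a_{\alpha}D^{\alpha}v^E_{2m}+I^2_{\alpha}$ with a remainder of order $2m-1$ (resp.\ $m_j-1$). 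The remainder symbol is built from exactly the number of derivatives of the coefficients guaranteed by Assumption~\ref{A:3.1} (via the factor $(1-\Delta_z)^{m+(n-1)^++n^+}D_{z,k}a_{\alpha}$) and is estimated by the parameter-dependent variant of Mikhlin's theorem from \cite{GK}, using only the derivative bounds on $\chi$ contained in the definitions of $\mathcal K_0$, $\mathcal K_1$ and Remark~\ref{R:2.2}. To complete your argument you would either have to add the hypothesis of Proposition~\ref{P:2.7} (and smooth coefficients) to the statement, or replace that step by a commutator/symbol argument of the paper's type. Your closing observation that parameter-ellipticity is not used in this direction is correct and consistent with the paper's proof.
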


\begin{remark}
  \label{R:3.5}
  Proposition~\ref{P:3.4} requires some clarifications since we have not yet defined what we mean by $A(x,D)u$ and $B_j(x,D)u$ for $u\in W_p^{2m,\chi}(\Omega)$. Accordingly, in this paper we consider $A(x,D)$ (resp. $B_j(x,D),\, j=1,\dots,m$) as a pseudodifferential operator defined on $\R^n$ with a non-standard symbol $\sum_{|\alpha|\le 2m} a_\alpha(x)\xi^\alpha$ (resp. $\sum_{|\alpha|\le m_j} b_{j,\alpha}(x)\xi^\alpha,\, j=1,\dots,m$). Then for $u\in W_p^{2m,\chi}(\R^n)$ we can appeal to Proposition~\ref{P:2.10} to show that $A(x,D)$ (resp. $B_j(x,D),\, j=1,\dots,m$) can be represented as a pseudodifferential operator defined on $\R^n$ acting on $v^E = \mathscr F^{-1} \chi\mathscr F u^E$. Thus it is by means of these pseudodifferential operators acting on classical Sobolev spaces and the results of Section~\ref{S:2} that enable us in the proof of the proposition to give meaning to the expressions $A(x,D) u = A(x,D) u^E|_{\Omega}$ (resp. $B_j(x,D)u = B_j(x,D)u^E|_{\Omega}$).
\end{remark}

\begin{proof}[Proof of Proposition~\ref{P:3.4}]
In light of what was said in Section~\ref{S:2} we have $u=u_{2m}\in W_p^{2m,\chi}(\Omega)$ and we have to show firstly that $f=(A(x,D)-\lambda)u_{2m} \in W_p^{0,\chi}(\Omega)$, $B_j(x,D)u_{2m} \in W_p^{2m-m_j,\chi}(\Omega)$, $j=1,\dots,m$, and then obtain estimates for $\norm (A(x,D)-\lambda) u_{2m}\norm_{0,p,\Omega}^\chi$ and for $\norm [B_j(x,D)u_{2m}]\norm_{2m-m_j-\frac1p,p,\Gamma}^\chi$.

Accordingly, let us firstly fix our attention upon the operator $A(x,D) =$ \linebreak $\sum_{ |\alpha|\le 2m} a_\alpha(x)D^\alpha$ for $x\in\R^n$, and for a particular $\alpha$ obtain an estimate for
\[ \norm \mathscr F^{-1}_{\xi\to x} \chi(\xi) \mathscr F_{y\to\xi} a_\alpha(y) D_y^\alpha u_{2m}^E\norm_{0,p,\R^n}.\]
To this end (see Remark~\ref{R:3.5}) we consider the operator $a_\alpha(y)D_y^\alpha$ as a pseudodifferential operator defined on $\R^n$ with symbol $\sigma_\alpha(y,\xi) = a_\alpha(y)\xi^\alpha$. Then for our purposes we need to put $\sigma_\alpha(y,\xi)$ in $x$-form (see \cite[p.~141]{G}). To this end we can appeal to \cite[p.~144]{G} to show that in $x$-form $\sigma_\alpha(y,\xi)$ is given (as an oscillatory integral) by
\[ \tilde\sigma_\alpha(x,\xi) = (2\pi)^{-n} \int_{\R^n}\int_{\R^n} e^{-iz\cdot\zeta} \sigma_\alpha(x-z,\xi-\zeta)dzd\zeta,\]
and hence by arguing as in \cite[proof of Theorem~3.1, pp.~23-25]{S} we have $\tilde \sigma_\alpha(x,\xi)= a_\alpha(x) \xi^\alpha + \langle\xi\rangle^{2m-1} \sigma_\alpha^1(x,\xi)$, where
\begin{align*}
  \sigma_\alpha^1(x,\xi) & = \sum_{k=1}^n \int_{\R^n}\int_{\R^n} e^{-i(x-z)\cdot\zeta}(1-\Delta_z)^{m+(n-1)^++n^+} D_{z,k} a_\alpha(z) \sigma_\alpha^2(\xi,\zeta)dzd\zeta,\\
  \sigma_\alpha^2(\xi,\zeta)  & =  \frac i{(2\pi)^n} \int_{t=0}^1 \langle \zeta\rangle^{-2(m+(n-1)^++n^+)} \langle\xi\rangle^{-(2m-1)} D_{\xi,k}(\xi-t\zeta)^\alpha dt,
\end{align*}
and where $\,\cdot\,$ denotes the scalar product, $\Delta$ denotes the Laplacian over $\R^n$, $D_{z,k} = -i\frac\partial{\partial z_k}$, and $D_{\xi,k} = -i\frac{\partial}{\partial \xi_k}$. Thus we see that
\[ \mathscr F_{\xi\to x}^{-1} \chi(\xi) \mathscr F_{y\to \xi} a_\alpha(y) D_y^\alpha u^E_{2m} = I_\alpha^1(x) + I_\alpha^2(x),\]
where $I_\alpha^1(x) = a_\alpha(x) D^\alpha v_{2m}^E(x)$, $I_\alpha^2(x) = \mathscr F_{\xi\to x}^{-1} \sigma_\alpha^1(x,\xi) \mathscr F_{y\to\xi} w(y)$, $w(y) =$ \linebreak $ \mathscr F_{\eta\to y}^{-1} \langle \eta\rangle^{2m-1} \mathscr F_{z\to \eta} v_{2m}^E$, and $v_{2m}^E = \mathscr F^{-1}\chi \mathscr F u_{2m}^E$. It now follows that $\|I_\alpha^1(x)\|_{0,p,\R^n} \le C_1 \|v_{2m}^E\|_{|\alpha|,p,\R^n}$, while it follows from a variant of Mikhlin's multiplier theorem (see \cite[Theorem~1.6]{GK}) that $\|I_\alpha^2(x)\|_{0,p,\R^n} \le C_2 \|v_{2m}^E\|_{2m-1,p,\R^n}$, where the constants $C_j$ do not depend upon $u_{2m}$.

We conclude from these results that
\begin{align*}
  \norm (A(x,D)-\lambda) u_{2m}\norm_{0,p,\Omega}^\chi & = \norm (A(x,D)-\lambda)u_{2m}^E\big|_{\Omega} \norm_{0,p,\Omega}^\chi \le
  \norm (A(x,D)-\lambda) u_{2m}^E\norm _{0,p,\R^n}^\chi \\
  & \le C_3 \norm v_{2m}^E\norm_{0,p,\R^n} \le C_4 \norm v_{2m}\norm_{2m,p,\Omega}
  = C_4 \norm u_{2m}\norm_{2m,p,\Omega}^\chi,
\end{align*}
where the constants $C_j$ do not depend upon $u_{2m}$ and $\lambda$. This proves the assertion concerning $f$.

Suppose next that $1\le j\le m$ and fix our attention upon the operator $B_j(x,D) = \sum_{|\alpha|\le m_j} b_{j,\alpha}(x) D^\alpha$. Then we are now going to show that $B_j(x,D)u_{2m}^E\in $ \linebreak $W_p^{2m-m_j,\chi}(\R^n)$ and obtain an estimate for its norm. To this end let us fix our attention upon the operator $b_{j,\alpha}(x) D^\alpha$ for a particular $\alpha$. Then by arguing with $b_{j,\alpha}(x)D^\alpha$ as we did with $a_\alpha(x)D^\alpha$ above, we can show that
\[ \mathscr F_{\xi\to x}^{-1} \chi(\xi) \mathscr F_{y\to\xi} b_{j,\alpha}(y) D_y^\alpha u_{2m}^E = I_{j,\alpha}^1(x) + I_{j,\alpha}^2(x),\] where
$I_{j,\alpha}^1(x) = b_{j,\alpha}(x) D^\alpha v_{2m}(x)$, $I_{j,\alpha}^2(x) = \mathscr F_{\xi\to x}^{-1} \langle\xi\rangle^{m_j-1} \sigma_{j,\alpha}^1(x,\xi)\mathscr F_{y\to\xi} w(y)$, $w(y) = \mathscr F_{\eta\to y}^{-1} \langle \eta\rangle^{m_j-1} \mathscr F_{z\to\eta} v_{2m}^E$,
\[
   \sigma_{j,\alpha}^1(x,\xi) =
   \sum_{k=1}^n \int_{\R^n}\int_{\R^n} e^{-i(x-z)\cdot \zeta} (1-\Delta_z)^{m_j^\#+(n-1)^++n^+} D_{z,k} b_{j,\alpha}(z) \sigma_{j,\alpha}^2(\xi,\zeta) dz d\zeta,
\]
and
\[ \sigma_{j,\alpha}^2(\xi,\zeta) = \frac{i}{(2\pi)^n} \int_{t=0}^1 \langle \zeta\rangle^{-2(m_j^\#+(n-1)^++n^+)} \langle \xi\rangle^{-(m_j-1)} D_{\xi,k}(\xi-t\zeta)^\alpha dt.\]
It now follows that
\begin{align*}
\norm I_{j,\alpha}^1(x)\norm_{2m-m_j,p,\R^n}& \le C_5 \norm v_{2m}^E\norm_{2m-m_j+|\alpha|, p,\R^n}, \\
\norm I_{j,\alpha}^2(x)\norm_{2m-m_j,p,\R^n} & \le C_6 \norm v^E\norm_{2m-1,p,\R^n},
\end{align*}
and hence that
\[ \norm B_j(x,D) u_{2m}^E\norm_{2m-m_j,p,\R^n}^\chi \le C_7 \norm v_{2m}^E\norm_{2m,p,\R^n} \le C_8 \norm u_{2m}\norm_{2m,p,\Omega}^\chi,\]
where the constants $C_j$ do not depend upon $u_{2m}$ and $\lambda$. On the other hand, we know from Section~\ref{S:2} and \cite[Proposition~2.2]{ADF} that $\norm [B_j(x,D) u_{2m}]\norm_{2m-m_j-1/p,p,\Gamma}^\chi \le C_{9} \norm B_j(x,D) u_{2m}^E\norm_{2m-m_j,p,\R^n}^\chi$, where the constant $C_{9}$ does not depend upon $u$ and $\lambda$. In light of these results, the proof of the proposition is complete.
\end{proof}

A sort of converse to Proposition~\ref{P:3.4} is given by the following proposition.

\begin{proposition}
  \label{P:3.6}
  Suppose that the boundary problem \eqref{E:1.1}, \eqref{E:1.2} is parameter-elliptic in $\mathcal L$. Suppose also that $u\in W_p^{2m,\chi}(\Omega)$ and that $f$ and $g=(g_1,\dots,g_m)^\top$ are defined by \eqref{E:1.1} and \eqref{E:1.2}, respectively. Then $f\in W_p^{0,\chi}(\Omega)$, $g_j\in W_p^{2m-m_j-1/p,\chi}(\Gamma)$ for $j=1,\dots,m$, and there exists a constant $\lambda^0=\lambda^0(p)>0$ such that for $\lambda\in\mathcal L$ with $|\lambda|\ge\lambda^0$, the a priori estimate
  \begin{equation}\label{E:3.1new}
   \norm u\norm_{2m,p,\Omega}^\chi \le C\Big( \norm f\norm_{0,p,\Omega}^\chi + \sum_{j=1}^m \norm g_j\norm_{2m-m_j-1/p,p,\Gamma}^\chi\Big)
   \end{equation}
  holds, where the constant $C$ does not depend upon $u$ and $\lambda$.
\end{proposition}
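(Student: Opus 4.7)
My strategy is to reduce the desired estimate to the classical a priori estimate for parameter-elliptic boundary problems on the ordinary Sobolev space $W_p^{2m}(\Omega)$, using the operator $\mathscr F^{-1}\chi\mathscr F$ as a uniform-in-$\lambda$ isomorphism between the generalized and classical Sobolev scales.

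Concretely, I extend $u$ to $u^E = Eu \in W_p^{2m,\chi}(\R^n)$ and set $v^E = \mathscr F^{-1}\chi\mathscr F u^E \in W_p^{2m}(\R^n)$, with $v = v^E|_\Omega$. By Propositions~\ref{P:2.10}--\ref{P:2.13} together with Remark~\ref{R:2.9}, the classical norm of $v$, the classical $L_p(\Omega)$-norm of the element $\tilde f$ corresponding to $f$, and the classical boundary norm of the $\tilde g_j$ corresponding to $g_j$, are equivalent with $\lambda$-independent constants to the $\chi$-norms appearing in \eqref{E:3.1new}. The symbol computation already carried out in the proof of Proposition~\ref{P:3.4} then yields on $\R^n$
\begin{align*}
  \mathscr F^{-1}\chi\mathscr F\bigl(A(x,D)u^E\bigr) & = A(x,D) v^E + S_A v^E,\\
  \mathscr F^{-1}\chi\mathscr F\bigl(B_j(x,D)u^E\bigr) & = B_j(x,D) v^E + S_{B_j} v^E,
\end{align*}
where $S_A$ and $S_{B_j}$ are pseudodifferential operators, independent of $\lambda$, of orders at most $2m-1$ and $m_j-1$ respectively, whose boundedness on lower-order Sobolev scales is a consequence of the Mikhlin multiplier theorem invoked there. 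Restricting to $\Omega$ and passing to traces at $\Gamma$, we obtain
\[ (A(x,D)-\lambda)v = \tilde f - (S_A v^E)|_\Omega \quad\text{in } L_p(\Omega), \]
\[ B_j(x,D)v|_\Gamma = \tilde g_j - (S_{B_j} v^E)|_\Gamma \quad\text{in } W_p^{2m-m_j-1/p}(\Gamma).\]

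Next, I would invoke the classical parameter-dependent a priori estimate for parameter-elliptic boundary problems on $W_p^{2m}(\Omega)$, as documented for example in \cite{GK},
\[ \norm v\norm_{2m,p,\Omega} \le C\Bigl(\norm (A(x,D)-\lambda)v\norm_{0,p,\Omega} + \sum_{j=1}^m \norm B_j(x,D)v|_\Gamma\norm_{2m-m_j-1/p,p,\Gamma}\Bigr).\]
Substituting the two identities above produces the desired quantities $\norm\tilde f\norm_{0,p,\Omega}$ and $\sum_j \norm \tilde g_j\norm_{2m-m_j-1/p,p,\Gamma}$ on the right-hand side, plus error terms majorized, by the trace theorem, by a constant multiple of $\|v^E\|_{2m-1,p,\R^n}$. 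The standard parameter-dependent interpolation inequality (see \cite{GK}) gives $\|w\|_{2m-1,p,\R^n} \le C |\lambda|^{-1/2m}\norm w\norm_{2m,p,\R^n}$ for $|\lambda|\ge 1$, so that these errors can be absorbed into the left-hand side provided $|\lambda|\ge\lambda^0$ is chosen sufficiently large. Transferring the inequality back through the isomorphisms of Propositions~\ref{P:2.10}--\ref{P:2.13} yields \eqref{E:3.1new}.

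The main technical obstacle is verifying that the remainder operators $S_A$ and $S_{B_j}$ really are of strictly lower order with $\lambda$-independent bounds in the parameter-dependent norms, and that after taking traces the boundary errors still carry at least one spare power of $\langle\xi\rangle^{-1}$ to be absorbed. This is essentially the parameter-dependent Mikhlin calculus already implemented in the proof of Proposition~\ref{P:3.4}, so no genuinely new ingredient is required beyond careful bookkeeping of the powers of $\lambda$ in the absorption step.
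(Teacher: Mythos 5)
Your proposal is correct and is essentially the paper's own argument: the same decomposition of $\mathscr F^{-1}\chi\mathscr F(A u^E)$ and $\mathscr F^{-1}\chi\mathscr F(B_j u^E)$ into the classical operators applied to $v^E=\mathscr F^{-1}\chi\mathscr F u^E$ plus lower-order remainders (taken from the proof of Proposition~\ref{P:3.4}), the same bound of order $|\lambda|^{-1/(2m)}\norm v\norm_{2m,p,\Omega}$ on those remainders, and the same appeal to the classical parameter-elliptic a priori estimate (the paper uses \cite[Theorem~2.1]{ADF}) followed by absorption for $|\lambda|\ge\lambda^0$. The only difference is presentational: you absorb the error terms directly into the estimate, whereas the paper packages them as the operator $\tilde{\mathcal P}$ and inverts $I+\tilde{\mathcal P}R(\lambda)$ by a Neumann series, which is the same perturbation argument in operator form (and note that, as in the paper, the extension should be taken of $v=V_{2m,p}^{-1}u$ with $u^E:=\mathscr F^{-1}\chi^{-1}\mathscr F Ev$, since $E$ is a priori defined only on the classical Sobolev scale).
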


\begin{proof}
  To begin with we assume that $\lambda\in\mathcal L$ with $|\lambda|\ge \lambda^\dagger$ for some $\lambda^\dagger>0$. Then turning to the proof of Proposition~\ref{P:3.4}, we know from that proof that we have $u=u_{2m}\in W_p^{2m,\chi}(\Omega)$, $f = (A(x,D)-\lambda)u_{2m}\in W_p^{0,\chi}(\Omega)$, and $g_j = [B_j(x,D)u_{2m}]\in W_p^{2m-m_j-1/p,\chi}(\Gamma)$. It was also shown there that with $v_{2m} = V_{2m,p}^{-1} u_{2m}$ (see the text preceding Proposition~\ref{P:2.13}) we have
  \begin{align*}
    \mathscr F_{\xi\to x}^{-1} \chi(\xi) \mathscr F_{y\to\xi} \big(A(y,D_y)-\lambda\big) u_{2m}^E & = \big( A(x,D)-\lambda\big) v_{2m}^E (x) + Qv_{2m}(x),\\
    \mathscr F_{\xi\to x}^{-1}\chi(\xi) \mathscr F_{y\to\xi}B_j(y,D_y)u_{2m}^E & = B_j(x,D)v_{2m}^E(x) + Q_j v_{2m}(x)\quad\text{for } j=1,\dots,m,
  \end{align*}
  where
  \begin{align*}
  Q& =\mathscr F_{\xi\to x}^{-1} \sigma^1(x,\xi)\mathscr F_{y\to\xi}\tilde Q, &&
  Q_j =\mathscr F_{\xi\to x}^{-1} \sigma_j^1(x,\xi)\mathscr F_{y\to\xi} \tilde Q_j, \\
   \sigma^1(x,\xi)   & = \sum_{|\alpha|\le 2m} \sigma_\alpha ^1(x,\xi), && \tilde Qv_{2m} = \mathscr F_{\eta\to y} \langle \eta\rangle^{2m-1} \mathscr F_{z\to\eta} Ev_{2m}, \\
   \text{ and }\sigma_j^1(x,\xi)  & = \sum_{|\alpha|\le m_j} \sigma_{j,\alpha}^1(x,\xi), &&\tilde Q_jv_{2m} = \mathscr F_{\eta\to y}^{-1} \langle \eta \rangle^{m_j-1} \mathscr F_{z\to\eta} Ev_{2m}.
  \end{align*}
  In addition, it follows from the proof of Proposition~\ref{P:3.4} and \cite[Proposition~2.2]{ADF} that
  \begin{align*}
   \norm Qv_{2m}\norm_{0,p,\R^n} + \sum_{j=1}^m \norm Q_j v_{2m}\norm_{2m-m_j,p,\R^n} &  \le C_1 \norm v_{2m}\norm_{2m-1,p,\Omega} \\
   & \le C_2|\lambda|^{-1/(2m)} \norm v\norm_{2m,p,\Omega},
   \end{align*}
  where the constants $C_j$ do not depend upon $u_{2m}$ and $\lambda$.

  Next, let us put $u_0 =(A(x,D)-\lambda)u_{2m}$, $[u_{2m-m_j}] = [B_j(x,D)u_{2m}]$, $j=1,\dots,m$, and denote by $v_0$ (resp. $[v_{2m-m_j}]$) the image of $u_0$ (resp. $[u_{2m-m_j}]$) under the isomorphic mapping of $W_{0,p}^\chi(\Omega)$ onto $W_{0,p}(\Omega)$ (resp. $W_p^{2m-m_j,\chi}(\Omega)/N_{2m-m_j,p}$ onto $W_p^{2m-m_j}(\Omega) / \mathcal N_{2m-m_j,p}$). Let us also denote by $\mathcal P(x,D)-\lambda$ the operator mapping $W_p^{2m}(\Omega)$ into $W_p^0(\Omega)\times\prod_{j=1}^m W_p^{2m-m_j}(\Omega) / \mathcal N_{2m-m_j,p}$ defined by
  \[ \big( \mathcal P(x,D)-\lambda\big) v = \Big\{ (A(x,D)-\lambda)v, \gamma_{2m-m_1,p}B_1(x,D)v,\dots, \gamma_{2m-m_m,p}B_m(x,D)v\Big\}\]
  for $v\in W_p^{2m}(\Omega)$. Then writing $\mathcal P$ for the operator $\mathcal P(x,D)$ we have
  \begin{equation}
    \label{E:3.1}
    (\mathcal P+\tilde{\mathcal P} -\lambda) v_{2m} = (\mathcal P-\lambda)v_{2m} + \tilde{\mathcal P} v_{2m} = \big\{ v_0, [v_{2m-m_1}],\ldots,[v_{2m-m_m}]\big\},
  \end{equation}
  where
  \[ \tilde{\mathcal P} v_{2m} = \big\{ Q_\Omega v_{2m}, \gamma_{2m-m_1,p}(Q_{1,\Omega}v_{2m}),\ldots, \gamma_{2m-m_m,p}(Q_{m,\Omega} v_{2m})\big\}, \]
  $Q_\Omega v_{2m} = Qv_{2m}\big|_{\Omega}$, and $Q_{j,\Omega}v_{2m} = Q_jv_{2m}\big|_{\Omega}$, $j=1,\dots,m$. It is important to observe from what was shown in the previous paragraphs and in the proof of Proposition~\ref{P:3.4} that
  \[ \norm\tilde{\mathcal P}v_{2m}\norm_{W_p^{2m}(\Omega)\to W_p^0(\Omega)\times\prod_{j=1}^m W_p^{2m-m_j}(\Omega)/\mathcal N_{2m-m_j,p}} \le C_3 |\lambda|^{-1/(2m)} \norm v_{2m}\norm_{2m,p,\Omega},\]
  where the constant $C_3$ does not depend upon $u_{2m}$ and $\lambda$. Furthermore, we know from \cite[Theorem~2.1]{ADF} that there exists a constant $\lambda_0 = \lambda_0(p)>0$ such that the set $\{\lambda\in\mathcal L\big|\, |\lambda|\ge\lambda_0\}$ belongs to the resolvent set of $\mathcal P$. Hence if we suppose that $\lambda$ belongs to this set and let $R(\lambda)$ denote the resolvent of $\mathcal P$, then the equation \eqref{E:3.1} can be written as
  \begin{equation}
    \label{E:3.2}
    \big( I+\tilde{\mathcal P} R(\lambda)\big) (\mathcal P-\lambda) v_{2m} = \big\{ v_0,[v_{2m-m_1}],\ldots,[v_{2m-m_m}]\big\}.
  \end{equation}
  On the other hand we know from \cite[Theorem~2.1]{ADF} that
  \[ \norm R(\lambda)\norm_{\{L_p(\Omega), W_p^{2m-m_1}(\Omega)/\mathcal N_{2m-m_1,p},\dots, W_p^{2m-m_m}(\Omega)/\mathcal N_{2m-m_m,p}\}\to W_p^{2m}(\Omega)} \le C_4,\]
  where the constant $C_4$ does not depend upon $\lambda$. Hence if we choose $\lambda^0\ge \lambda^\dagger$ large enough so that $|\lambda|^{-1/(2m)} C_3 C_4\le \frac12$ for $\lambda\in\mathcal L$ with $|\lambda|\ge \lambda^0$, then for $\lambda$ in this set $(I+\tilde{\mathcal P}R(\lambda))$ is a bounded invertible operator on $L_p(\Omega) \times \prod_{j=1}^m W_p^{2m-m_j}(\Omega)/\mathcal N_{2m-m_j,p}$ and the norm of its inverse $(I+\tilde{\mathcal P}R(\lambda))^{-1}$ is bounded by a constant not depending upon $\lambda$. Thus we can write the equation \eqref{E:3.2} in the form
  \begin{equation}
    \label{E:3.3}
    (\mathcal P-\lambda)v_{2m} = \big(I+\tilde{\mathcal P}R(\lambda)\big)^{-1} \big\{ v_0,[v_{2m-m_1}],\ldots, [v_{2m-m_m}]\big\},
  \end{equation}
  and hence it follows from \eqref{E:3.3} and \cite[Theorem~2.1]{ADF} that for $\lambda\in\mathcal L$ with $|\lambda|\ge \lambda^0$ we have
  \begin{equation}
    \label{E:3.4}
    \norm v_{2m}\norm_{2m,p,\Omega} \le C_5\Big( \|v_0\|_{0,p,\Omega} +\sum_{j=1}^m \norm [v_{2m-m_j}]\norm_{2m-m_j-1/p,p,\Gamma}\Big),
  \end{equation}
  where the constant $C_5$ does not depend upon $u_{2m}$ and $\lambda$. The assertion of the proposition is an immediate consequence of this last result and the results of Section~\ref{S:2}.
\end{proof}

We now turn to the question of necessity:

\begin{proposition}
  \label{P:3.7}
  Suppose that for $\lambda\in\mathcal L$ with $|\lambda|\ge\lambda^0$ the a priori estimate \eqref{E:3.1new} holds for every $u\in W_p^{2m,\chi}(\Omega)$, where $f = (A(x,D)-\lambda)u$, $g_j = [B_j(x,D)u]$ for $j=1,\dots, m$, and the constant $C$ does not depend upon $u$ and $\lambda$. Then the boundary problem \eqref{E:1.1}, \eqref{E:1.2} is parameter-elliptic in $\mathcal L$.
\end{proposition}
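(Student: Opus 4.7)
My approach is to transfer the hypothesized a priori estimate from the generalized Sobolev setting back to the classical Sobolev setting and then invoke the necessity of parameter-ellipticity for the classical a priori estimate, which is the converse direction of \cite[Theorem~2.1]{ADF} and is proved by the standard localization/frozen-coefficient argument: failure of condition~(1) of Definition~\ref{D:3.3} at a point $(x^0,\xi^0,\lambda^0)$ yields an oscillatory interior test function that violates the estimate at high frequencies, and failure of condition~(2) at a boundary point yields a boundary-layer test function built from a nontrivial decaying half-line solution.

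To execute the transfer, I would fix an arbitrary $v\in W_p^{2m}(\Omega)$, put $u=V_{2m,p}v\in W_p^{2m,\chi}(\Omega)$, and apply the hypothesis~\eqref{E:3.1new} to $u$. Since $V_{2m,p}$ is an isometric isomorphism by Proposition~\ref{P:2.12}, the left-hand side is just $\norm v\norm_{2m,p,\Omega}$. For the right-hand side I reuse the commutator computations from the proofs of Propositions~\ref{P:3.4} and~\ref{P:3.6}: writing $v^E = Ev$ and $u^E = \mathscr F^{-1}\chi^{-1}\mathscr F v^E$, those proofs give the identity
\[ \mathscr F^{-1}\chi\mathscr F\bigl((A(y,D_y)-\lambda)u^E\bigr) = (A(x,D)-\lambda)v^E + Qv \]
and its analogue for each $B_j$. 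Restricting to $\Omega$ and passing through the isomorphisms of Propositions~\ref{P:2.12} and~\ref{P:2.13} yields
\[ \norm (A-\lambda)u\norm_{0,p,\Omega}^{\chi} \le \norm (A-\lambda)v\norm_{0,p,\Omega} + \norm Q_\Omega v\norm_{0,p,\Omega}, \]
\[ \norm [B_j u]\norm_{2m-m_j-1/p,p,\Gamma}^{\chi} \le \norm [B_j v]\norm_{2m-m_j-1/p,p,\Gamma} + \norm [Q_{j,\Omega}v]\norm_{2m-m_j-1/p,p,\Gamma}, \]
together with the crucial lower-order bound
\[ \norm Q_\Omega v\norm_{0,p,\Omega} + \sum_{j=1}^{m}\norm [Q_{j,\Omega}v]\norm_{2m-m_j-1/p,p,\Gamma} \le C_1|\lambda|^{-1/(2m)}\norm v\norm_{2m,p,\Omega} \]
that was already established during the proof of Proposition~\ref{P:3.6}.

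Combining everything and enlarging $\lambda^0$ if necessary so that $CC_1|\lambda|^{-1/(2m)}\le \tfrac12$ on $\{\lambda\in\mathcal L:|\lambda|\ge\lambda^0\}$, the error term $CC_1|\lambda|^{-1/(2m)}\norm v\norm_{2m,p,\Omega}$ is absorbed into the left-hand side. This produces the classical parameter-dependent a priori estimate
\[ \norm v\norm_{2m,p,\Omega}\le C'\Big(\norm (A-\lambda)v\norm_{0,p,\Omega} + \sum_{j=1}^{m}\norm [B_j v]\norm_{2m-m_j-1/p,p,\Gamma}\Big) \]
for every $v\in W_p^{2m}(\Omega)$ and $\lambda\in\mathcal L$ with $|\lambda|\ge\lambda^0$, whereupon the classical necessity result cited above delivers parameter-ellipticity of~\eqref{E:1.1},~\eqref{E:1.2} in $\mathcal L$. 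The main obstacle, as I see it, is purely bookkeeping: one must verify carefully that the composition $V_{0,p}^{-1}\bigl((A(x,D)-\lambda)V_{2m,p}v\bigr)$ and its boundary analogue really reduce to $(A(x,D)-\lambda)v + Q_\Omega v$ (respectively to $[B_j v + Q_{j,\Omega}v]$), so that the $|\lambda|^{-1/(2m)}$-small remainders from Proposition~\ref{P:3.6} transfer intact to the present setting and can be absorbed.
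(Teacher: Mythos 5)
Your proposal is correct, and its skeleton is the same as the paper's: the paper also reduces Proposition~\ref{P:3.7} to a classical-Sobolev statement via the isomorphisms of Propositions~\ref{P:2.10}--\ref{P:2.13} and the commutator identities from the proofs of Propositions~\ref{P:3.4} and \ref{P:3.6} (your bookkeeping identity $V_{0,p}^{-1}\bigl((A(x,D)-\lambda)V_{2m,p}v\bigr) = (A(x,D)-\lambda)v + Q_\Omega v$ and its boundary analogue are exactly equation~\eqref{E:3.1} there), together with the bound $\norm Qv\norm_{0,p,\R^n} + \sum_j \norm Q_jv\norm_{2m-m_j,p,\R^n}\le C|\lambda|^{-1/(2m)}\norm v\norm_{2m,p,\Omega}$. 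Where you differ is in how the perturbations $Q_\Omega$, $Q_{j,\Omega}$ are disposed of: you absorb them globally into the left-hand side after enlarging $\lambda^0$, obtaining the classical parameter-dependent estimate for the unperturbed problem $(A-\lambda,B_1,\dots,B_m)$, and then appeal to the classical necessity of parameter-ellipticity; the paper instead keeps the perturbed operators in the reduced statement (Proposition~\ref{P:3.8}) and re-derives necessity for them, with Lemma~\ref{L:3.9} localizing the estimate and absorbing both the $Q$-terms and the coefficient-freezing errors for small neighbourhoods and large $|\lambda|$, before running the Agranovich--Vishik argument as in \cite[proof of Theorem 4.2]{AV}. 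Your global absorption is legitimate and streamlines the reduction, but be aware that the paper does not treat \cite[Theorem~2.1]{ADF} as containing the converse direction (it is invoked only for solvability and resolvent bounds), which is why it reproves necessity via Lemma~\ref{L:3.9}; so if the converse is not directly citable in the $L_p$, parameter-dependent form you need, you must still carry out the localization/frozen-coefficient argument you sketch as a fallback --- which, for your absorbed, purely differential estimate, is the standard AV argument. With that caveat, there is no gap.
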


It is clear from the proofs of Proposition~\ref{P:3.4} and \ref{P:3.6} that Proposition~\ref{P:3.7} will be proved if we can show that the following proposition holds.

\begin{proposition}
  \label{P:3.8} Suppose that for $\lambda\in\mathcal L$ with $|\lambda|\ge \lambda^0$ the a priori estimate \eqref{E:3.4} holds for every $v_{2m}\in W_p^{2m}(\Omega)$, where $v_0 = (A(x,D)-\lambda + Q_\Omega) v_{2m}$, $v_{2m-m_j} = \gamma_{2m-m_j,p} (B_j(x,D)+Q_{j,\Omega})v_{2m}$ for $j=1,\dots,m$, and the constant $C_5$ does not depend upon $v_{2m}$ and $\lambda$. Then the boundary problem \eqref{E:1.1}, \eqref{E:1.2} is parameter-elliptic in $\mathcal L$.
\end{proposition}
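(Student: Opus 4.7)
The plan is to adapt the classical Agmon--Douglis--Nirenberg necessity argument to the parameter-elliptic $L_p$-setting. I would argue by contradiction: assuming that one of the two conditions in Definition~\ref{D:3.3} fails, I would construct a family of Agmon-type test functions $v_t$ depending on a large parameter $t$, together with a suitably chosen spectral parameter $\lambda_t\in\mathcal L$, and plug them into \eqref{E:3.4} to obtain a contradiction as $t\to\infty$. The crucial observation is the bound $\norm \tilde{\mathcal P}v_{2m}\norm \le C_3|\lambda|^{-1/(2m)}\norm v_{2m}\norm _{2m,p,\Omega}$ derived in the proof of Proposition~\ref{P:3.6}: for $|\lambda_t|$ large the perturbations $Q_\Omega$ and $Q_{j,\Omega}$ are genuinely lower order and never obstruct the extraction of principal-symbol information from \eqref{E:3.4}.

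For condition~(1), I would fix $x^0\in\Omega$ and suppose, for contradiction, that $\mathring A(x^0,\xi^0)-\lambda^0=0$ for some $(\xi^0,\lambda^0)\in\R^n\times\mathcal L$ with $|\xi^0|+|\lambda^0|\ne 0$. For $t>0$ large I would take $\lambda_t=t^{2m}\lambda^0$ (if $\lambda^0=0$, any fixed $\lambda_t\in\mathcal L$ with $|\lambda_t|\ge\lambda^0$ works) and introduce
\[
v_t(x) = \phi\bigl(t^\kappa(x-x^0)\bigr)\,e^{it\,\xi^0\cdot(x-x^0)},
\]
with $\phi\in C^\infty_0(\R^n)$ equal to $1$ near $0$ and a scaling exponent $\kappa\in(0,1)$. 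A direct computation yields $\norm v_t\norm _{2m,p,\Omega}\asymp t^{2m-\kappa n/p}$, while the principal-symbol cancellation $\mathring A(x^0,t\xi^0)-\lambda_t=0$ forces $(A(x,D)-\lambda_t)v_t$ to be strictly lower order in $t$, the errors coming from differentiations of the cutoff and from freezing the coefficients at $x^0$. Since $v_t$ is compactly supported in $\Omega$, all boundary data vanish. The estimate \eqref{E:3.4} then forces $\norm v_t\norm _{2m,p,\Omega}=o(\norm v_t\norm _{2m,p,\Omega})$, a contradiction.

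For condition~(2), I would fix $x^0\in\Gamma$, pass to the local coordinates of Definition~\ref{D:3.3}, and assume for contradiction that the associated half-line problem admits a nontrivial decaying solution $v^*$ for some $(\xi_0',\lambda^0)\in\R^{n-1}\times\mathcal L$ with $|\xi_0'|+|\lambda^0|\ne 0$. Taking $\lambda_t=t^{2m}\lambda^0$, I would use the test function
\[
v_t(x) = \phi(t^\kappa x')\,\chi(t^\kappa x_n)\,e^{it\,\xi_0'\cdot x'}\,v^*(t x_n),
\]
with suitable tangential and normal cutoffs. By parameter-elliptic scaling, the principal parts of $(A(x,D)-\lambda_t)v_t$ and $B_j(x,D)v_t$ reduce, to leading order, to the data of the half-line problem evaluated on $v^*$, which vanishes; once $\tilde{\mathcal P}v_t$ is absorbed by the $|\lambda_t|^{-1/(2m)}$ factor, \eqref{E:3.4} forces $v^*$ to vanish after renormalization, contradicting its nontriviality.

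The principal obstacle will be the careful bookkeeping of powers of $t$: one must choose $\kappa\in(0,1)$ so that (i) the leading $L_p$-asymptotics of $\norm v_t\norm _{2m,p,\Omega}$ is correctly identified, (ii) the errors from differentiating the cutoffs and from freezing the coefficients of $A$ and $B_j$ at $x^0$ are strictly lower order, (iii) the parameter-dependent boundary norms $\norm [v_{2m-m_j}]\norm _{2m-m_j-1/p,p,\Gamma}$ scale consistently with the interior norm (for condition~(2) the $v^*$ profile is essential to make the trace contributions vanish at leading order), and (iv) the perturbations $Q_\Omega v_t$ and $Q_{j,\Omega}v_t$ remain subleading. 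Once these scalings are calibrated, the contradictions follow by the standard Agmon mechanism.
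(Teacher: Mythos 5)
Your plan is, in substance, the paper's own argument: the paper first proves Lemma~\ref{L:3.9}, which localizes \eqref{E:3.4} to frozen-coefficient estimates at interior and boundary points (absorbing the perturbations $Q_\Omega$, $Q_{j,\Omega}$ and the coefficient-variation errors for small neighbourhoods and large $|\lambda|$), and then argues as in the proof of Theorem~4.2 of \cite{AV}, which is precisely the scaled oscillatory/half-line test-function construction you describe; you simply merge the localization and the symbol-extraction into one direct contradiction.

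Two points in your sketch need repair. First, in the branch where the symbol degenerates at $\lambda=0$ (failure of ordinary ellipticity of $\mathring A$, and likewise the half-line condition with $\lambda=0$) you keep $\lambda_t$ bounded, and then your ``crucial observation'' $\norm \tilde{\mathcal P}v_t\norm \le C_3|\lambda_t|^{-1/(2m)}\norm v_t\norm_{2m,p,\Omega}$ gives no smallness at all: $C_3|\lambda_t|^{-1/(2m)}$ is then a fixed constant and the right-hand side is of the same order $t^{2m-\kappa n/p}$ as the left-hand side of \eqref{E:3.4}, so the perturbation cannot be absorbed this way. You must instead use the intermediate, $\lambda$-independent bound established in the proof of Proposition~\ref{P:3.6}, namely $\norm Qv\norm_{0,p,\R^n}+\sum_j\norm Q_jv\norm_{2m-m_j,p,\R^n}\le C_1\norm v\norm_{2m-1,p,\Omega}$, which is $o\bigl(t^{2m-\kappa n/p}\bigr)$ for your $v_t$, or else take $|\lambda_t|\to\infty$ with $|\lambda_t|=o(t^{2m})$, which is permitted since \eqref{E:3.4} is assumed for all $\lambda\in\mathcal L$ with $|\lambda|\ge\lambda^0$. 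Second, condition (1) of Definition~\ref{D:3.3} is required for $x\in\bar\Omega$, whereas your interior construction needs $x^0\in\Omega$; run the interior argument quantitatively so that it yields $|\mathring A(x,\xi)-\lambda|\ge c\,(|\xi|^{2m}+|\lambda|)$ with $c$ uniform in $x\in\Omega$ and $\lambda\in\mathcal L$, and pass to $\bar\Omega$ by continuity of the coefficients; establishing (1) first is also what makes the decaying half-line solution $v^*$ exponentially decaying, which your normal-variable scaling tacitly uses. With these adjustments, and the routine bookkeeping of the parameter-dependent quotient (trace) norms via \cite[Proposition~2.2]{ADF} after flattening the boundary, your argument goes through and is essentially the one the paper delegates to Lemma~\ref{L:3.9} and \cite{AV}.
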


In order to proof Proposition~\ref{P:3.8} we need the following lemma.

\begin{lemma}
  \label{L:3.9}
  Let the hypotheses of Proposition~\ref{P:3.8} hold. Then:

  (1) For each point $x^0\in\Omega$ there is a neighbourhood $U\subset\Omega$ of $x^0$ and a number $\lambda_1>0$ such that for every $v_{2m}\in W_p^{2m}(\Omega)$ with support contained in $U$ the estimate
  \[ \norm v_{2m}\norm_{2m,p,\Omega} \le c_1\| (A(x^0,D)-\lambda)v_{2m}\|_{0,p,\Omega}\]
  holds for $\lambda\in\mathcal L$ with $|\lambda|\ge \lambda_1$, where the constant $c_1$ does not depend upon $v_{2m}$ and $\lambda$.

  (2) For each point $x^0\in\Gamma$ there is a neighbourhood $U\subset\R^n$ of $x^0$ and a number $\lambda_2>0$ such that for every $v_{2m}\in W_p^{2m}(\Omega)$ with support contained in $U$ the estimate
  \begin{align*}
   & \norm v_{2m}\norm_{2m,p,\Omega}\le \\
   & \quad c_2\Big( \|(A(x^0,D)-\lambda)v_{2m}\|_{0,p,\Omega} + \sum_{j=1}^m \norm\tilde \gamma_{2m-m_j,p} B_j(x^0,D)v_{2m}\norm_{2m-m_j-1/p,p,\Gamma}\Big)
   \end{align*}
  holds for $\lambda\in\mathcal L$ with $|\lambda|\ge \lambda_2$, where the constant $c_2$ does not depend upon $v_{2m}$ and $\lambda$ and $\tilde\gamma_{2m-m_j,p}$ denotes the trace operator mapping $W_p^{2m-m_j}(\Omega)$ onto $W_p^{2m-m_j-1/p}(\Gamma)$ (see Proposition~\ref{P:2.13}).
\end{lemma}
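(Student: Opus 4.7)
The approach is localization combined with the perturbation bounds already established in the proof of Proposition~\ref{P:3.6}. I would fix the point $x^0$, freeze the coefficients of $A$ and the $B_j$'s at $x^0$, and show that on a sufficiently small neighbourhood $U$ of $x^0$ the a priori estimate \eqref{E:3.4} yields an estimate with the frozen-coefficient operators on the right, once the coefficient-oscillation terms and the pseudodifferential corrections $Q_\Omega,Q_{j,\Omega}$ are absorbed into the left-hand side.

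More concretely, for $v_{2m}\in W_p^{2m}(\Omega)$ with support in $U$, the estimate \eqref{E:3.4} reads
\begin{equation*}
\norm v_{2m}\norm_{2m,p,\Omega}\le C_5\Big(\|v_0\|_{0,p,\Omega}+\sum_{j=1}^m\norm[v_{2m-m_j}]\norm_{2m-m_j-1/p,p,\Gamma}\Big),
\end{equation*}
with $v_0,[v_{2m-m_j}]$ as in Proposition~\ref{P:3.8}. I would then decompose
\begin{equation*}
v_0=(A(x^0,D)-\lambda)v_{2m}+\sum_{|\alpha|\le 2m}(a_\alpha(x)-a_\alpha(x^0))D^\alpha v_{2m}+Q_\Omega v_{2m},
\end{equation*}
and analogously for $(B_j(x,D)+Q_{j,\Omega})v_{2m}$. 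For the top-order contribution $|\alpha|=2m$, the continuity of $a_\alpha$ gives $\sup_U|a_\alpha(x)-a_\alpha(x^0)|\le\epsilon_U$, which can be made arbitrarily small by shrinking $U$; for $|\alpha|<2m$, the interpolation inequality $|\lambda|^{(2m-|\alpha|)/(2m)}\|v_{2m}\|_{|\alpha|,p,\Omega}\le C\norm v_{2m}\norm_{2m,p,\Omega}$ shows these lower-order contributions are $O(|\lambda|^{-1/(2m)})\norm v_{2m}\norm_{2m,p,\Omega}$. Furthermore the bounds established in the proof of Proposition~\ref{P:3.6} give $\|Q_\Omega v_{2m}\|_{0,p,\Omega}+\sum_j\norm\gamma_{2m-m_j,p}Q_{j,\Omega}v_{2m}\norm_{2m-m_j-1/p,p,\Gamma}\le C|\lambda|^{-1/(2m)}\norm v_{2m}\norm_{2m,p,\Omega}$. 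Collecting all this and choosing $U$ sufficiently small and $|\lambda|$ sufficiently large so that the total error coefficient is at most $\tfrac12$, the corresponding error can be absorbed into the left-hand side.

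For Part~(1), choosing $U\Subset\Omega$ makes every differential expression $B_j(x^0,D)v_{2m}$ compactly supported in $\Omega$, so its trace on $\Gamma$ vanishes and only the $(A(x^0,D)-\lambda)$-term survives on the right. For Part~(2), $U$ is any sufficiently small neighbourhood of $x^0\in\Gamma$ in $\R^n$, and the quotient-space trace $\gamma_{2m-m_j,p}$ is identified with the genuine trace $\tilde\gamma_{2m-m_j,p}$ via Proposition~\ref{P:2.13}. I expect the main obstacle to be the boundary analogue of the perturbation estimate, namely bounding $\norm[(B_j(x,D)-B_j(x^0,D))v_{2m}]\norm_{2m-m_j-1/p,p,\Gamma}$ by $(\epsilon_U+C|\lambda|^{-1/(2m)})\norm v_{2m}\norm_{2m,p,\Omega}$; this requires control in the $W_p^{2m-m_j}$ norm rather than just $L_p$, which is why Assumption~\ref{A:3.1}(3) provides the stronger smoothness of $b_{j,\alpha}$, enabling the Leibniz-rule and interpolation argument to carry over to the higher Sobolev norm.
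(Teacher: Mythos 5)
Your proposal is correct and follows essentially the same route as the paper: starting from the assumed estimate \eqref{E:3.4}, freezing the coefficients at $x^0$, and absorbing the coefficient-oscillation terms (small for $|\alpha|=2m$ by shrinking $U$, of order $|\lambda|^{-1/(2m)}$ for lower orders by parameter-dependent interpolation) together with the $Q_\Omega$, $Q_{j,\Omega}$ corrections bounded in the proofs of Propositions~\ref{P:3.4} and \ref{P:3.6}. The only difference is presentational: the paper obtains the frozen-coefficient perturbation bound \eqref{E:3.6}, including its boundary-norm analogue, by citing the argument of \cite[proof of Lemma 4.2]{AV} together with \cite[Proposition 2.2]{ADF}, whereas you reconstruct that step directly via the Leibniz-rule and interpolation argument you describe.
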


\begin{proof}
  We know from the proofs of Proposition~\ref{P:3.4} and \ref{P:3.6} that for $\lambda\in\mathcal L$ with $|\lambda|\ge \lambda^0$ and for $x^0\in\Omega$ we have $\|Q_\Omega v_{2m}\|_{0,p,\Omega}\le c_3|\lambda|^{-1/(2m)} \norm v_{2m}\norm_{2m,p,\Omega}$, while for $x^0\in\Gamma$ we have
  \[ \|Q_\Omega v_{2m}\|_{0,p,\Omega} + \sum_{j=1}^m \norm \tilde\gamma_{2m-m_j,p} Q_{j,\Omega} v_{2m}\norm_{2m-m_j-1/p,p,\Gamma}\le c_3 |\lambda|^{-1/(2m)} \norm v_{2m}\norm_{2m,p,\Omega},\]
  where the constant $c_3$ does not depend upon $v_{2m}$ and $\lambda$. Furthermore, for the same values of $\lambda$ we can argue as in \cite[proof of Lemma 4.2]{AV} and appeal to \cite[Proposition 2.2]{ADF} to show that for $x^0\in\Omega$
  \begin{equation}
    \label{E:3.6}
    \| (A(x,D)-A(x^0,D))v_{2m}\|_{0,p,\Omega} \le c' d + c_4|\lambda|^{-1/(2m)} \norm v\norm_{2m,p,\Omega},
  \end{equation}
  while for $x^0\in\Gamma$ we can likewise show that
  \begin{align*}
   & \| (A(x,D)-A(x^0,D))v_{2m}\|_{0,p,\Omega}\\
    & \quad +\sum_{j=1}^m\norm \tilde\gamma_{2m-m_j,p} (B_j(x,D) - B_j(x^0,D))v_{2m}\norm_{2m-m_j-1/p,p,\Gamma}
    \end{align*}
  is bounded by the expression on the right side of \eqref{E:3.6}, where $d$ denotes the diameter of $U$ and the constants $c'$ and $c_4$ do not depend upon $\lambda^0$, $v_{2m}$, and $\lambda$. Hence by choosing $d$ sufficiently small and $\lambda$ sufficiently large, the assertion of the lemma follows immediately.
\end{proof}

\begin{proof}[Proof of Proposition~\ref{P:3.8}]
By appealing to Lemma~\ref{L:3.9} and \cite[Proposition 2.2]{ADF} we can argue as in \cite[proof of Theorem 4.2]{AV} to establish the validity of the proposition.
\end{proof}

We now come to the main result of this section.

\begin{theorem}
  \label{T:3.10}
  Suppose that the boundary problem \eqref{E:1.1}, \eqref{E:1.2} is parameter-elliptic in $\mathcal L$. Then there exists a constant $\lambda^0 = \lambda^0(p)>0$ such that for $\lambda\in\mathcal L$ with $|\lambda|\ge\lambda^0$ the boundary problem \eqref{E:1.1}, \eqref{E:1.2} has a unique solution $u\in W_p^{2m,\chi}(\Omega)$ for every $f\in W_p^{0,\chi}(\Omega)$ and $g=(g_1,\dots,g_m)^\top$ with $g_j\in W_p^{2m-m_j-1/p,\chi}(\Gamma)$, and the a priori estimate
  \[ \norm u\norm_{2m,p,\Omega}^\chi \le C\Big(\norm f\norm_{0,p,\Omega}^\chi + \sum_{j=1}^m \norm g_j\norm_{2m-m_j-1/p,p,\Gamma}^\chi\Big)\]
  holds, where the constant $C$ does not depend upon $f$, the $g_j$, and $\lambda$.
\end{theorem}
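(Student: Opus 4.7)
The plan is to combine the a priori estimate already established in Proposition~\ref{P:3.6} with an existence argument that passes from the generalized Sobolev space $W_p^{2m,\chi}(\Omega)$ to the classical one $W_p^{2m}(\Omega)$, where the solvability results from \cite{ADF} apply, and then inverts a small Neumann-type perturbation.

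First I would observe that uniqueness and the a priori estimate in the theorem follow directly from Proposition~\ref{P:3.6}: applied to zero data, the estimate $\norm u\norm_{2m,p,\Omega}^\chi \le C(\norm f\norm_{0,p,\Omega}^\chi + \sum_j \norm g_j\norm_{2m-m_j-1/p,p,\Gamma}^\chi)$ forces $u=0$, and in general it yields the very inequality required by the theorem, once existence is in hand.

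For existence, given data $f\in W_p^{0,\chi}(\Omega)$ and $g_j\in W_p^{2m-m_j-1/p,\chi}(\Gamma)$, the plan is to transfer them via the isometric isomorphisms $V_{0,p}^{-1}$ and the analogous boundary isomorphism (Propositions~\ref{P:2.12} and \ref{P:2.13}) to classical-space data $v_0\in L_p(\Omega)$ and $[v_{2m-m_j}]\in W_p^{2m-m_j}(\Omega)/\mathcal N_{2m-m_j,p}$. I would then seek $v_{2m}\in W_p^{2m}(\Omega)$ solving the modified classical boundary problem
\[
(\mathcal P + \tilde{\mathcal P} - \lambda)v_{2m} = \{v_0, [v_{2m-m_1}], \ldots, [v_{2m-m_m}]\},
\]
in the notation of the proof of Proposition~\ref{P:3.6}. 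Setting $u := V_{2m,p}v_{2m}$ then recovers the sought solution of the original boundary problem in $W_p^{2m,\chi}(\Omega)$, because the pseudodifferential identity worked out in Propositions~\ref{P:3.4} and \ref{P:3.6} shows that this modified problem is precisely the image of \eqref{E:1.1},\eqref{E:1.2} under the isomorphisms $V_{2m,p}$, $V_{0,p}$ and the quotient-space analogues on $\Gamma$.

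To solve the modified problem, I would invoke \cite[Theorem 2.1]{ADF} for the invertibility of $\mathcal P-\lambda$ on $L_p(\Omega)\times\prod_{j=1}^m W_p^{2m-m_j}(\Omega)/\mathcal N_{2m-m_j,p}$ for $|\lambda|\ge \lambda_0$, with resolvent $R(\lambda)$, and then factor $\mathcal P+\tilde{\mathcal P}-\lambda = (I+\tilde{\mathcal P}R(\lambda))(\mathcal P-\lambda)$. From Proposition~\ref{P:3.6} we have the bound $\norm \tilde{\mathcal P} R(\lambda)\norm \le C_3 C_4 |\lambda|^{-1/(2m)}$; enlarging $\lambda^0\ge \lambda_0$ so that this is at most $\tfrac12$ for $|\lambda|\ge \lambda^0$, the operator $I+\tilde{\mathcal P}R(\lambda)$ is invertible by Neumann series with inverse bounded uniformly in $\lambda$, and
\[
v_{2m} := R(\lambda)\big(I+\tilde{\mathcal P}R(\lambda)\big)^{-1}\{v_0,[v_{2m-m_1}],\ldots,[v_{2m-m_m}]\}
\]
supplies the desired classical solution.

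The main obstacle, as I see it, is not the functional analysis of the final Neumann inversion but the bookkeeping needed to verify that the correspondence between the original problem in the generalized spaces and the modified problem in the classical spaces genuinely intertwines them in the claimed way; this rests on the $x$-form pseudodifferential calculus carried out in Propositions~\ref{P:3.4} and \ref{P:3.6} and on the identifications of quotient spaces in Section~\ref{S:2}. Once this dictionary is secured, the theorem reduces to the Neumann-series inversion together with \cite[Theorem 2.1]{ADF}, and the a priori estimate is inherited intact from Proposition~\ref{P:3.6}.
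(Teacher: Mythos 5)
Your proposal is correct and follows essentially the same route as the paper: existence is obtained by transferring the data through the isomorphisms of Propositions~\ref{P:2.12} and \ref{P:2.13} to the perturbed classical problem $(\mathcal P+\tilde{\mathcal P}-\lambda)v_{2m}=\{v_0,[v_{2m-m_1}],\ldots,[v_{2m-m_m}]\}$, which is solved via the factorization $(I+\tilde{\mathcal P}R(\lambda))(\mathcal P-\lambda)$, the smallness bound $\norm\tilde{\mathcal P}R(\lambda)\norm\le C_3C_4|\lambda|^{-1/(2m)}$ and \cite[Theorem~2.1]{ADF}, exactly as in the paper's proof, while uniqueness and the a priori estimate are inherited from Proposition~\ref{P:3.6}. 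The intertwining of the generalized and classical problems that you flag as the main bookkeeping issue is precisely the identity $\mathscr F^{-1}\chi\mathscr F(A-\lambda)u_{2m}^E=(A-\lambda)v_{2m}^E+Qv_{2m}$ (and its analogue for the $B_j$) already established in the proofs of Propositions~\ref{P:3.4} and \ref{P:3.6}, so no new argument is needed there.
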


\begin{proof}
  We set $u_0:=f$, and we know from Section~\ref{S:2} that  $g_j\in[u_{2m-m_j}]\in W_p^{2m-m_j}(\Omega)/N_{2m-m_j,p}$ for some $u_{2m-m_j}\in W_p^{2m-m_j,\chi}(\Omega)$, $j=1,\dots,m$. Then referring to the proof of Proposition~\ref{P:3.6} for the terminology, let us now seek a solution of the equation
  \begin{equation}
    \label{E:3.7}
    (\mathcal P-\lambda) u_{2m} = \big\{ u_0, [u_{2m-m_1}],\ldots,[u_{2m-m_m}]\big\} \quad\text{for }u_{2m}\in W_p^{2m,\chi}(\Omega)
  \end{equation}
  and for $\lambda\in\mathcal L$ with $|\lambda|\ge\lambda^0$, where $\lambda^0$ is the constant of Proposition~\ref{P:3.6}. Accordingly, let $v_{2m}$ (resp. $v_0$) denote the image of $u_{2m}$ (resp. $u_0$) under the isomorphism mapping $W_p^{2m,\chi}(\Omega)$ onto $W_p^{2m}(\Omega)$ (resp. $W_p^{0,\chi}(\Omega)$ onto $W_p^0(\Omega)$) (see Proposition~\ref{P:2.12}) and let $[v_{2m-m_j}]$ denote the image of $[u_{2m-m_j}]$ under the isomorphism mapping $W_p^{2m-m_j,\chi}(\Omega)/N_{2m-m_j,p}$ onto $W_p^{2m-m_j}(\Omega)/\mathcal N_{2m-m_j,p}$. Then we can argue as we did in the proof of Proposition~\ref{P:3.6} to show that the equation~\eqref{E:3.7} has a solution $u_{2m}$ if and only if the equation
  \begin{equation}
    \label{E:3.8}
    (\mathcal P-\lambda)v_{2m} = (I+\tilde{\mathcal P}R(\lambda))^{-1} \big\{ v_0, [v_{2m-m_j}],\ldots, [v_{2m-m_m}]\big\}
  \end{equation}
  has a solution $v_{2m}\in W_p^{2m}(\Omega)$, where all terms are defined in the proof of Proposition~\ref{P:3.6}.
  Then again we can argue as we did in that proof to show indeed that the equation \eqref{E:3.8} has a unique solution $v_{2m}$ such that the a priori estimate
  \[ \norm v_{2m}\norm_{2m,p,\Omega}\le C\Big( \|v_0\|_{0,p,\Omega}+\sum_{j=1}^m \norm [v_{2m-m_j}]\norm_{2m-m_j-1/p,p,\Gamma}\Big)\]
  holds, where the constant $C$ does not depend upon $v_0$, the $[v_{2m-m_j}]$, and $\lambda$. All the assertions of the theorem follow immediately from these results and those of Section~\ref{S:2}.
\end{proof}

\section{Spectral Theory}\label{S:4}

In this section, we fix our attention upon the boundary problem \eqref{E:1.1}, \eqref{E:1.2} under the assumption that in \eqref{E:1.2} the $g_j$ are all zero, but with one exceptional case in Proposition~\ref{P:4.1} below. Then with $A_{B,p}^\chi$ denoting the Banach space operator induced by this boundary problem, with domain $D(A_{B,p}^\chi)\subset W_p^{2m,\chi}(\Omega)$, we are going to use the results of Sections~\ref{S:2} and \ref{S:3} to show that $A_{B,p}^\chi$ has a compact resolvent and then derive various results pertaining to its spectral properties.

Accordingly, let $\mathcal P^\chi-\lambda$ denote the operator mapping $W_p^{2m,\chi}(\Omega)$ into $W_p^{0,\chi}(\Omega)\times\prod_{j=1}^m W_p^{2m-m_j,\chi}(\Omega)/N_{2m-m_j,p}$ defined by
\[ (\mathcal P^\chi-\lambda)u = \big\{ (A(x,D)-\lambda)u, \gamma_{2m-m_1,p}^\chi B_1(x,D)u,\dots, \gamma_{2m-m_m}^\chi B_m(x,D)u\big\}\]
for $u\in W_p^{2m,\chi}(\Omega)$. Also referring to the proof of Proposition~\ref{P:3.6} for terminology, let $\mathcal P$ (resp. $\mathcal P + \tilde{\mathcal P}$) denote the operator that acts like $\mathcal P(x,D)$ (resp. $\mathcal P(x,D) + \tilde{\mathcal P}$) with domain $W_p^{2m}(\Omega)$ and range in $L_p(\Omega)\times \prod_{j=1}^m W_p^{2m-m_j}(\Omega)/\mathcal N_{2m-m_j,p}$. Then we have shown in the proofs of Proposition~\ref{P:3.6} and Theorem~\ref{T:3.10} that the set $\big\{\lambda\in\mathcal L\big| \,|\lambda|\ge\lambda^0\big\}$ is contained in the resolvent set of each of the operators $\mathcal P^\chi$, $\mathcal P$, and $\mathcal P+\tilde{\mathcal P}$. Now turning for the moment to the general boundary problem \eqref{E:1.1}, \eqref{E:1.2}, that is, without the assumption that the $g_j$ are all zero, we have the following result.

\begin{proposition}
  \label{P:4.1}
  Let $\{f,g_1,\dots,g_m\}\in L_p(\Omega)\times \prod_{j=1}^m W_p^{2m-m_j}(\Omega)/\mathcal N_{2m-m_j,p}$, and for $\lambda\in\mathcal L$ with $|\lambda|\ge\lambda^0$ let $v^1$ (resp. $v^2$) denote the unique vector in $W_p^{2m}(\Omega)$ for which $(\mathcal P-\lambda)v^1 = \{f,g_1,\dots,g_m\}$ (resp. $(\mathcal P+\tilde{\mathcal P}-\lambda)v^2 = \{f,g_1,\dots,g_m\}$. Then $\norm v^1-v^2\norm_{2m,p,\Omega}\le C |\lambda|^{-1/(2m)} \norm v^2\norm_{2m,p,\Omega}$, where the constant $C$ does not depend upon $f$, the $g_j$, and $\lambda$.
\end{proposition}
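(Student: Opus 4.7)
The plan is to subtract the two defining equations and exploit the estimates already developed in the proof of Proposition~\ref{P:3.6}. Since $(\mathcal{P}-\lambda)v^1 = \{f,g_1,\ldots,g_m\}$ and $(\mathcal{P}+\tilde{\mathcal{P}}-\lambda)v^2 = \{f,g_1,\ldots,g_m\}$, subtraction yields
\[
(\mathcal{P}-\lambda)(v^1-v^2) = -\tilde{\mathcal{P}} v^2
\]
in $L_p(\Omega)\times\prod_{j=1}^m W_p^{2m-m_j}(\Omega)/\mathcal{N}_{2m-m_j,p}$. Since $|\lambda|\ge\lambda^0$ lies in the resolvent set of $\mathcal{P}$ (by \cite[Theorem~2.1]{ADF}, as recalled in the proof of Proposition~\ref{P:3.6}), I can apply $R(\lambda) = (\mathcal{P}-\lambda)^{-1}$ to both sides to obtain $v^1-v^2 = -R(\lambda)\tilde{\mathcal{P}} v^2$.

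Next I would invoke the two key quantitative estimates from the proof of Proposition~\ref{P:3.6}. First, the uniform resolvent bound
\[
\norm R(\lambda)\norm_{\{L_p(\Omega),\,W_p^{2m-m_1}(\Omega)/\mathcal{N}_{2m-m_1,p},\ldots\}\to W_p^{2m}(\Omega)} \le C_4,
\]
where $C_4$ does not depend on $\lambda$. Second, the decay of the perturbation
\[
\norm \tilde{\mathcal{P}}v\norm_{W_p^{2m}(\Omega)\to L_p(\Omega)\times\prod_{j=1}^m W_p^{2m-m_j}(\Omega)/\mathcal{N}_{2m-m_j,p}} \le C_3 |\lambda|^{-1/(2m)}\norm v\norm_{2m,p,\Omega},
\]
which was the crucial smallness gained from putting $\tilde{\mathcal{P}}$ in $x$-form (one order lower than the principal part). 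Composing these bounds with $v = v^2$ gives directly
\[
\norm v^1-v^2\norm_{2m,p,\Omega} \le C_4\,\norm\tilde{\mathcal{P}}v^2\norm \le C_3 C_4\,|\lambda|^{-1/(2m)}\norm v^2\norm_{2m,p,\Omega},
\]
which is the asserted estimate with $C = C_3 C_4$.

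There is no genuine obstacle: the proposition is essentially a bookkeeping statement that records the perturbation structure already extracted in Proposition~\ref{P:3.6}. The only point requiring mild care is making sure that the norms on $\tilde{\mathcal{P}}v^2$ match the input side of $R(\lambda)$ (they do, by construction of $\tilde{\mathcal{P}}$ as a map into $L_p(\Omega)\times\prod_{j=1}^m W_p^{2m-m_j}(\Omega)/\mathcal{N}_{2m-m_j,p}$) and that the constants do not depend on $\lambda$, $f$, or the $g_j$, which follows from the fact that the bounds on both $R(\lambda)$ and $\tilde{\mathcal{P}}$ are $\lambda$-uniform for $|\lambda|\ge\lambda^0$.
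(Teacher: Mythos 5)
Your argument is correct and is essentially the paper's own proof: subtract the two equations to get $(\mathcal P-\lambda)(v^1-v^2)=-\tilde{\mathcal P}v^2$, invert with $R(\lambda)$, and combine the $\lambda$-uniform resolvent bound with the $C|\lambda|^{-1/(2m)}$ smallness of $\tilde{\mathcal P}$ established in the proof of Proposition~\ref{P:3.6}. No substantive difference from the paper's route.
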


\begin{proof}
  We have $(\mathcal P-\lambda)(v^1-v^2) = -\{ Q_\Omega v^2, [Q_{1,\Omega} v^2],\dots, [Q_{m,\Omega} v^2]\}$, and hence it follows from the proof of Proposition~\ref{P:3.6} (see also \cite[Theorem~2.1]{ADF}) that \linebreak $\norm v^1-v^2\norm_{2m,p,\Omega}\le C |\lambda|^{-1/(2m)} \norm v^2\norm_{2m,p,\Omega}$, where the constant $C$ is described above. This completes the proof of the Proposition.
\end{proof}

Next let $A_{B,p}^\chi$ denote the operator acting on $W_p^{0,\chi}(\Omega)$ induced by the restriction of the operator $\mathcal P^\chi$ to the set $\{u\in W_p^{2m,\chi}(\Omega)\,\big|\ [B_ju]=0\;\text{for }j=1,\dots,m\}$. Also let $A_{B,p}$ (resp. $\tilde A_{B,p}$) denote the operator acting on $L_p(\Omega)$ induced by the restriction of the operator $\mathcal P$ (resp. $\mathcal P +\tilde{\mathcal P}$) to the set $\{v\in W_p^{2m}(\Omega)\,\big|\, [B_jv]=0,\, j=1,\dots,m\}$ (resp. $\{ v\in W_p^{2m}(\Omega)\,\big|\, [(B_j+Q_{j,\Omega})v] = 0\;\text{for }j=1,\dots,m\}$). Then we know from the proofs of  Proposition~\ref{P:3.6} and Theorem~\ref{T:3.10} that the set $\{\lambda\in\mathcal L\,\big|\, |\lambda|\ge\lambda^0\}$ belongs to the resolvent set of each of the operators $A_{B,p}^\chi$, $A_{B,p}$, and $\tilde A_{B,p}$. Let $R_p^\chi(\lambda)$, $R_p(\lambda)$, and $\tilde R_p(\lambda)$ denote the resolvents of $A_{B,p}^\chi$, $A_{B,p}$, and $\tilde A_{B,p}$, respectively. Then we also know from the above proofs that for $\lambda\in\mathcal L$ with $|\lambda|\ge \lambda^0$
\begin{equation}
  \label{E:4.1}
  \norm R_p^\chi(\lambda)\norm_{W_p^{0,\chi}(\Omega)\to W_p^{2m,\chi}(\Omega)} + \norm R_p(\lambda)\norm_{L_p(\Omega)\to W_p^{2m}(\Omega)} + \norm \tilde R_p(\lambda)\norm_{L_p(\Omega)\to W_p^{2m}(\Omega)} \le C,
\end{equation}
where $C$ denotes a positive constant.

\begin{proposition}
  \label{P:4.2}
  It is the case that $A_{B,p}^\chi$, $A_{B,p}$, and $\tilde A_{B,p}$ have compact resolvents. Furthermore,  $\mu$ is an eigenvalue of $A_{B,p}^\chi$ of algebraic multiplicity $k$ if and only if $\mu$ is an eigenvalue of $\tilde A_{B,p}$ of algebraic multiplicity $k$.
\end{proposition}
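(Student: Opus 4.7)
My plan is to split the proposition into its two assertions and handle them separately: compactness by composing a $W_p^{2m}$-bound from \eqref{E:4.1} with the Rellich-Kondrachov embedding, and the eigenvalue equivalence by establishing a similarity $A_{B,p}^\chi = V_{0,p}\tilde A_{B,p}V_{0,p}^{-1}$ via the isomorphism $V_{0,p}$ of Proposition~\ref{P:2.12}.

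For compactness, I would first note that by \eqref{E:4.1} the operators $R_p(\lambda)$ and $\tilde R_p(\lambda)$ factor through $W_p^{2m}(\Omega)$, and the embedding $W_p^{2m}(\Omega)\hookrightarrow L_p(\Omega)$ is compact (Rellich-Kondrachov applies since $\Gamma$ is sufficiently smooth by Assumption~\ref{A:3.1}); composition yields compactness on $L_p(\Omega)$. For $R_p^\chi(\lambda)$, I would transport the compact embedding $W_p^{2m}(\Omega)\hookrightarrow L_p(\Omega)$ through the isomorphisms of Propositions~\ref{P:2.10} and~\ref{P:2.12} to obtain a compact embedding $W_p^{2m,\chi}(\Omega)\hookrightarrow W_p^{0,\chi}(\Omega)$; combined with the $W_p^{2m,\chi}(\Omega)$-bound in \eqref{E:4.1}, this gives compactness of $R_p^\chi(\lambda)$ on $W_p^{0,\chi}(\Omega)$.

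For the eigenvalue/multiplicity statement, I would first verify that $V_{0,p}$ and $V_{2m,p}$ agree on the common domain $W_p^{2m}(\Omega)\subset L_p(\Omega)$, by observing that both are induced at the $\R^n$-level by the single Fourier multiplier $\mathscr F^{-1}\chi^{-1}\mathscr F$ and that any $W_p^{2m}(\R^n)$-extension of $v\in W_p^{2m}(\Omega)$ is simultaneously an $L_p(\R^n)$-extension. With this identification in hand, for $v\in W_p^{2m}(\Omega)$ and $u:=V_{0,p}v = V_{2m,p}v$, the calculations from the proof of Proposition~\ref{P:3.6} yield $A(x,D)u = V_{0,p}\bigl(A(x,D)v+Q_\Omega v\bigr)$ and $[B_j(x,D)u] = V_{2m-m_j,p}[(B_j(x,D)+Q_{j,\Omega})v]$. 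Coupled with the relation $V_{2m-m_j,p}\mathcal N_{2m-m_j,p} = N_{2m-m_j,p}$, these identities show that $V_{0,p}$ restricts to a bijection $D(\tilde A_{B,p})\to D(A_{B,p}^\chi)$ and intertwines the two operators. Since $V_{0,p}$ is a topological isomorphism, it maps $\ker\bigl((\tilde A_{B,p}-\mu)^k\bigr)$ bijectively onto $\ker\bigl((A_{B,p}^\chi-\mu)^k\bigr)$ for every $k\in\N$ and $\mu\in\C$; the eigenvalues and their algebraic multiplicities therefore coincide.

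The step requiring the most care is the identification $V_{0,p}|_{W_p^{2m}(\Omega)} = V_{2m,p}$, since the $V_{k,p}$ are defined only up to quotient representatives and one must check that the underlying multiplier on $\R^n$ descends consistently across smoothness indices. Once this is secured the remaining assertions follow formally from similarity, and compactness is a routine embedding argument.
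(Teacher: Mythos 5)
Your proposal is correct and follows essentially the same route as the paper: compactness via \eqref{E:4.1} together with the compact embedding $W_p^{2m}(\Omega)\hookrightarrow L_p(\Omega)$ transported by the isomorphisms $V_{k,p}$, and the eigenvalue/multiplicity statement via the intertwining of $A_{B,p}^\chi$ with $\tilde A_{B,p}$ through these isomorphisms. The paper merely phrases the second part as an explicit correspondence of Jordan chains (and the first as the factorization $R_p^\chi(\lambda) = V_{2m,p}\circ\tilde R_p(\lambda)\circ V_{0,p}^{-1}$), which is the same similarity argument you give, including the compatibility of $V_{0,p}$ and $V_{2m,p}$ that you rightly single out as the point needing care.
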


\begin{proof} Suppose that $\lambda\in\mathcal L$ with $|\lambda|\ge\lambda^0$. Then it follows from \eqref{E:4.1}, the fact that the embedding of $W_p^{2m}(\Omega)$ into $L_p(\Omega)$ is compact (see \cite[p.~144]{Ad}), and from the resolvent equation (see \cite[p.~36]{K}) that the first assertion of the proposition is true for $A_{B,p}$ and $\tilde A_{B,p}$. That this is also true for the operator $A_{B,p}^\chi$ follows from the fact that $R_p^\chi(\lambda) = V_{2m,p}\circ \tilde R_p(\lambda)\circ V_{0,p}^{-1}$, where we refer to the text preceding Proposition~\ref{P:2.13} for terminology.

Next we note that if $\lambda_0$ belongs to the resolvent set of $\tilde A_{B,p}$, $T_p=(\tilde A_{B,p}-\lambda_0)^{-1}$, and $\mu$ is an eigenvalue of $\tilde A_{B,p}$, then $(\mu-\lambda_0)^{-1}$ is an eigenvalue of the compact operator $T_p$ and the principal subspace of $\tilde A_{B,p}$ corresponding to the eigenvalue $\mu$ has the same multiplicity as the principal subspace of $T_p$ corresponding to the eigenvalue $(\mu-\lambda_0)^{-1}$. Thus the non-zero eigenvalues of $\tilde A_{B,p}$ have finite algebraic multiplicities, and a similar remark holds for $A_{B,p}^\chi$.

Let $\bar u_{2m}\in D(A_{B,p}^\chi)$, where $D(\cdot)$ denotes the domain, and for $\lambda\in\C$ let $(A_{B,p}^\chi-\lambda)\bar u_{2m} = \bar u_0\in W_p^{0,\chi}(\Omega)$. Then we know that $\bar v_{2m} = V_{2m,p}^{-1} \bar u_{2m}\in D(\tilde A_{B,p})$ and $(\tilde A_{B,p}-\lambda)\bar v_{2m} = \bar v_0 = V_{0,p}^{-1}\bar u_0$. Hence if $\{u_j\}_{j=0}^{\ell-1}$ is a chain of length $\ell$ consisting of the eigenvector $u_0$ and the associated vectors $u_j$, $1\le j\le \ell-1$,  corresponding to the eigenvalue $\mu$ of $A_{B,p}^\chi$, that is,
$(A_{B,p}^\chi-\mu) u_0=0$, $(A_{B,p}^\chi-\mu)u_j=u_{j-1}$ for $j\ge 1$, and $(A_{B,p}^\chi-\mu)^\ell u_{\ell-1}=0$, then it follows that $\{v_j\}_{j=0}^{\ell-1}$ is a chain of length $\ell$ consisting of the eigenvector $v_0$ and associated vectors $v_j$, $1\le j\le \ell$, corresponding to the eigenvalue $\mu$ of $\tilde A_{B,p}$, where $v_0 = V_{2m,p}^{-1} u_0$ and $v_j=V_{2m,p}^{-1} u_j$ for $j\ge 1$. Since a similar result holds if we interchange the roles of $A_{B,p}^\chi$ and $\tilde A_{B,p}$, all the assertions of the proposition follow.
\end{proof}

As a consequence of Proposition~\ref{P:4.2}, we are now able to present the main results of this section.

\begin{theorem}
  \label{T:4.3}
  The eigenvalues as well as the principal vectors of $A_{B,p}^\chi$ corresponding to each such eigenvalue are the same for all $p$, $1<p<\infty$.
\end{theorem}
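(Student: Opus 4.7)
The plan is to establish that every principal vector of $A_{B,p}^\chi$ is a $C^\infty(\bar\Omega)$ function, so that through the inclusion $C^\infty(\bar\Omega)\hookrightarrow W_p^{2m,\chi}(\Omega)$ (valid for every $p\in(1,\infty)$) it is automatically a principal vector of $A_{B,q}^\chi$ for every other $q$. Interchanging $p$ and $q$ would then yield the theorem.

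First, I would transfer the question to the classical Sobolev setting using Proposition~\ref{P:4.2}: if $\{u_j\}_{j=0}^{\ell-1}$ is a chain of principal vectors of $A_{B,p}^\chi$ at an eigenvalue $\mu$, the chain-to-chain correspondence established in the proof of that proposition produces a chain $v_j := V_{2m,p}^{-1} u_j$ of principal vectors of $\tilde A_{B,p}$ at $\mu$ in $W_p^{2m}(\Omega)$. By the calculations in the proof of Proposition~\ref{P:3.6}, each $v_j$ satisfies
\[ (A(x,D)+Q_\Omega-\mu) v_j = v_{j-1}\;\text{in}\;\Omega,\qquad \gamma_{2m-m_k,p}(B_k(x,D)+Q_{k,\Omega})v_j = 0\;\text{on}\;\Gamma,\]
for $k=1,\dots,m$ with the convention $v_{-1}:=0$.

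Next I would run an elliptic regularity bootstrap on the $v_j$. The principal part $(\mathring A,\mathring B_1,\dots,\mathring B_m)$ of this system is a classical elliptic boundary problem: both the ellipticity of $\mathring A$ and the Shapiro--Lopatinskii compatibility of the $\mathring B_k$ follow from parameter-ellipticity specialized at $\lambda=0$. The integral representations in the proof of Proposition~\ref{P:3.6} show that the correction operators $Q_\Omega$ and $Q_{k,\Omega}$ are pseudodifferential of orders $2m-1$ and $m_k-1$, respectively, strictly below the principal parts. Combined with the $C^\infty$-regularity of the coefficients guaranteed by Assumption~\ref{A:3.1}, the standard $L_p$-elliptic iteration (induction on $j$, and within each $j$ on the Sobolev order) yields $v_j\in W_p^{2m+r}(\Omega)$ for every $r\in\mathbb N_0$, hence $v_j\in C^\infty(\bar\Omega)$.

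To return to $u_j = V_{2m,p}v_j$: taking a smooth compactly supported extension $\bar v_j\in C_0^\infty(\R^n)$ of $v_j$, for every multi-index $\alpha$ the product $\xi^\alpha\chi^{-1}(\xi)\mathscr F\bar v_j(\xi)$ lies in $L^1(\R^n)$, since $\mathscr F\bar v_j$ is Schwartz and $\chi^{-1}$ is of at most polynomial growth by Definition~\ref{D:2.1}; consequently $\mathscr F^{-1}\chi^{-1}\mathscr F\bar v_j\in C^\infty(\R^n)$, and so $u_j\in C^\infty(\bar\Omega)$. Smoothness then implies $u_j\in W_q^{2m,\chi}(\Omega)$ and that $[B_ku_j]=0$ holds classically for every $q\in(1,\infty)$, so $\{u_j\}$ is also a chain of principal vectors of $A_{B,q}^\chi$ at the same eigenvalue $\mu$. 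Swapping the roles of $p$ and $q$ completes the argument. The principal obstacle will be the bootstrap step for $\tilde A_{B,p}$, since $Q_\Omega$ and $Q_{k,\Omega}$ are genuine pseudodifferential tails rather than classical lower-order differential operators; the iteration succeeds precisely because of their strict sub-principal orders from Proposition~\ref{P:3.6}, which is what ensures that each step truly gains a derivative.
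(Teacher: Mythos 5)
Your first step --- transferring chains of principal vectors of $A_{B,p}^\chi$ to chains of $\tilde A_{B,p}$ via the isomorphisms $V_{2m,p}$, $V_{0,p}$ from Proposition~\ref{P:4.2} --- is exactly how the paper proceeds. The gap is in the regularity bootstrap. Assumption~\ref{A:3.1} does \emph{not} provide $C^\infty$ data: the coefficients $a_\alpha$, $b_{j,\alpha}$ and the boundary $\Gamma$ are only of class $C^{2(m+(n-1)^++n^+)+1}$ (this minimal, ``weakly smooth'' regularity is precisely the setting taken from \cite{ADF}), so an elliptic iteration cannot yield $v_j\in C^\infty(\bar\Omega)$; it terminates after finitely many steps, and in general the principal vectors are not smooth up to the boundary. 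Moreover, the problem satisfied by the $v_j$ is not a classical differential boundary problem: $Q_\Omega$ and $Q_{k,\Omega}$ are nonlocal operators built from the fixed finite-order extension operator $E$ and from the symbols $\sigma_\alpha^1$, $\sigma_{j,\alpha}^1$, whose $x$-smoothness is again only finite, so the ``standard $L_p$-elliptic iteration'' you invoke is not available as stated. Since your whole route to $p$-independence passes through $u_j\in C^\infty(\bar\Omega)$ --- this is what gives you $u_j\in W_q^{2m,\chi}(\Omega)$ for every $q$ and lets you read the boundary conditions classically --- the argument breaks down at this point; a secondary unproved step is that the condition $[B_ku_j]=0$, which in the $\chi$-setting is membership of a quotient class modulo $N_{2m-m_k,q}$, coincides with the classical vanishing of the trace independently of $q$.

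The paper avoids regularity altogether: it quotes Agmon's result \cite{A1} that the eigenvalues and principal vectors of the classical Sobolev realization are independent of $p$ --- an argument resting on resolvent mapping properties and embeddings, hence valid under the finite smoothness assumed here --- applies it to $\tilde A_{B,p}$, and then transfers back with Proposition~\ref{P:4.2}. If you wish to salvage your strategy, replace the bootstrap in differentiability by a bootstrap in the integrability exponent (show that a principal vector lying in $W_p^{2m}(\Omega)$ lies in $W_q^{2m}(\Omega)$ for all $q$, using the resolvent estimate \eqref{E:4.1} and Sobolev embedding), which is essentially Agmon's argument, and then address the exponent-independence of the quotient-space boundary condition explicitly rather than through smoothness.
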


\begin{proof}
  We know from \cite{A1} that the assertion is true when $A_{B,p}^\chi$ is replaced by$\tilde A_{B,p}$. That the assertion is also true for $A_{B,p}^\chi$ follows from Proposition~\ref{P:4.2}.
\end{proof}

\begin{theorem}
  \label{T:4.4}
  Let $\{\mathcal L_k\}_{k=1}^\ell$ denote a family of distinct rays in the complex $\lambda$-plane which emanate from the origin and which divide the $\lambda$-plane into $\ell$ sectors. Suppose in addition that the boundary problem \eqref{E:1.1}, \eqref{E:1.2} is parameter-elliptic along each of the rays $\mathcal L_k$ and that the angle between any two adjacent rays is less than $2m\pi/n$. Then $A_{B,p}^\chi$ has an infinite number of eigenvalues and the corresponding principal vectors are complete in $W_p^{0,\chi}(\Omega)$, $1<p<\infty$.
\end{theorem}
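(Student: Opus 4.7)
The plan is to reduce the assertion to the corresponding statement for $\tilde A_{B,p}$ acting on $L_p(\Omega)$, invoke a classical Keldysh/Agmon-type completeness theorem there, and then transfer the conclusion back via the isomorphism $V_{0,p}$.

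First, I would use Proposition~\ref{P:4.2}: the operators $A^\chi_{B,p}$ and $\tilde A_{B,p}$ share the same eigenvalues with identical algebraic multiplicities, and a Jordan chain $\{u_j\}_{j=0}^{\ell-1}$ for $A^\chi_{B,p}$ is carried by $V_{2m,p}^{-1}$ to a chain $\{v_j\}_{j=0}^{\ell-1}$ for $\tilde A_{B,p}$. Since $V_{2m,p}$ is the restriction to $W_p^{2m}(\Omega)$ of the isomorphism $V_{0,p}\colon L_p(\Omega)\to W_p^{0,\chi}(\Omega)$, the closed linear span of the principal vectors of $A^\chi_{B,p}$ in $W_p^{0,\chi}(\Omega)$ equals the $V_{0,p}$-image of the closed linear span in $L_p(\Omega)$ of the principal vectors of $\tilde A_{B,p}$. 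Hence completeness in $W_p^{0,\chi}(\Omega)$ reduces to completeness in $L_p(\Omega)$, and the infinitude of eigenvalues is preserved by this correspondence.

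Next I would assemble the two standard ingredients required for a Keldysh-type completeness theorem. For each $\mathcal L_k$, Theorem~\ref{T:3.10} applied with $\mathcal L=\mathcal L_k$, combined with the resolvent bound \eqref{E:4.1}, gives
\[ \norm \tilde R_p(\lambda)\norm_{L_p(\Omega)\to L_p(\Omega)}\le C|\lambda|^{-1}\quad\text{for $\lambda\in\mathcal L_k$ with $|\lambda|$ large,}\]
so each ray $\mathcal L_k$ is a ray of minimal growth for $\tilde A_{B,p}$. Furthermore, since $\tilde A_{B,p}$ is a lower-order perturbation of a proper $2m$-th order parameter-elliptic boundary problem on the $n$-dimensional bounded region $\Omega$, its resolvent at a regular point $\lambda_0$ is compact (Proposition~\ref{P:4.2}) and its approximation numbers decay like $k^{-2m/n}$; in particular, viewed on $L_2(\Omega)$ the operator $\tilde R_p(\lambda_0)$ lies in every Schatten class $S_q$ with $q>n/(2m)$.

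Finally, the hypothesis that any two adjacent rays subtend an angle strictly less than $2m\pi/n$ provides, for some $q>n/(2m)$, angular openings less than $\pi/q$, which is precisely the Keldysh completeness condition matched to the $S_q$-decay of the resolvent. Invoking the $L_p$-version of that theorem (cf.~\cite{A1}, as already quoted in the proof of Theorem~\ref{T:4.3}) yields completeness of the principal vectors of $\tilde A_{B,p}$ in $L_p(\Omega)$; the existence of infinitely many eigenvalues then follows, since a finite collection of finite-dimensional principal subspaces cannot span an infinite-dimensional space. I expect the main obstacle to be the rigorous passage from the Hilbert-space Keldysh theorem to the Banach-space setting $L_p(\Omega)$: I would handle this either by a direct appeal to Agmon's $L_p$-completeness result, or by first proving $L_2$-completeness via the classical Keldysh theorem and then extending to general $p$ by means of Theorem~\ref{T:4.3} together with the smoothing and $L_p$-boundedness of the associated spectral projections.
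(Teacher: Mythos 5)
Your proposal follows essentially the same route as the paper: reduce to $\tilde A_{B,p}$ on $L_p(\Omega)$ via Proposition~\ref{P:4.2} (and the isomorphism of Proposition~\ref{P:2.12}), establish completeness there by the Agmon--Keldysh machinery of \cite[Theorem~3.2]{A1} (rays of minimal growth from the parameter-elliptic estimates together with \eqref{E:4.1}, Schatten-class membership of the resolvent as in \eqref{E:4.3}, and the angle condition $<2m\pi/n$), and then transfer back; your closing remarks on the $L_2$-to-$L_p$ passage via Theorem~\ref{T:4.3} are also consistent with how the paper argues. The one step you pass over, and which the paper addresses explicitly at the start of its proof, is the density of $D(\tilde A_{B,p})$ in $L_p(\Omega)$: unlike $A_{B,p}$, the operator $\tilde A_{B,p}$ carries the perturbed, nonlocal boundary conditions $[(B_j+Q_{j,\Omega})v]=0$, so $C_0^\infty(\Omega)\subset D(\tilde A_{B,p})$ is not automatic, and without density the Keldysh-type theorem only yields completeness in the closure of the range of the resolvent rather than in all of $L_p(\Omega)$. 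The paper fills this in by combining the a priori estimate \eqref{E:3.4} with Proposition~\ref{P:4.1}, which compares solutions of the unperturbed and perturbed problems and shows $D(\tilde A_{B,p})$ inherits density from $D(A_{B,p})$; you should add this step before invoking the completeness theorem.
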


\begin{proof}
  Since $D(A_{B,p})$ is dense in $L_p(\Omega)$, it follows from \eqref{E:3.4} and Proposition~\ref{P:4.1} that the same is true for $D(\tilde A_{B,p})$. Hence we can argue as in \cite[Proof of Theorem~3.2]{A1} to show that the theorem is true when $A_{B,p}^\chi$ is replaced by $\tilde A_{B,p}$. That the assertion is true for $A_{B,p}^\chi$ follows from Propositions~\ref{P:2.12} and \ref{P:4.2}.
\end{proof}

In the following theorem we let $\mathcal L(\theta)$ denote the ray in the complex $\lambda$-plane emanating from the origin and making an angle $\theta$ with the positive real axis.

\begin{theorem}
  \label{T:4.5}
  Suppose the boundary problem \eqref{E:1.2}, \eqref{E:1.2} is parameter-elliptic along each of the rays $\mathcal L(\theta_1)$ and $\mathcal L(\theta_2)$, where $0<\theta_2-\theta_1<\min\{2m\pi/n, 2\pi\}$, but not parameter-elliptic along the ray $\mathcal L(\theta_0)$, where $\theta_1<\theta_0<\theta_2$. Then the sector $\mathcal L^\#$ determined by the inequalities $\theta_1\le \arg\lambda\le\theta_2$ contains an infinite number of eigenvalues of $A_{B,p}^\chi$.
\end{theorem}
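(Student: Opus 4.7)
The plan is to follow the strategy used in Theorem~\ref{T:4.4}, reducing via Proposition~\ref{P:4.2} from $A_{B,p}^\chi$ to the operator $\tilde A_{B,p}$ acting on the classical space $L_p(\Omega)$, and arguing by contradiction in the spirit of Agranovich--Vishik. Suppose that only finitely many eigenvalues of $\tilde A_{B,p}$ lie in $\mathcal L^\#$. Then there exists $R_0>0$ such that the truncated sector $\{\lambda\in\mathcal L^\#\,:\,|\lambda|\ge R_0\}$ is contained in the resolvent set of $\tilde A_{B,p}$ and the resolvent $\tilde R_p(\lambda)$ is holomorphic there.

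Parameter-ellipticity along $\mathcal L(\theta_1)$ and $\mathcal L(\theta_2)$ yields, via Proposition~\ref{P:3.6} and the reasoning leading to \eqref{E:4.1} applied to each of these two rays in place of the sector $\mathcal L$, a uniform bound $\norm \tilde R_p(\lambda)\norm_{L_p(\Omega)\to L_p(\Omega)}\le C|\lambda|^{-1}$ for $\lambda\in\mathcal L(\theta_1)\cup\mathcal L(\theta_2)$ with $|\lambda|\ge R_0$. Next I would fix arbitrary $f\in L_p(\Omega)$ and $g\in L_{p'}(\Omega)$ and examine the scalar holomorphic function $\varphi(\lambda):=\int_{\Omega} g(x)\,(\tilde R_p(\lambda)f)(x)\,dx$ on the truncated sector. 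Using the compactness of the resolvent together with a Weyl-type upper bound on the counting function for the eigenvalues of elliptic boundary problems (the type of estimate underlying Theorem~\ref{T:4.6}), one can derive an a priori polynomial-in-$|\lambda|$ bound $|\varphi(\lambda)|\le C(f,g)|\lambda|^N$ with $N$ of order $n/(2m)$, valid throughout the truncated sector. Because the opening angle $\theta_2-\theta_1$ is strictly less than $2m\pi/n$, the Phragm\'en--Lindel\"of principle applied in the sector upgrades the $|\lambda|^{-1}$ decay on the boundary rays to the same decay throughout $\mathcal L^\#$, and in particular along $\mathcal L(\theta_0)$; taking the supremum over unit $f,g$ then yields $\norm \tilde R_p(\lambda)\norm_{L_p(\Omega)\to L_p(\Omega)}\le C|\lambda|^{-1}$ on $\mathcal L(\theta_0)$ for $|\lambda|$ large.

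Reading the argument of Proposition~\ref{P:3.6} in reverse and transferring back via the isomorphisms of Section~\ref{S:2}, this resolvent bound is equivalent to an a priori estimate of the form \eqref{E:3.1new} holding along $\mathcal L(\theta_0)$; Proposition~\ref{P:3.7} then forces the boundary problem to be parameter-elliptic along $\mathcal L(\theta_0)$, contradicting the hypothesis. The principal obstacle will be supplying a rigorous polynomial-in-$|\lambda|$ bound for $\tilde R_p(\lambda)$ in the interior of the sector that is uniform enough to drive the Phragm\'en--Lindel\"of step; this is the place where the angular restriction $\theta_2-\theta_1<2m\pi/n$ is exploited and where an eigenvalue counting estimate (of the kind developed alongside Theorem~\ref{T:4.6}) must be brought in. The remaining steps are soft consequences of the material already established in Sections~\ref{S:3} and \ref{S:4}.
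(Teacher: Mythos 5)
Your overall architecture matches the paper's: reduce to $\tilde A_{B,p}$ via Proposition~\ref{P:4.2} and Theorem~\ref{T:4.3}, assume only finitely many eigenvalues in $\mathcal L^\#$, use parameter-ellipticity on the two bounding rays to get $\|\tilde R(\lambda)\|\le C|\lambda|^{-1}$ there, propagate this decay into the sector by a Phragm\'en--Lindel\"of argument, and then run the necessity results (Propositions~\ref{P:3.7}/\ref{P:3.8}, together with the trace right-inverse from \cite[Proposition~2.3]{ADF} to pass from a null-boundary-data resolvent bound to the full a priori estimate) to contradict the failure of parameter-ellipticity along $\mathcal L(\theta_0)$. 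The paper, however, does not carry out the Phragm\'en--Lindel\"of step itself: after reducing to $p=2$ it quotes Agmon \cite{A1} both for the statement about $A_{B,2}$ and for the implication ``finitely many eigenvalues in $\mathcal L^\#$ plus ray estimates $\Rightarrow$ $|\lambda|\,\|\tilde R_2(\lambda)\|\le C^\#$ throughout the truncated sector.''

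The step you flag as the principal obstacle is where your proposal has a genuine gap, and the tool you propose for it would not work. A Weyl-type bound on the eigenvalue counting function, together with compactness of the resolvent, gives no control whatsoever on $\|\tilde R_p(\lambda)\|$ at points of the resolvent set: for non-selfadjoint operators the resolvent norm can be arbitrarily large far from the spectrum, so no polynomial-in-$|\lambda|$ bound for $\varphi(\lambda)$ can be extracted this way. Moreover, a polynomial interior bound is not what the argument needs: if it were available, Phragm\'en--Lindel\"of would apply in any sector of opening less than $2\pi$ and the hypothesis $\theta_2-\theta_1<2m\pi/n$ would play no role, which signals the misconception. The correct input (this is what \cite{A1}, following Carleman/Gohberg--Krein, supplies) is a regularized-determinant estimate for resolvents of operators in a Schatten class: by \eqref{E:4.3} one has $\tilde R_2(\lambda)\in\mathscr S_{q}$ with $q$ arbitrarily close to $n/(2m)$, whence away from its poles the resolvent admits a bound of \emph{exponential} order $n/(2m)+\epsilon$, and Phragm\'en--Lindel\"of applies precisely because the opening of $\mathcal L^\#$ is strictly less than $\pi/(n/(2m))=2m\pi/n$. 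Note also that this Schatten-class machinery is set up in the paper only in the Hilbert space case, so you should first reduce to $p=2$ (as the paper does, using Theorem~\ref{T:4.3}) rather than work with the $L_p$--$L_{p'}$ pairing. With the Carleman-type estimate in place of your polynomial bound (or simply by invoking \cite{A1} as the paper does), the rest of your argument goes through.
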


\begin{proof}
  In light of Proposition~\ref{P:4.2} and Theorem~\ref{T:4.3}, we need only prove the theorem for $\tilde A_{B,p}$ in place of $A_{B,p}^\chi$ and for $p=2$. Accordingly, we know from \cite{A1} that the assertion is certainly true for $A_{B,2}$ in place of $\tilde A_{B,2}$. On the other hand if we suppose that the assertion is false for $\tilde A_{B,2}$, then we also know from \cite{A1} that there exist positive constants $C^\#$ and $\lambda^\#$ such that the set $\mathcal L_1^\# = \{\lambda\in \mathcal L^\#\,\big|\, |\lambda|\ge\lambda^\#\}$ belongs to the resolvent set of $\tilde A_{B,2}$ and for $\lambda\in\mathcal L_1^\#$ the inequality $|\lambda|\,\|\tilde R_2(\lambda)\|_{L_2(\Omega)\to L_2(\Omega)}\le C^\#$ holds. But since it follows from \cite[Proposition~2.3]{ADF} and Proposition~\ref{P:3.8} that the boundary problem \eqref{E:1.1}, \eqref{E:1.2} is parameter-elliptic in $\mathcal L^\#$, we conclude from \cite[Theorem~2.1]{ADF} that there is a constant $\lambda_0>0$ such that the set $\{\lambda\in\mathcal L^\#\,\big|\, |\lambda|\ge \lambda_0\}$ belongs to the resolvent set of $A_{B,2}$ which is a contradiction. This completes the proof of the proposition.
\end{proof}

Let us now turn to the asymptotic behaviour of the eigenvalues of $A_{B,p}^\chi$. Accordingly for $0<\theta<\pi$ let $\mathcal L_\theta$ denote the closed sector in the complex plane with vertex at the origin determined by the inequalities $\theta\le|\arg\lambda|\le\pi$. Then guided by future considerations we shall henceforth suppose that the sector $\mathcal L$ defined in the text following \eqref{E:1.2} coincides with $\mathcal L_\theta$ and that $\overline{\R_-}$ belongs to the resolvent set of $A_{B,p}^\chi$. We note that there is no loss of generality incurred by these assumptions since they can always be achieved by means of a rotation and a shift in the spectral parameter. Note also from Theorem~\ref{T:3.10} that there are at most a finite number of eigenvalues of $A_{B,p}^\chi$ contained in $\mathcal L_\theta$. Furthermore, we denote the eigenvalues of $A_{B,2}^\chi$ by $\{\lambda_j\}_{j\ge 1}$, where each eigenvalue is counted according to its algebraic multiplicity and arranged so that the $\{|\lambda_j|\}_{j\ge 1}$ form a non-decreasing sequence in $\R_+$. We note of course that the $\lambda_j$ are the eigenvalues of $A_{B,p}^\chi$ and $\tilde A_{B,p}$ for all $p$, $1<p<\infty$.

For $t>0$ let $N(t)$ denote the number of eigenvalues $\{\lambda_j\}_{j\ge 1}$ of $A_{B,p}^\chi$ for which $|\lambda_j|\le t$.

\begin{theorem}
  \label{T:4.6}
  Let the boundary problem \eqref{E:1.1}, \eqref{E:1.2} be parameter-elliptic along every ray emanating from the origin in the complex plane except along $\overline{\R_+}$. Then
  \[ N(t) = d t^{n/(2m)} + o (t^{n/(2m)})\;\text{as }t\to \infty,\text{ where } d = \frac{1}{(2\pi)^n} \int_\Omega dx \int_{\mathring A(x,\xi)<1} d\xi.\]
\end{theorem}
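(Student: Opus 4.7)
The plan is to reduce the assertion, in two steps, to a Weyl--type asymptotic that is already available for the classical realisation $A_{B,2}$ acting on $L_2(\Omega)$. Firstly, by Theorem~\ref{T:4.3} the eigenvalues $\{\lambda_j\}_{j\ge 1}$ of $A^\chi_{B,p}$ (with algebraic multiplicities) do not depend on $p$, so it suffices to work with $p=2$. Secondly, by Proposition~\ref{P:4.2} the operators $A^\chi_{B,2}$ and $\tilde A_{B,2}$ have exactly the same eigenvalues with the same algebraic multiplicities, hence the counting function $N(t)$ of $A^\chi_{B,p}$ coincides with that of $\tilde A_{B,2}$. Thus the problem is to establish
\[ N_{\tilde A_{B,2}}(t) = d\, t^{n/(2m)} + o(t^{n/(2m)}) \quad \text{as } t\to\infty,\]
with $d = (2\pi)^{-n}\int_\Omega dx\int_{\mathring A(x,\xi)<1} d\xi$.

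Next, I would invoke the classical Weyl--type result for $A_{B,2}$: under the hypothesis of parameter--ellipticity along every ray except $\overline{\R_+}$, the operator $A_{B,2}$ is known (see \cite{A1}) to satisfy
\[ N_{A_{B,2}}(t) = d\, t^{n/(2m)} + o(t^{n/(2m)})\quad\text{as }t\to\infty,\]
with precisely the constant $d$ above. This is the output of the standard Agmon--Kannai scheme (parameter--elliptic resolvent, powers $R_2(\lambda)^N$ for $N>n/(2m)$ of trace class with explicit asymptotic trace, followed by a Tauberian argument).

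The remaining step---and the principal obstacle---is to transfer this asymptotic from $A_{B,2}$ to $\tilde A_{B,2}$. The point is that these two operators act on the same Hilbert space $L_2(\Omega)$ but have slightly different domains: the boundary conditions of $\tilde A_{B,2}$ are $[(B_j+Q_{j,\Omega})v]=0$ whereas those of $A_{B,2}$ are $[B_jv]=0$, and the actions differ by $Q_\Omega$ in the interior. However, from Proposition~\ref{P:4.1} (or directly from the resolvent identity combined with the estimates of the proof of Proposition~\ref{P:3.6}) one has, along any ray $\mathcal L(\theta)$ of parameter--ellipticity,
\[ \norm \tilde R_2(\lambda) - R_2(\lambda)\norm_{L_2(\Omega)\to W_2^{2m}(\Omega)}\le C\,|\lambda|^{-1/(2m)}\norm \tilde R_2(\lambda)\norm_{L_2(\Omega)\to W_2^{2m}(\Omega)},\]
so that $\|\tilde R_2(\lambda)-R_2(\lambda)\|_{L_2\to L_2}=O(|\lambda|^{-1-1/(2m)})$ while $\|R_2(\lambda)\|_{L_2\to L_2}=O(|\lambda|^{-1})$. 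Thus the difference of resolvents is of a strictly lower order in the spectral parameter than either resolvent individually.

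With this in hand I would conclude by a standard perturbation argument in the Markus--Matsaev--Keldysh style: for an integer $N>n/(2m)$, the resolvent equation yields
\[ \tilde R_2(\lambda)^N - R_2(\lambda)^N = \sum_{k=0}^{N-1} \tilde R_2(\lambda)^{N-k-1}\bigl(\tilde R_2(\lambda)-R_2(\lambda)\bigr)R_2(\lambda)^{k},\]
and each summand is of trace class with trace $o(|\lambda|^{n/(2m)-N})$ as $|\lambda|\to\infty$ along any ray in $\mathcal L_\theta$. Consequently $\tilde R_2(\lambda)^N$ has the same leading asymptotic trace as $R_2(\lambda)^N$, and a Tauberian theorem (as in \cite{A1}) then transfers this to the equality of the leading asymptotics of $N_{\tilde A_{B,2}}(t)$ and $N_{A_{B,2}}(t)$. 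Combined with the reductions of the first paragraph, this yields the claim. The delicate point, and the main obstacle, is verifying the trace--class bound on each summand above with the correct order: this requires using not only the parameter--dependent resolvent estimate \eqref{E:4.1} but also Schatten--class estimates for powers of $\tilde R_2(\lambda)$, obtained from the compact embedding $W_2^{2m}(\Omega)\hookrightarrow L_2(\Omega)$ together with interpolation and the known eigenvalue counting for the comparison operator $A_{B,2}$.
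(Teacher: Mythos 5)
Your proposal is correct and follows essentially the same route as the paper: reduction to $\tilde A_{B,2}$ via Theorem~\ref{T:4.3} and Proposition~\ref{P:4.2}, then comparison of traces of resolvent powers of $\tilde A_{B,2}$ and $A_{B,2}$ using the $|\lambda|^{-1/(2m)}$ gain from Proposition~\ref{P:4.1} together with parameter-dependent Hilbert--Schmidt and trace-class estimates, followed by the known trace asymptotics for the classical operator and a Tauberian argument. The ``delicate point'' you flag is exactly what the paper supplies in Propositions~\ref{P:4.7}--\ref{P:4.9} (with the telescoping difference grouped so that Hilbert--Schmidt blocks $R_2(\lambda)^k$, $\tilde R_2(\lambda)^k$ appear explicitly, and with \cite{ADF} cited for the trace asymptotics and Tauberian step, plus Remark~\ref{R:4.10} to identify the constant $d$), so your plan matches the paper's proof in substance.
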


Before turning to the proof of Theorem~\ref{T:4.6}, let us make the following observations. Firstly, under our assumptions we know that $\mathring A(x,\xi)\not=0$ for $\xi\not=0$. Secondly, it follows from Theorem~\ref{T:4.5} that $A_{B,p}^\chi$ has an infinite number of eigenvalues. Furthermore, it follows from Proposition~\ref{P:4.2} and Theorem~\ref{T:4.3} that we need only prove the theorem with $A_{B,p}^\chi$ replaced by $\tilde A_{B,2}$. And in order to achieve this end we turn to the von Neumann-Schatten class of compact operators on $L_2(\Omega)$ (see \cite[Chapters II and III]{GK}).

Let $T$ be a compact operator on $L_2(\Omega)$. Then the non-zero eigenvalues $\{s_j(T)\}_{j\ge 1}$ of the non-negative operator $(T^*T)^{1/2}$, arranged so that $s_1(T)\ge s_2(T)\ge \ldots$, with each eigenvalue repeated according to its multiplicity, are called the singular values of $T$. For $0<q<\infty$, we denote by $\mathscr S_q$ the class of compact operators $T$ for which $\sum_{\ell\ge 1} s_\ell(T)^q<\infty$, and for $q\ge 1$ and $T\in \mathscr S_q$ we let $|T|_q = (\sum_{\ell\ge 1} s_\ell(T)^q)^{1/q}$. Note that $|\cdot|_q$ is a norm on $\mathscr S_q$ and with respect to this norm, $\mathscr S_q$ is a Banach space. Note also that if $q_1<q_2$, then $\mathscr S_{q_1}\subset \mathscr S_{q_2}$. The class $\mathscr S_2$ are the Hilbert-Schmidt operators, that is the class of compact operators $T$ which can be represented as an integral operator:
\begin{equation}
  \label{E:4.2}
  Tf(x) = \int_\Omega K(x,y)f(y)dy\quad\text{for }f\in L_2(\Omega),
\end{equation}
where $K\in L_2(\Omega\times\Omega)$. The operators from $\mathscr S_1$ are the trace class operators, that is, they have the trace
\[ \tr T = \sum_{j\ge 1} \lambda_j(T)\quad\text{for }T\in \mathscr S_1,\]
where $\{\lambda_j(T)\}_{j\ge 1}$ denote the non-zero eigenvalues of $T$, with each eigenvalue repeated according to its algebraic multiplicity and arranged so that their moduli form a non-increasing sequence in $\R_+$, and where the series converges absolutely. Furthermore, for $T\in \mathscr S_1$, we have
\[ |\tr T|\le |T|_1,\]
and $T$ is an integral operator, with the kernel $K(x,y)$ in \eqref{E:4.2} being continuous in $\bar\Omega\times\bar\Omega$, and we also have $\tr T = \int_\Omega K(x,x)dx$. Note that if in Theorem~\ref{T:3.10} we suppose that $\lambda\in\mathcal L$ with $|\lambda|\ge\lambda^0$, then it follows from that theorem and \cite[Subsection 4.2]{ADF} that for any $\epsilon>0$ and $\ell\in\N$ we have
\begin{equation}
  \label{E:4.3}
  \tilde R_2(\lambda) \in \mathscr S_{n(1+\epsilon)/(2m)}\quad\text{and } \tilde R_2(\lambda)^\ell \in \mathscr S_{n(1+\epsilon)/(2m\ell)}.
\end{equation}

\begin{proposition}
  \label{P:4.7}
  If $2m>n$, then put $k=1$, while if $2m\le n$ let $q$ denote the smallest even integer greater than $n/(2m)$ and put $k=q/2$. Then for $\lambda\in\mathcal L_\theta$ with $|\lambda|\ge\lambda^0$, $\tilde R_2(\lambda)^k$ is a Hilbert-Schmidt operator such that in its integral representation its kernel $\tilde K(x,y,\lambda)$ for  $x\in \bar\Omega$, $\lambda\in \mathcal L_{\theta}$, the map  $x\mapsto K(x,\cdot,\lambda),\, \bar\Omega\to L_2(\Omega)$ is continuous  for each $\lambda\in\mathcal L_\theta$ and
  \begin{equation}
    \label{E:4.4}
    \Big( \int_\Omega |\tilde K(x,y,\lambda)|^2 dy\Big)^{1/2} \le C_k |\lambda|^{\frac{n}{4m} -k},
  \end{equation}
  where the constant $C_k$ does not depend upon $x$ and $\lambda$.
\end{proposition}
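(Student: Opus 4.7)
The plan is to obtain the kernel of $\tilde R_2(\lambda)^k$ as the Riesz representer of the functional $f\mapsto(\tilde R_2(\lambda)^k f)(x)$, and to read the estimate~\eqref{E:4.4} off an $L_2\to L_\infty$ bound for $\tilde R_2(\lambda)^k$ with the correct $|\lambda|$-scaling. The two ingredients will be an iterated parameter-dependent elliptic a priori estimate and the Gagliardo--Nirenberg interpolation inequality.

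First, I would iterate the resolvent bound underlying Theorem~\ref{T:3.10}. Setting $v=\tilde R_2(\lambda)^k f$ for $f\in L_2(\Omega)$, $k$ successive applications of $|\lambda|\,\|h\|_{0,2,\Omega}\le C\|(\tilde A_{B,2}-\lambda)h\|_{0,2,\Omega}$ give $\|v\|_{0,2,\Omega}\le C|\lambda|^{-k}\|f\|_{0,2,\Omega}$. A parallel bootstrap on higher Sobolev scales, legitimate thanks to the coefficient and boundary regularity imposed in Assumption~\ref{A:3.1} (chosen precisely so that elliptic regularity can be pushed up to order $2mk$), yields the companion top-order bound $\|v\|_{2mk,2,\Omega}\le C\|f\|_{0,2,\Omega}$ with no $|\lambda|$-prefactor.

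Since $k$ was chosen so that $2mk>n/2$, the Gagliardo--Nirenberg interpolation inequality on $\Omega$ with exponent $\theta=n/(4mk)\in(0,1)$ combines these two estimates into
\[
\|v\|_{L_\infty(\Omega)}\le C\|v\|_{2mk,2,\Omega}^{\theta}\|v\|_{0,2,\Omega}^{1-\theta}\le C|\lambda|^{n/(4m)-k}\|f\|_{0,2,\Omega}.
\]
Thus for each $x\in\bar\Omega$ the evaluation $f\mapsto v(x)$ is a continuous linear functional on $L_2(\Omega)$ of norm at most $C|\lambda|^{n/(4m)-k}$, and the Riesz representation theorem produces a unique $\tilde K(x,\cdot,\lambda)\in L_2(\Omega)$ with $v(x)=\int_\Omega \tilde K(x,y,\lambda)f(y)\,dy$ satisfying $\|\tilde K(x,\cdot,\lambda)\|_{0,2,\Omega}\le C|\lambda|^{n/(4m)-k}$; this is exactly~\eqref{E:4.4}. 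Integrating the square of this bound over $x\in\Omega$ shows $\tilde K\in L_2(\Omega\times\Omega)$, hence $\tilde R_2(\lambda)^k\in\mathscr S_2$.

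Finally, the continuity of $x\mapsto\tilde K(x,\cdot,\lambda)$ in $L_2(\Omega)$ follows from the Morrey embedding $W_2^{2mk}(\Omega)\hookrightarrow C^\alpha(\bar\Omega)$, valid for some $\alpha>0$ because $2mk>n/2$: together with the top-order estimate this gives $|v(x)-v(x')|\le C|x-x'|^\alpha\|f\|_{0,2,\Omega}$, and taking the supremum over $f$ in the unit ball of $L_2(\Omega)$ upgrades this to $\|\tilde K(x,\cdot,\lambda)-\tilde K(x',\cdot,\lambda)\|_{0,2,\Omega}\le C|x-x'|^\alpha$. I expect the main technical obstacle to be the iterated elliptic regularity producing the top-order bound $\|v\|_{2mk,2,\Omega}\le C\|f\|_{0,2,\Omega}$: the parameter-dependent estimate of Proposition~\ref{P:3.6} has to be bootstrapped $k$ times while simultaneously propagating the (perturbed) null boundary conditions of $\tilde A_{B,2}$ through each step and keeping the top Sobolev norm free of any $|\lambda|$-factor. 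Once that bound is established, the rest of the argument is just Gagliardo--Nirenberg interpolation plus Riesz representation.
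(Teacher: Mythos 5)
Your overall architecture (an $L_2\to L_\infty$ bound of size $|\lambda|^{n/(4m)-k}$ for $\tilde R_2(\lambda)^k$, then Riesz representation to produce the kernel and \eqref{E:4.4}) is the right endgame, and it is essentially how the cited Agmon-type lemma concludes. The genuine gap is the ingredient you yourself flag as the ``main technical obstacle'': the top-order bound $\|\tilde R_2(\lambda)^k f\|_{2mk,2,\Omega}\le C\|f\|_{0,2,\Omega}$ obtained by bootstrapping elliptic regularity $k$ times. That bootstrap is not available in this setting. The operator here is not $A_{B,2}$ but $\tilde A_{B,2}$, whose interior and boundary operators are perturbed by the nonlocal correction terms $Q_\Omega$ and $Q_{j,\Omega}$; these are built from oscillatory-integral symbols $\sigma^1(x,\xi)$, $\sigma_j^1(x,\xi)$ of finite smoothness in $x$ (their construction consumes exactly the derivatives granted by Assumption~\ref{A:3.1}) together with the fixed-order extension operator $E$, and the only mapping properties established for them are of order $2m$ (into $W_p^0$, resp. $W_p^{2m-m_j}$). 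No parameter-elliptic a priori estimate of order $2m+s$ with $s>0$ has been proved for this perturbed problem, and when $2m\le n$ you would need $s$ of size roughly $n/2$, i.e. regularity of order $2mk\sim n/2+2m$, for a boundary problem whose data (coefficients, $\Gamma$, and above all the nonsmooth pseudodifferential remainders) were calibrated only for the order-$2m$ theory of Section~\ref{S:3}. So the step ``$(\tilde A_{B,2}-\lambda)v_2=v_1\in W_2^{2m}$ implies $v_2\in W_2^{4m}$ with the right $\lambda$-uniform bound'' is precisely what cannot be asserted here; your argument would be fine for the classical smooth-coefficient differential operator, but not for $\tilde A_{B,2}$ under the paper's hypotheses.

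The paper (following \cite[Section~5]{ADF}) avoids higher-order regularity altogether: it iterates only the order-$2m$ resolvent estimate, but in a ladder of $L_{p_j}$ spaces $2=p_1<p_2<\dots<p_k$ with $p_k>n/(2m)$. Using the consistency $\tilde R_p(\lambda)=\tilde R_2(\lambda)|_{L_p(\Omega)}$ and the multiplicative embedding estimate $\|u\|_{0,p_{j+1},\Omega}\le C\|u\|_{2m,p_j,\Omega}^{\tau_j}\|u\|_{0,p_j,\Omega}^{1-\tau_j}$ with $\tau_j=\frac{n}{2m}(p_j^{-1}-p_{j+1}^{-1})$, each factor $S_j\tilde R_{p_j}(\lambda)$ gains integrability while contributing a factor $|\lambda|^{\tau_j-1}$ from \eqref{E:4.1}; the last factor lands in $W_{p_k}^{2m}(\Omega)$, which embeds into $C(\bar\Omega)$ since $p_k>n/(2m)$. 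Multiplying the factors reproduces the exponent $n/(4m)-k$ and one concludes with the Riesz-representation argument of \cite[Lemma~2.1]{A2}, which also yields the continuity of $x\mapsto \tilde K(x,\cdot,\lambda)$. If you want to salvage your route, you would have to prove higher-order parameter-elliptic estimates for the perturbed problem (including mapping properties of $Q_\Omega$, $Q_{j,\Omega}$ on higher Sobolev scales and compatibility with $E$), which is substantially more than Assumption~\ref{A:3.1} supports; the $L_p$-ladder is the device that makes only $W^{2m}$-regularity necessary.
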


\begin{proof}
  To begin with, let us mention that the proposition has been proved in \cite[Lemma~2.1, Theorem~5.1, and equation (7.7)]{A2} for the case $2m>n$ and in \cite[Section~5]{ADF} otherwise. However since we wish to refer to the proof of this proposition in the sequel, we shall give a brief outline of the proof given in \cite{ADF}. Accordingly, we note from \eqref{E:4.3} that for   $\lambda\in\mathcal L_\theta$ with $|\lambda|\ge\lambda^0$, $\tilde R_2(\lambda)^k$ is a Hilbert-Schmidt operator on $L_2(\Omega)$ and its kernel is denoted by $\tilde K(x,y,\lambda)$.
We suppose henceforth that $\lambda\in\mathcal L_\theta$ with $|\lambda|\ge\lambda^0$. Then in order to prove the cited assertions, the following facts will be used: (1) if $2<p<\infty$, then $\tilde R_p(\lambda) = \tilde R_2(\lambda)|_{L_p(\Omega)}$, and (2) if $1<p<p_1$, $s\in\N$, and $0<\tau < \frac ns(p^{-1}-p_1^{-1})<1$, then the embedding $W_p^s(\Omega)\to L_{p_1}(\Omega)$ is continuous and for $u\in W_p^s(\Omega)$ we have the estimate $\| u\|_{0,p_1,\Omega}\le C_0 \| u\|_{s,p,\Omega}^\tau \| u\|_{0,p,\Omega}^{1-\tau}$, where the constant $C_0$ does not depend upon $u$. With these facts in mind, let us choose the numbers $\{p_j\}_{j=1}^k$ so that
$ 2=p_1<p_2<\ldots<p_k$, where $p_k>\tfrac{n}{2m}$ and $0<\tau_j = \tfrac{n}{2m}(\tfrac1{p_j} -\tfrac1{p_{j+1}})<1$ for $j=1,\dots,k-1$.

Then we can write $\tilde R_2(\lambda)$, considered  as a mapping from $L_2(\Omega)$ into $W_2^{2m}(\Omega)$, as a product of operators $S_j \tilde R_{p_j}(\lambda)$, $j=1,\dots,k-1$, where $S_j$ is the embedding operator cited above mapping $W_p^{2m}(\Omega)$ into $L_{p_j}(\Omega)$:
\begin{align*}
 L_{p_1}(\Omega)\overset{S_1\tilde R_{p_1}(\lambda)}\longrightarrow & L_{p_2}(\Omega)\overset{S_2\tilde R_{p_2}(\lambda)}\longrightarrow \ldots \\
 &\ldots \longrightarrow
L_{p_{k-1}}(\Omega)\overset{S_{k-1}\tilde R_{p_{k-1}}(\lambda)}\longrightarrow
L_{p_{k }}(\Omega)\overset{\tilde R_{p_{k}}(\lambda)}\longrightarrow W_{p_k}^{2m}(\Omega).
\end{align*}
It follows immediately from the embedding estimate cited above and \eqref{E:4.1} that
\[ \|\tilde R_2(\lambda)\|_{L_2(\Omega)\to W_p^{2m}(\Omega)} \le C|\lambda|^{\frac{n}{2m}-k},\]
where the constant $C$ does not depend upon $\lambda$, and hence we can argue as in \cite[Lemma~2.1]{A2} to establish all the assertions of the proposition.
\end{proof}

\begin{proposition}
  \label{P:4.8}
  For $\lambda\in\mathcal L_\theta$ with $|\lambda|\ge\lambda^0$, $\tilde R_2(\lambda)^q$ is an operator of trace class and
  \[ \big| \tr \tilde R_2(\lambda)^q\big| \le C|\lambda|^{\frac{n}{2m}-q},\]
  where the constant $C$ does not depend upon $\lambda$.
\end{proposition}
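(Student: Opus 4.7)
The plan is to reduce Proposition~\ref{P:4.8} to Proposition~\ref{P:4.7} by factoring $\tilde R_2(\lambda)^q$ as a product of two Hilbert--Schmidt operators and invoking the standard Schatten-norm inequality $|AB|_1 \le |A|_2 |B|_2$. Note that with $q$ the smallest even integer larger than $n/(2m)$ in the case $2m\le n$, and $q=2$ (so that $k=q/2=1$) in the case $2m>n$, we have $q=2k$ throughout, so
\[ \tilde R_2(\lambda)^q = \tilde R_2(\lambda)^k \cdot \tilde R_2(\lambda)^k. \]
Both factors are Hilbert--Schmidt by Proposition~\ref{P:4.7}, so their product belongs to $\mathscr S_1$.

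First I would convert the pointwise-in-$x$ estimate \eqref{E:4.4} into an estimate on the Hilbert--Schmidt norm $|\tilde R_2(\lambda)^k|_2$. Squaring \eqref{E:4.4} and integrating over $x\in\Omega$ gives
\[ |\tilde R_2(\lambda)^k|_2^2 = \int_\Omega\!\int_\Omega |\tilde K(x,y,\lambda)|^2 \, dy\, dx \le C_k^2\, \mathop{\mathrm{meas}}(\Omega)\, |\lambda|^{\frac{n}{2m}-2k},\]
so that $|\tilde R_2(\lambda)^k|_2 \le C\,|\lambda|^{\frac{n}{4m}-k}$ for $\lambda\in\mathcal L_\theta$ with $|\lambda|\ge \lambda^0$, where $C$ does not depend on $\lambda$.

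Next I would apply the Schatten inequality $|AB|_1\le |A|_2 |B|_2$ with $A=B=\tilde R_2(\lambda)^k$, obtaining
\[ |\tilde R_2(\lambda)^q|_1 \le |\tilde R_2(\lambda)^k|_2^2 \le C^2 |\lambda|^{\frac{n}{2m}-2k} = C^2 |\lambda|^{\frac{n}{2m}-q}.\]
In particular $\tilde R_2(\lambda)^q$ lies in $\mathscr S_1$, so its trace is well defined and absolutely convergent. The required bound then follows from the standard estimate
\[ \bigl|\tr \tilde R_2(\lambda)^q\bigr| \le |\tilde R_2(\lambda)^q|_1 \le C^2 |\lambda|^{\frac{n}{2m}-q}.\]

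I do not anticipate a serious obstacle here: Proposition~\ref{P:4.7} already carries all the analytic content (the parameter-dependent bound on the iterated-resolvent kernel), and the remaining work is purely a matter of Schatten-class bookkeeping. The only minor subtlety to keep clear is the definition of $q$ across the two cases $2m>n$ and $2m\le n$, but in both cases the relation $q=2k$ makes the factorization $\tilde R_2(\lambda)^q = \tilde R_2(\lambda)^k \cdot \tilde R_2(\lambda)^k$ valid, and the exponent $\tfrac{n}{2m}-q$ emerges directly from squaring the exponent $\tfrac{n}{4m}-k$ in \eqref{E:4.4}.
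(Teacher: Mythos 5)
Your argument is correct and is essentially the paper's own proof: the paper likewise writes $\tilde R_2(\lambda)^q=\tilde R_2(\lambda)^k\tilde R_2(\lambda)^k$ and then appeals to Proposition~\ref{P:4.7} together with standard facts about Hilbert--Schmidt and trace-class operators (cited there from Agmon's lectures), which is exactly the Schatten bookkeeping $|AB|_1\le|A|_2|B|_2$, $|\tr T|\le|T|_1$ that you carry out explicitly after integrating the kernel bound \eqref{E:4.4} over $x\in\Omega$. Your reading that $q=2k$ in both cases matches the paper's implicit convention, so no gap remains.
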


\begin{proof}
  If we observe that $\tilde R_2(\lambda)^q = \tilde R_2(\lambda)^k \tilde R_2(\lambda)^k$, then the assertion of the proposition is a immediate consequence of Proposition~\ref{P:4.7} and \cite[Theorems 2.12, 2.18, and 2.19]{A3}.
\end{proof}

We now present a sharpening of Proposition~\ref{P:4.8}.

\begin{proposition}
  \label{P:4.9}
  It is the case that
  \[ \tr \tilde R_2(\lambda) ^q = c_q (-\lambda)^{\frac{n}{2m}-q} + o \big( |\lambda|^{\frac{n}{2m}-q}\big) \text{ uniformly in }\mathcal L_\theta \text{ as }|\lambda|\to\infty,\]
  where $c_q = \int_\Omega c_q(x)dx$, $c_q(x) = (2\pi)^{-n} \int_{\R^n} \frac{d\xi}{(\mathring A(x,\xi)+1)^q}$, and where we assign to $\arg(-\lambda)$ its value in $[-\pi+\theta,\pi-\theta]$.
\end{proposition}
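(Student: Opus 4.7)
The plan is to reduce the asymptotic to one for $R_2(\lambda)^q$ associated with the unperturbed operator $A_{B,2}$, and then extract the leading term by freezing coefficients and using the explicit Fourier representation on $\R^n$. By Proposition~\ref{P:4.2} and Theorem~\ref{T:4.3} it suffices to treat $\tilde A_{B,2}$. First I would write, via the telescoping identity,
\[ \tilde R_2(\lambda)^q - R_2(\lambda)^q = \sum_{j=0}^{q-1} R_2(\lambda)^j \bigl(\tilde R_2(\lambda) - R_2(\lambda)\bigr) \tilde R_2(\lambda)^{q-1-j}, \]
and combine the second resolvent identity with the bound on $\tilde{\mathcal P} R(\lambda)$ established in the proof of Proposition~\ref{P:3.6} to obtain $\|\tilde R_2(\lambda) - R_2(\lambda)\|_{L_2\to L_2} \le C|\lambda|^{-1-1/(2m)}$. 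Inserting this into the telescoped sum and using the $\mathscr S_{n(1+\epsilon)/(2m)}$-membership from \eqref{E:4.3} together with the H\"older inequality for the Schatten norms yields
\[ \bigl|\tr\bigl(\tilde R_2(\lambda)^q - R_2(\lambda)^q\bigr)\bigr| = o\bigl(|\lambda|^{\frac{n}{2m}-q}\bigr) \]
uniformly in $\mathcal L_\theta$, so it remains to establish the asserted expansion for $\tr R_2(\lambda)^q$.

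Second, I would construct a local parametrix for $R_2(\lambda)$ using a partition of unity $\{\phi_\nu\}$ on $\bar\Omega$ subordinate to a sufficiently fine cover, freezing the coefficients of $A(x,D)$ at a representative point $x_\nu$ of each patch. In interior patches the constant-coefficient operator $\mathring A(x_\nu,D)-\lambda$ is inverted explicitly as a Fourier multiplier, and the kernel $K_\nu(x,y,\lambda)$ of its $q$-th power satisfies
\[ K_\nu(x,x,\lambda) = (2\pi)^{-n}\int_{\R^n}\bigl(\mathring A(x_\nu,\xi)-\lambda\bigr)^{-q}d\xi. \]
For boundary patches one straightens the boundary and uses the parameter-elliptic half-space problem; the associated Poisson-type operator contributes only lower-order terms to the trace, which is the technical content of \cite[Lemma~2.1, Theorem~5.1]{A2} and \cite[Section~5]{ADF}. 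Substituting $\xi = |\lambda|^{1/(2m)}\eta$ and exploiting the $2m$-homogeneity of $\mathring A(x,\cdot)$ transforms the diagonal kernel into
\[ (-\lambda)^{\frac{n}{2m}-q}(2\pi)^{-n}\int_{\R^n}\bigl(\mathring A(x_\nu,\eta)+1\bigr)^{-q}d\eta = (-\lambda)^{\frac{n}{2m}-q} c_q(x_\nu), \]
with the branch of $(-\lambda)$ as prescribed, and summation of the patches followed by refinement of the partition produces the integral $c_q = \int_\Omega c_q(x)\,dx$ in the leading term.

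The remaining and principal task is error control, uniformly in $\mathcal L_\theta$ with $|\lambda|\ge \lambda^0$. Four sources of error must be handled: the replacement of variable coefficients by frozen ones within each patch, the commutators $[\phi_\nu, R_2(\lambda)]$ across the partition, the boundary parametrix remainders, and the error from iterating the parametrix $q$ times. I would estimate each of these in the Hilbert-Schmidt norm via the Schatten-class factorization $\tilde R_2(\lambda)^q = \tilde R_2(\lambda)^k \tilde R_2(\lambda)^k$ together with Proposition~\ref{P:4.7} and the embedding/interpolation arguments used in its proof, so that every error contributes $o(|\lambda|^{n/(2m)-q})$.

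The main obstacle I expect is precisely this uniform error control in the closed sector $\mathcal L_\theta$, and in particular the treatment of the boundary patches, since there one cannot simply invoke the whole-space Fourier representation. The resolution is to import the detailed half-space analysis of \cite{A2} and \cite[Section~5]{ADF}, which uses the parameter-ellipticity of the boundary problem along every ray in $\mathcal L_\theta$ to show that the Poisson kernel of the frozen problem has diagonal asymptotics of order strictly smaller than $|\lambda|^{n/(2m)-q}$; Proposition~\ref{P:4.7} is the bridge that allows this classical analysis to be deployed in the present setting.
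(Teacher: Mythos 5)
Your overall reduction --- compare $\tr \tilde R_2(\lambda)^q$ with $\tr R_2(\lambda)^q$ and then invoke the asymptotics of the unperturbed problem --- is exactly the paper's strategy, but your execution of the comparison step has a genuine gap. The bound $\|\tilde R_2(\lambda)-R_2(\lambda)\|_{L_2\to L_2}\le C|\lambda|^{-1-1/(2m)}$ is correct, though it must be derived from Proposition~\ref{P:4.1} rather than the literal second resolvent identity, since $A_{B,2}$ and $\tilde A_{B,2}$ have different domains (different boundary conditions). The real problem is the trace estimate: \eqref{E:4.3} is purely qualitative, giving Schatten membership with no $\lambda$-dependence of the Schatten norms, so no quantitative $o\big(|\lambda|^{\frac{n}{2m}-q}\big)$ bound can come out of it; moreover, in the telescoped term $R_2(\lambda)^{j}\bigl(\tilde R_2(\lambda)-R_2(\lambda)\bigr)\tilde R_2(\lambda)^{q-1-j}$ only $q-1$ factors are covered by \eqref{E:4.3}, and since $q-1$ may be $\le n/(2m)$ (e.g. $n=3$, $2m=2$, $q=2$), H\"older for Schatten norms combined with a mere operator-norm bound on the middle factor does not even yield trace class, let alone the stated decay. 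One needs $\lambda$-quantitative smoothing of the difference itself. The paper handles this by grouping the difference as sums of $R_2(\lambda)^{q_1}\bigl(\tilde R_2(\lambda)-R_2(\lambda)\bigr)\tilde R_2(\lambda)^{q_2}$ with $q_1+q_2=k-1$, multiplied by $R_2(\lambda)^k$ or $\tilde R_2(\lambda)^k$, and showing via Proposition~\ref{P:4.1} and \cite[Section~5]{ADF} that each factor is Hilbert--Schmidt with kernel bounds as in Proposition~\ref{P:4.7}, the difference factor carrying an extra $|\lambda|^{-1/(2m)}$; the trace of each product of two Hilbert--Schmidt operators is then estimated as in Proposition~\ref{P:4.8}. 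Your argument can be repaired along these lines, but as written the key quantitative step is unsupported.

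For the main term the routes genuinely differ: the paper constructs no parametrix at all --- it observes that Propositions~\ref{P:4.7} and \ref{P:4.8} hold with $\tilde R_2(\lambda)$ replaced by $R_2(\lambda)$ and then quotes \cite[Theorem~5.1]{ADF} for $\tr R_2(\lambda)^q = c_q(-\lambda)^{\frac{n}{2m}-q}+o\big(|\lambda|^{\frac{n}{2m}-q}\big)$ --- whereas you sketch a freezing-of-coefficients derivation whose boundary-patch and error analysis you in turn defer to \cite{A2} and \cite[Section~5]{ADF}, i.e.\ ultimately to the same sources, at considerably greater expense. Note also that for non-real $\lambda$ and a complex-valued symbol the substitution $\xi=|\lambda|^{1/(2m)}\eta$ produces $\bigl(\mathring A(x_\nu,\eta)-\lambda/|\lambda|\bigr)^{-q}$, and passing from this to $c_q(x_\nu)(-\lambda)^{\frac{n}{2m}-q}$ with the branch of $\arg(-\lambda)$ as prescribed requires the homogeneity/analytic-continuation argument behind Remark~\ref{R:4.10}, not merely the rescaling.
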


\begin{proof}
  Supposing henceforth that $\lambda\in\mathcal L_\theta$ with $|\lambda|\ge\lambda^0$, we know from \eqref{E:4.1} and \cite[Section~5]{ADF} that $R_2(\lambda)^k$ is also a Hilbert-Schmidt operator, and if we let $K(x,y,\lambda)$ denote its associated kernel, then all the assertions of Proposition~\ref{P:4.7} hold in full force with $\tilde R_2(\lambda)^k$ and $\tilde K(x,y,\lambda)$ replaced by $R_2(\lambda)^k$ and $K(x,y,\lambda)$, respectively.
  Consequently Proposition~\ref{P:4.8} holds in full force with $\tilde R_2(\lambda)$ replaced by $R_2(\lambda)$.

  We are now going to obtain an estimate for $\tr (\tilde R_2(\lambda)^q - R_2(\lambda)^q)$. To this end let us observe that with $q_1,q_2\in\N_0$,
  \begin{align*}
    \tilde R_2(\lambda)^q-R_2(\lambda)^q & = \Big( \sum_{q_1+q_2=k-1} R_2(\lambda)^{q_1}(\tilde R_2(\lambda) -R_2(\lambda) )\tilde R_2(\lambda)^{q_2}\Big)\tilde R_2(\lambda)^k \\
    & + R_2(\lambda)^k \Big( \sum_{q_1+q_2=k-1} R_2(\lambda)^{q_1} (\tilde R_2(\lambda)-R_2(\lambda))\tilde R_2(\lambda)^{q_2}\Big).
  \end{align*}
  Hence if we fix our attention upon a fixed pair $q_1,q_2$ and appeal to Proposition~\ref{P:4.1} and \cite[Section~5]{ADF}, then we can argue as we did in the proof of Proposition~\ref{P:4.7} to show that $R_2(\lambda)^{q_1}(\tilde R_2(\lambda)-R_2(\lambda))\tilde R_2(\lambda)^{q_2}$ is a Hilbert-Schmidt operator, and if we let $K^\dagger (x,y,\lambda)$ denote its associated kernel, then all the assertions of Proposition~\ref{P:4.7} with $\tilde R_2(\lambda)$, $\tilde K(x,y,\lambda)$, and $C_k |\lambda|^{\frac{n}{2m}-k}$ replaced by $R_2(\lambda)^{q_1}(\tilde R_2(\lambda)-R_2(\lambda))\tilde R_2(\lambda)^{q_2}$, $K^\dagger(x,y,\lambda)$, and $C_k |\lambda|^{-1/(2m)}|\lambda|^{n/(4m) - k}$, respectively. In light of this fact we can appeal to \cite[Section~5]{ADF} and argue as we did in the proof of Proposition~\ref{P:4.8} to show that
  \[ \tr\big( \tilde R_2(\lambda)^q - R_2(\lambda)^q\big) \le C |\lambda|^{-\frac1{2m}}|\lambda|^{\frac{n}{2m}-q},\]
  where the constant $C$ does not depend upon $\lambda$. Since $\tr \tilde R_2(\lambda)^q = \tr (R_2(\lambda)^q) + \tr (\tilde R_2(\lambda)^q-R_2(\lambda)^q)$, the assertion of the proposition is an immediate consequence of the forgoing results and \cite[Theorem~5.1]{ADF}.
\end{proof}

\begin{remark}
  \label{R:4.10}
  Referring to Proposition~\ref{P:4.9} we note from \cite{AM} and \cite[pp.~110-111]{S} that $c_q$ can also be written in the form
  \[ c_q = \frac{1}{n(2\pi)^n} b_{\frac{n}{2m}, q} \int_{\Omega} dx \int_{|\eta|=1} \mathring A(x,\eta)^{-\frac{n}{2m}} d\eta,\]
  as well as in the form
  \[ c_q = \frac{1}{(2\pi)^n} b_{\frac{n}{2m},q} \int_\Omega dx\int_{\mathring A(x,\xi)<1} d\xi,\]
  where $b_{\frac{n}{2m},q} = \frac{n}{2m} B(\frac{n}{2m}, q-\frac{n}{2m})$ and $B(\cdot,\cdot)$ denotes the Beta function.
\end{remark}

\begin{proof}
  [Proof of Theorem~\ref{T:4.6}]
  As stated above we need only prove the theorem with $A_{B,p}^\chi$ replaced by $\tilde A_{B,2}$. But the proof for $\tilde A_{B,2}$ follows immediately from Proposition~\ref{P:4.9}, Remark~\ref{R:4.10}, and the arguments used in the proof of \cite[Theorem~6.3]{ADF}.
\end{proof}

\end{document}